\theoremstyle{definition}
\theoremstyle{plain}
\newtheorem{theorem}{Theorem}[section]
\newtheorem{lemma}{Lemma}[section]
\numberwithin{equation}{section}
\newcommand{\vs}{\vspace}
\begin{document}

\title{Multi-bump solutions for the nonlinear magnetic
Schr\"odinger equation with logarithmic nonlinearity\footnote{  This work was partially supported by NNSFC (No. 11671407 and No. 11801076), FDCT (No. 0091/2018/A3), Guangdong Special Support Program (No. 8-2015) and the key project of NSF of Guangdong Province (No. 2016A030311004).}}

\author{ Jun Wang$^{a}$, Zhaoyang Yin$^{a, b}$\footnote {Corresponding author. wangj937@mail2.sysu.edu.cn (J. Wang), mcsyzy@mail.sysu.edu.cn (Z. Yin)
} \\
{\small $^{a}$Department of Mathematics, Sun Yat-sen University, Guangzhou, 510275, China } \\
{\small $^{b}$School of Science, Shenzhen Campus of Sun Yat-sen University, Shenzhen, 518107, China } \\
}

	\date{}

	\maketitle

\date{}

 \maketitle \vs{-.7cm}

  \begin{abstract}
In this paper, we
study the  following nonlinear magnetic
Schr\"odinger equation with logarithmic nonlinearity
\begin{equation*}
-(\nabla+iA(x))^2u+\lambda V(x)u =|u|^{q-2}u+u\log |u|^2,\ u\in H^1(\mathbb{R}^N,\mathbb{C}),
\end{equation*}
where the magnetic potential $A \in L_{l o c}^2\left(\mathbb{R}^N, \mathbb{R}^N\right)$, $2<q<2^*,\  \lambda>0$ is a parameter and the nonnegative continuous function $V: \mathbb{R}^N \rightarrow \mathbb{R}$ has the deepening potential well. Using the variational methods, we obtain that the equation has at least $2^k-1$ multi-bump solutions when $\lambda>0$ is large enough.
\end{abstract}

{\footnotesize {\bf   Keywords:} Logarithmic Sch\"odinger equations; Multiple solutions;  Variational methods.

{\bf 2010 MSC:}  35A15, 35B38, 35J50.
}

\section{ Introduction and main results}

This paper studies the existence of multi-bump solutions for the  following nonlinear magnetic
Schr\"odinger equation with logarithmic nonlinearity
\begin{equation} \label{eq1.1}
-(\nabla+iA(x))^2u+\lambda V(x)u =|u|^{q-2}u+u\log |u|^2,\ u\in H^1(\mathbb{R}^N,\mathbb{C}),
\end{equation}
where the magnetic potential $A \in L_{l o c}^2(\mathbb{R}^N, \mathbb{R}^N),\ 2<q< 2^*=\frac{2N}{N-2}$, $\lambda>0$ is a parameter, $i$ is the imaginary unit and $V: \mathbb{R}^N \rightarrow \mathbb{R}$ is the nonnegative continuous function.

Logarithmic Schr\"odinger equation has wide applications in quantum mechanics, quantum optics, nuclear physics, transport and diffusion phenomena, open quantum system, Bose-Einstein condensations, see \cite{{AVKG2011},{IBBJM1976},{EFH1985},{GL2008},{KY1978},{KGZ2010}}. It takes the following form
\begin{equation*}
 \left\{\aligned
& \partial_t v(t, x)=i \Delta v(t, x)+i \lambda v(t, x) \log \left(|v(t, x)|^2\right)+iW\left(t, x,|v|^2\right) v(t, x),\ x \in \mathbb{R}^d,\ t>0, \\
& v(0, x) =v_0(x),\ x\in \mathbb{R}^d,
\endaligned
\right.
\end{equation*}
where $\Delta$ is the Laplacian operator on $\mathcal{M} \subset \mathbb{R}^d$, $\mathcal{M}$ is either a bounded domain with homogeneous Dirichlet boundary condition or $\mathbb{R}^d$, $t$ is time, $x$ is spatial coordinate, $\lambda \in \mathbb{R} \setminus\{0\}$ characterizes the force of nonlinear interaction, and $W$ is a real-valued function. It is worth mentioning that, the logarithmic Schr\"odinger equation was introduced by Mycielski and Bialynicki-Birula \cite{IBBJM1976}.  They proved that it is the only nonlinear theory that holds the separability of non interacting systems: for non interacting subsystems, the nonlinear term does not introduce correlation. In present paper, our goal is to find the existence of standing wave solutions for the  logarithmic Schr\"odinger equation.

The idea of this paper is derived from the following  nonlinear Schr\"odinger equations with deepening potential well
\begin{equation}\label{eq1.2}
 \left\{\aligned
&-\Delta v+(\lambda V(y)+P(y)) v=v^p, & y\in\mathbb{R}^N, \\
& v(y)>0, & y\in\mathbb{R}^N.
\endaligned
\right.
\end{equation}
One of the key lies in the assumption that the first eigenvalue of $-\Delta+P(x)$ on $\Omega_j$ under the Dirichlet boundary condition is positive, where $j \in\{1,2, \ldots, k\},\ p \in\left(1, \frac{N+2}{N-2}\right)$ and $N \geq 3$. In \cite{YHDKT2003}, Ding and Tanaka showed that the problem \eqref{eq1.2} has at least $2^k-1$ multi-bump solutions for $\lambda>0$ large enough. Recently,  Tanaka and Zhang in \cite{KTZC2017} considered the following Schr\"odinger equation with logarithmic nonlinearity
$$
\left\{\begin{array}{l}
-\Delta v+W(x) v=P(y) v \log v^2 \quad \text { in } \mathbb{R}^N, \\
v \in H^1(\mathbb{R}^N),
\end{array}\right.
$$
where $N \geq1$ and $W, P \in C^1(\mathbb{R}^N, \mathbb{R})$ satisfy
\begin{itemize}
\item[$(A_1)$] $W(y), P(y)>0$ for all $y \in \mathbb{R}^N$;

\item[$(A_2)$] $W(y), P(y)$ are $1$-periodic in each $y_i(i=1,2, \cdots, N)$, that is,
$$
\begin{aligned}
& W\left(y_1, \cdots, y_i+1, \cdots, y_N\right)=W\left(y_1, \cdots, y_i, \cdots, y_N\right), \\
& P\left(y_1, \cdots, y_i+1, \cdots, y_N\right)=P\left(y_1, \cdots, y_i, \cdots, y_N\right)
\end{aligned}
$$
for all $y=\left(y_1, y_2, \cdots, y_N\right) \in \mathbb{R}^N$ and $i=1, \cdots, N$.
\end{itemize}
They adopted a method of using the periodic problem of $2L$ and obtained the existence of infinitely many multi-bump solutions,  which are different under the action of $\mathbb{Z}^N$. After that, Alves and Ji in \cite{COACJ2022} studied the existence of multi-bump positive solutions for the following Schr\"odinger equation with logarithmic nonlinearity
\begin{equation*}
\left\{\begin{array}{l}
-\Delta u+\lambda V(x) u=u \log u^2 \quad \text { in } \mathbb{R}^N \\
u \in H^1(\mathbb{R}^N).
\end{array}\right.
\end{equation*}
Note that, they did not assume that condition $(a2)$ in \cite{YHDKT2003} holds. Hence, they needed to modify the nonlinearity in a special way to prove the Palais-Smale condition. It is then quite natural to ask, does \eqref{eq1.1} have the type of multi-bump solutions? Note that, to make the research more interesting, we also study magnetic potential. To our knowledge, this paper is the first to study the existence of multi-bump solutions to the logarithmic Schr\"odinger equation with magnetic potential by using the variational method.

Inspired by \cite{{COACJ2022},{YHDKT2003},{COAIS2023}}, the main purpose of the present paper is to investigate the existence of multi-bump positive solutions. The key to this article is to analyze the interaction between power terms and logarithmic terms. The functional associated with the \eqref{eq3.1} lost some other good properties, so we need to develop a new method to prove the boundedness of the (PS) sequence. In fact, due to the particularity of the equation in \cite{COACJ2022}, they developed a relatively simple method to obtain boundedness of the (PS) sequence, but this method is not applicable to this paper. Moreover, note that the nonlinear term $t\log|t|^2+|u|^q\neq0$ as $t\rightarrow0$, so we cannot apply directly del Pino and Felmer's method in \cite{MDP1996}. In order to get the compactness, we need to modify the penalization in \cite{COACJ2022}.

Our assumptions on $V$ are:
\begin{itemize}
\item[$(V_1)$] $V \in C(\mathbb{R}^N, \mathbb{R})$ and $V(x) \geq 0$.

\item[$(V_2)$] $\Omega:=\operatorname{int} V^{-1}(0)$ is a non-empty bounded open subset with smooth boundary and $ \overline{\Omega}=V^{-1}(0)$, where int $V^{-1}(0)$ denotes the set of the interior points of $V^{-1}(0)$.

\item[$(V_3)$] $\Omega$ consists of $k$ components:
$$
\Omega=\Omega_1 \cup \Omega_2 \cup \cdots \cup \Omega_k
$$
and $\overline{\Omega}_i \cap \overline{\Omega}_j=\emptyset$ for all $i \neq j$.
\end{itemize}

The main results of this paper are as follows.
\begin{theorem}\label{t1.1}
Assume that $N\geq1$ and $(V_1)-(V_3)$ hold. Then for any non-empty subset $\Gamma$ of $\{1,2, \ldots, k\}$, there exists $\lambda^*>0$ such that for all $\lambda \geqslant \lambda^*$, the problem \eqref{eq1.1} has a  nontrivial solution $u_\lambda$. Moreover, the family $\left\{u_\lambda\right\}_{\lambda \geqslant \lambda^*}$ has the following properties: for any sequence $\lambda_n \rightarrow \infty$, we can extract a subsequence $\lambda_{n_i}$ such that $u_{\lambda_{n_i}}$ converges strongly in $H_A^1(\mathbb{R}^N,\mathbb{C})$ to a function $u$ which satisfies $u(x)=0$ for $x \notin \Omega_{\Gamma}$ and the restriction $\left.u\right|_{\Omega_j}$ is a least energy solution of
$$
\begin{cases}-(\nabla+iA(x))^2u=|u|^{q-2}u+u \log |u|^2 & \text { in } \Omega_{\Gamma} \\ u=0 & \text { on } \partial \Omega_{\Gamma},\end{cases}
$$
where $\Omega_{\Gamma}=\bigcup\limits_{j \in \Gamma} \Omega_j$.
\end{theorem}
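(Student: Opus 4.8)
The plan is to follow the penalization scheme of del Pino--Felmer as adapted in \cite{COACJ2022,YHDKT2003}, but carried out in the magnetic Sobolev space and with a modification tailored to the logarithmic term. First I would fix the space $H_A^1(\R^N,\C)$ together with the $\lambda$-dependent norm $\|u\|_\lambda^2=\int_{\R^N}|(\nabla+iA)u|^2+\lambda V(x)|u|^2\,dx$, and record the diamagnetic inequality $|\nabla|u||\le|(\nabla+iA)u|$ a.e., which transfers all Sobolev embeddings and integral estimates for $|u|$ to the magnetic setting. Because the primitive of the logarithmic term, $\tfrac12|u|^2\log|u|^2-\tfrac12|u|^2$, makes the associated functional merely continuous and not $C^1$ on $H_A^1$, I would split $s\mapsto s^2\log s^2$ into a bounded convex-type piece supported near $s=0$ and a superquadratic piece, exactly as in \cite{COACJ2022}, so that the singular part is treated by lower semicontinuity and the regular part by standard differentiation. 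Simultaneously I would introduce a penalized nonlinearity $g_\lambda(x,t)$ that coincides with $|t|^{q-2}t+t\log|t|^2$ on a fixed neighborhood of $\overline\Omega$ and is truncated outside, producing a modified functional $\Phi_\lambda$ whose critical points I will later show solve \eqref{eq1.1}.

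The heart of the argument, and the step I expect to be hardest, is the boundedness of Palais--Smale sequences for $\Phi_\lambda$ --- precisely the point the authors flag as requiring a new method. Here the standard Ambrosetti--Rabinowitz device fails because the logarithmic nonlinearity is not superquadratic in the usual sense. My plan is to exploit an algebraic cancellation in $\Phi_\lambda(u_n)-\tfrac12\Phi_\lambda'(u_n)u_n$: a direct computation shows the logarithmic contribution cancels identically, leaving $(\tfrac12-\tfrac1q)\int|u_n|^q+\tfrac12\int|u_n|^2$ up to the penalization, so that a (PS) sequence is automatically bounded in $L^2\cap L^q$. Feeding these bounds back into $\Phi_\lambda'(u_n)u_n=o(\|u_n\|_\lambda)$, the only dangerous term is the positive part $\int_{|u_n|\ge1}|u_n|^2\log|u_n|^2$, which I would dominate by $C_\varepsilon\int|u_n|^{2+\varepsilon}$ and then, via the Gagliardo--Nirenberg inequality together with the already-established $L^2$ bound and the diamagnetic inequality, by a quantity of the form $C+C\|u_n\|_\lambda^{\theta(2+\varepsilon)}$ with $\theta(2+\varepsilon)<2$ for $\varepsilon$ small. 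The superquadratic exponent being strictly below $2$ lets me absorb this term into $\|u_n\|_\lambda^2$ and conclude boundedness.

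With boundedness in hand I would establish the (PS) condition for $\Phi_\lambda$ below a suitable level, using the penalization to kill mass escaping to infinity and, crucially, the deepening-well structure: as $\lambda\to\infty$ the term $\lambda\int V|u|^2$ forces weak limits to vanish outside $\overline\Omega$. For each non-empty $\Gamma\subseteq\{1,\dots,k\}$ I would set up a minimax value $c_{\lambda,\Gamma}$ modeled on the least-energy level $c_\Gamma$ of the limit Dirichlet problem on $\Omega_\Gamma$, localized by an auxiliary device that distributes energy among the components $\Omega_j$, $j\in\Gamma$. This produces a critical point $u_\lambda$ at a level that stays, for $\lambda$ large, in a narrow band around $c_\Gamma$ and inside a region of the space concentrated on $\Omega_\Gamma$; the $2^k-1$ choices of $\Gamma$ then yield the claimed multiplicity.

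Finally I would remove the penalization by showing that, for $\lambda$ large, the solution is so small on the penalized region that $g_\lambda(x,u_\lambda)=|u_\lambda|^{q-2}u_\lambda+u_\lambda\log|u_\lambda|^2$ there --- a decay/comparison estimate driven again by the large factor $\lambda V$ --- so that $u_\lambda$ actually solves \eqref{eq1.1}. For the asymptotics I would take $\lambda_n\to\infty$, use the uniform bounds to extract a weakly convergent subsequence, upgrade to strong convergence in $H_A^1(\R^N,\C)$ by comparing energy with $c_\Gamma$, identify the limit as supported in $\overline{\Omega_\Gamma}$ with vanishing trace on $\partial\Omega_\Gamma$, and verify through the energy identity that its restriction to each $\Omega_j$ is a least-energy solution of the limit problem. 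The main technical risk throughout is keeping the logarithmic estimates uniform in $\lambda$, which the cancellation identity above is designed to guarantee.
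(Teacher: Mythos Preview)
Your proposal is correct and follows the same penalization-plus-minimax architecture as the paper, but two technical choices differ and are worth flagging.

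First, for boundedness of Palais--Smale sequences you plan to use the pointwise bound $s^2\log s^2 \le C_\varepsilon s^{2+\varepsilon}$ on $\{|u_n|\ge 1\}$ together with Gagliardo--Nirenberg and the $L^2$ control coming from the cancellation in $\Phi_\lambda(u_n)-\tfrac12\Phi_\lambda'(u_n)u_n$. The paper exploits the same cancellation (their Lemma~3.2, equation~(3.5)) but then invokes the del~Pino--Dolbeault logarithmic Sobolev inequality (their~(3.3)) to estimate $\int |v_n|^2\log|v_n|^2$ directly in terms of $\|v_n\|_2$ and $\|v_n\|_{2^*}$, reaching the same sub-quadratic bound $C(1+\|v_n\|_\lambda^{1+r})$ with $r\in(0,1)$. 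Your route is more elementary and avoids quoting the sharp log-Sobolev inequality; theirs is a one-line application once~(3.3) is in hand. One caution: with the penalization in place the cancellation only yields $L^2$ control on $\Omega_\Gamma'$, not on all of $\mathbb{R}^N$ --- the paper handles the complement by checking that the extra term in~(3.5) is nonnegative, and you will need the same observation.

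Second, you propose to work on $\mathbb{R}^N$ throughout and treat the convex piece $F_1$ via lower semicontinuity, i.e.\ Szulkin's nonsmooth critical-point theory. The paper sidesteps this entirely: it first solves the penalized problem on balls $B_R(0)$ (their~(3.1)), where $F_1$ has at most quadratic growth and the domain is bounded, so the modified functional $I_{\lambda,R}$ is genuinely $C^1$ and classical mountain-pass and deformation arguments apply. Only afterwards do they pass $R\to\infty$ (Lemmas~3.6, 6.1, 6.2) to recover a solution on $\mathbb{R}^N$. This buys them the comfort of the $C^1$ category at the cost of one extra limiting layer; your direct approach is cleaner provided you are prepared to carry Szulkin's framework through the deformation argument.

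The remaining ingredients --- the diamagnetic inequality, the $F_1/F_2$ splitting of the logarithm, the component-wise minimax (their $\gamma_0$, $\Gamma_*$, $b_{\lambda,R,\Gamma}$ and the Miranda-theorem step), the Kato-inequality/Moser-iteration $L^\infty$ estimate to undo the penalization, and the $(PS)_\infty$ analysis identifying the limit as a least-energy solution on $\Omega_\Gamma$ --- match the paper's Sections~2--6 essentially step for step.
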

\begin{theorem}\label{t1.2}
Assume that $N\geq1$ and $(V_1)-(V_3)$ hold, there exists $\lambda_*>0$ such that for all $\lambda \geqslant \lambda_*$, the problem \eqref{eq1.1}  has at least $2^k-1$ solutions.
\end{theorem}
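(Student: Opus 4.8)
The plan is to deduce Theorem \ref{t1.2} from Theorem \ref{t1.1} by a counting argument: since $\{1,2,\dots,k\}$ has exactly $2^k-1$ non-empty subsets, it suffices to produce, for each non-empty $\Ga\subseteq\{1,\dots,k\}$, the solution $u_\la^\Ga$ furnished by Theorem \ref{t1.1}, and then to show that these solutions are pairwise distinct once $\la$ is large. First I would set $\la_*:=\max_\Ga\la^*(\Ga)$, the maximum being taken over the finitely many non-empty subsets $\Ga$, where $\la^*(\Ga)$ is the threshold supplied by Theorem \ref{t1.1}; for $\la\ge\la_*$ every subset $\Ga$ yields a nontrivial solution $u_\la^\Ga$ of \eqref{eq1.1}.

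The key device is a localization functional detecting on which components a solution carries mass. For each $j\in\{1,\dots,k\}$ I would set $\mathfrak m_j(u):=\int_{\Om_j}|u|^2\,dx$. The decisive consequence of Theorem \ref{t1.1} is the following dichotomy: for any sequence $\la_n\to\infty$ one can, after passing to a subsequence, assume $u_{\la_n}^\Ga\to u^\Ga$ strongly in $H_A^1(\R^N,\C)$, where $u^\Ga=0$ outside $\Om_\Ga$ and $u^\Ga|_{\Om_j}$ is a nontrivial least energy solution for each $j\in\Ga$. Since strong $H_A^1$-convergence forces $L^2$-convergence on the bounded set $\Om_j$, it follows that $\mathfrak m_j(u_{\la_n}^\Ga)\to\mathfrak m_j(u^\Ga)$, and the limit satisfies $\mathfrak m_j(u^\Ga)>0$ precisely when $j\in\Ga$ (because a least energy solution is nonzero, hence has positive $L^2$-norm on $\Om_j$) and $\mathfrak m_j(u^\Ga)=0$ when $j\notin\Ga$.

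With this in hand, distinctness follows by contradiction. Suppose two distinct non-empty subsets $\Ga_1\ne\Ga_2$ satisfy $u_{\la_n}^{\Ga_1}=u_{\la_n}^{\Ga_2}$ along some sequence $\la_n\to\infty$. Choosing an index $j_0$ in the symmetric difference of $\Ga_1$ and $\Ga_2$, say $j_0\in\Ga_1\setminus\Ga_2$, I would apply the dichotomy above to both families along a common subsequence, obtaining limits $u^{\Ga_1}$ and $u^{\Ga_2}$; the assumed equality forces $u^{\Ga_1}=u^{\Ga_2}$, whence $\mathfrak m_{j_0}(u^{\Ga_1})=\mathfrak m_{j_0}(u^{\Ga_2})$. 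But the left-hand side is positive (as $j_0\in\Ga_1$) while the right-hand side is zero (as $j_0\notin\Ga_2$), a contradiction. Hence for each pair $\Ga_1\ne\Ga_2$ there is a threshold beyond which $u_\la^{\Ga_1}\ne u_\la^{\Ga_2}$; enlarging $\la_*$ to exceed the finitely many pairwise thresholds, all $2^k-1$ solutions $\{u_\la^\Ga\}$ become pairwise distinct for $\la\ge\la_*$, which proves the claim.

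I expect the genuine analytic content to be entirely absorbed in Theorem \ref{t1.1}; the only delicacy here is bookkeeping around the subsequential nature of the convergence. One obtains strong convergence merely along subsequences, so the separation of subsets must be phrased as a contradiction argument rather than a direct passage to the limit, and one must confirm that the limiting least energy profiles are genuinely nontrivial on each component of $\Ga$ (so that $\mathfrak m_j$ stays bounded away from $0$) and vanish off $\Om_\Ga$. No new estimate beyond those already used for Theorem \ref{t1.1} is required.
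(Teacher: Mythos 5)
Your proposal is correct and follows exactly the route the paper takes: the paper's entire proof of Theorem \ref{t1.2} is the single remark that it ``can be directly obtained from Theorem \ref{t1.1},'' with the counting over the $2^k-1$ non-empty subsets and the distinctness of the corresponding solutions left implicit. Your write-up simply supplies those omitted details (the max over finitely many thresholds, and the separation of solutions via the $L^2$-mass on each $\Omega_j$ together with the subsequential strong convergence), and the nontriviality of the limit on each component $j\in\Gamma$ that you flag as the one delicate point is indeed guaranteed by the paper's construction, since the solutions lie in $A_{\mu,R}^{\lambda}$ and hence satisfy $\|u\|_{\lambda,\Omega_j'}$ bounded below by a positive constant for every $j\in\Gamma$.
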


Theorem \ref{t1.2} can be directly obtained from Theorem \ref{t1.1}. The structure of this paper is arranged as follows. In section 2, we provide some notations in the proof of main theorems. In the following three sections, we aim to prove some important lemma. Finally, we get the Theorem \ref{t1.1}.

\section{Preliminary}

In this section, we outline the variational framework for problem \eqref{eq1.1} and give some preliminary lemmas.
For any $v: \mathbb{R}^N \rightarrow \mathbb{C}$, define
$$
\nabla_A v:=(\nabla+i A) v
$$
and
$$
H_A^1(\mathbb{R}^N, \mathbb{C}):=\{u \in L^2(\mathbb{R}^N, \mathbb{C}):|\nabla_A u| \in L^2(\mathbb{R}^N, \mathbb{R})\}
$$
The space $H_A^1(\mathbb{R}^N, \mathbb{C})$ is a Hilbert space endowed with the scalar product
$$
\langle u, v\rangle:=\operatorname{Re} \int_{\mathbb{R}^N}\left(\nabla_A u \overline{\nabla_A v}+u \overline{v}\right) d x  \quad \text { for any } u, v \in H_A^1\left(\mathbb{R}^N, \mathbb{C}\right),
$$
where Re and the bar denote the real part of a complex number and the complex conjugation, respectively. Moreover, we denote by $\|u\|_A$ the norm induced by this inner product.

Since $A \in L_{\text {loc }}^2(\mathbb{R}^N, \mathbb{R}^N)$, we have the following diamagnetic inequality  on $H_A^1(\mathbb{R}^N, \mathbb{C})$(see Theorem 7.21 in \cite{EHBM2001}):
\begin{equation}\label{eq2.1}
 \left|\nabla_A u(x)\right| \geq|\nabla| u(x)|| .
\end{equation}
Let
$$
E_\lambda:=\left\{u \in H_A^1(\mathbb{R}^N, \mathbb{C}): \int_{\mathbb{R}^N} \lambda V(x)|u|^2 d x<\infty\right\}
$$
with the norm
$$
\|u\|_\lambda^2=\int_{\mathbb{R}^N}\left(\left|\nabla_A u\right|^2+(\lambda V(x)+1)|u|^2\right) d x .
$$
Obviously, $(E_\lambda,\|\cdot\|_\lambda)$ is a Hilbert space and $E_\lambda \subset H_A^1(\mathbb{R}^N, \mathbb{C})$ for any $\lambda>0$.
Assume $B_R(0) \subset \mathbb{R}^N$ is an open set, consider
\begin{eqnarray*}
&& H_A^1(B_R(0)):=\left\{u \in L^2(B_R(0), \mathbb{C}):\left|\nabla_A u\right| \in L^2(B_R(0), \mathbb{R})\right\}, \\
&& \|u\|_{H_{A,R}^1}=\left(\int_{B_R(0)}\left(\left|\nabla_A u\right|^2+|u|^2\right) d x\right)^{\frac{1}{2}}, \\
&& E_{\lambda,R}(B_R(0), \mathbb{C}):=\left\{u \in H_A^1(B_R(0), \mathbb{C}): \int_{B_R(0)} \lambda V(x)|u|^2 d x<\infty\right\}, \\
&& \|u\|_{\lambda, R}^2=\int_{B_R(0)}\left(\left|\nabla_A u\right|^2+(\lambda V(x)+1)|u|^2\right) d x .
\end{eqnarray*}
Let $H_A^{0,1}(B_R(0), \mathbb{C})$ be the Hilbert space defined by the closure of $C_0^{\infty}(B_R(0), \mathbb{C})$ under the norm $\|u\|_{H_{A,R}^1}$. \eqref{eq2.1} implies that if $u\in H_A^1(\mathbb{R}^N, \mathbb{C})$, then $|u|\in H^1(\mathbb{R}^N, \mathbb{R})$. Therefore, the embedding $E_\lambda \hookrightarrow L^r(\mathbb{R}^N, \mathbb{C})$ is continuous for $2 \leq r \leq 2^*$ and the embedding $E_\lambda \hookrightarrow L_{\mathrm{loc}}^r(\mathbb{R}^N, \mathbb{C})$ is compact for $1 \leq r<2^*$.

From a mathematical perspective, The problem \eqref{eq1.1} has some interesting difficulties. In order to solve the problem \eqref{eq1.1}, if we try to apply variational method, the energy functional corresponding to \eqref{eq1.1} is
$$
I_{\lambda}(v)=\frac{1}{2} \int_{\mathbb{R}^N}\left(\left|\nabla_A v\right|^2+(\lambda V(x)+1)|v|^2\right) d x-\frac{1}{q}\int_{\mathbb{R}^N} |v|^{q} d x-\int_{\mathbb{R}^N} F(v) d x
$$
and
$$
\langle I'_{\lambda}(v),\phi\rangle= \mathrm{Re}\left(\int_{\mathbb{R}^N}\left(\nabla_A v\overline{\nabla_A\phi}+(\lambda V(x)+1)v\overline{\phi}\right) d x-\int_{\mathbb{R}^N} |v|^{q-2}v\overline{\phi} d x-\int_{\mathbb{R}^N} F'(v)\overline{\phi} d x\right)
$$
with
$$
F(v)=\int_0^v s \log |s|^2 d s=\frac{1}{2} |v|^2 \log |v|^2-\frac{|v|^2}{2}\ \text{and}\ \ v,\ \phi\in E_\lambda.
$$
According to the fact there exist functions $u \in H_A^1(\mathbb{R}^N, \mathbb{C})$ such that $\int_{\mathbb{R}^N} |u|^2 \log |u|^2dx=-\infty$, which gives the possibility that
$I_{\lambda}(u)=\infty$, then the functional $I_{\lambda}$ is not well defined on $H_A^1(\mathbb{R}^N, \mathbb{C})$. To overcome this difficulty, we consider a decomposition of the form
$$
F_2(t)-F_1(t)=\frac{1}{2} |t|^2 \log |t|^2, \  \forall t \in \mathbb{C},
$$
where $F_1\in C^1$ and $F_1$ is a nonnegative convex function, $F_2\in C^1$ satisfies Sobolev subcritical growth. This decomposition has been explored in many works (see \cite{{MSAS2015},{COACJ12020},{COACJ22023}}). In fact, fixed $\delta>0$ small enough, let
\begin{equation*}
  F_1(s):=\left\{\aligned
&0,  &s  =0, \\
&-\frac{1}{2}| s|^2 \log |s|^2, & 0<|s|  <\delta, \\
&-\frac{1}{2} |s|^2\left(\log \delta^2+3\right)+2 \delta|s|-\frac{\delta^2}{2}, & |s| \geq \delta
\endaligned
\right.
\end{equation*}
and
$$
F_2(s):= \begin{cases}0, & |s|<\delta, \\ \frac{1}{2} |s|^2 \log \left(\frac{|s|^2}{\delta^2}\right)+2 \delta|s|-\frac{3}{2} |s|^2-\frac{\delta^2}{2}, & |s| \geq \delta\end{cases}
$$
for every $s \in \mathbb{C}$. Hence,
\begin{equation}\label{eq2.2}
  F_2(s)-F_1(s)=\frac{1}{2} s^2 \log s^2, \quad \forall s \in \mathbb{R} .
\end{equation}
It is easy to see that $F_1$ and $F_2$ satisfy the following properties:
\begin{itemize}
\item[$(f_1)$] $F_1$ is an even function with $F_1^{\prime}(s) s \geq 0$ and $F_1 \geq 0$. Moreover, $F_1 \in C^1(\mathbb{R}, \mathbb{R})$  is convex for $\delta \approx 0^{+}$.

\item[$(f_2)$] $F_2 \in C^1(\mathbb{R}, \mathbb{R}) \cap C^2((\delta,+\infty), \mathbb{R})$ and there exists $C=C_p>0$ such that
$$
\left|F_2^{\prime}(s)\right| \leq C|s|^{p-1}, \quad \forall s \in \mathbb{R},\ p \in\left(2,2^*\right).
$$

\item[$(f_3)$] $s \mapsto \frac{F_2^{\prime}(s)}{s}$ is a nondecreasing function for $s>0$ and a strictly increasing function for $s>\delta$.

\item[$(f_4)$] $\lim\limits_{s \rightarrow \infty} \frac{F_2^{\prime}(s)}{s}=\infty$.
\end{itemize}
Therefore, we can rewrite the functional $I_{\lambda}$ as
\begin{eqnarray*}
I_{\lambda}(u)=\frac{1}{2} \int_{\mathbb{R}^N}\left(\left|\nabla_A u\right|^2+(\lambda V(x)+1)|u|^2\right) d x-\frac{1}{q}\int_{\mathbb{R}^N} |u|^{q} d x +\int_{\mathbb{R}^N} F_1(u) d x-\int_{\mathbb{R}^N} F_2(u) d x.
\end{eqnarray*}
This technology guarantees that $I_{\lambda}$ can be decomposed as a sum of a $C^1$-functional with a convex and lower semi-continuous functional. In this way, the critical point theory of functionals developed in \cite{AS1986} can be used to find solutions of the problem \eqref{eq1.1}.

By using $(V_1)$, for any open set $K \subset \mathbb{R}^N$, it is easy to see that
$$
\|u\|_{2, K}^2 \leq \int_K\left(\left|\nabla_A u\right|^2+(\lambda V(x)+1)|u|^2\right) d x \quad \text { for all } u \in E_\lambda(K, \mathbb{C}) \text { and } \lambda>0
$$
where $\|u\|_{2, K}^2=\int_K|u|^2 d x$. Hence, similar to \cite{YHDKT2003}, it follows that
\begin{lemma}{\rm (see \cite[Corollary 1.4]{{YHDKT2003}})}\label{L2.1}
There exist $a_0, b_0>0$ with $a_0 \sim 1$ and $b_0 \sim 0$ such that for any open set $K \subset \mathbb{R}^N$,
\begin{equation*}
   a_0\|u\|_{\lambda, K}^2 \leq\|u\|_{\lambda, K}^2-b_0\|u\|_{2, K}^2,
\end{equation*}
for all $u \in E_\lambda(K, \mathbb{C})$ and $\lambda>0$.
 \end{lemma}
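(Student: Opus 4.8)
The plan is to obtain the claimed two-sided control directly from the elementary bound displayed immediately before the statement, namely
$$
\|u\|_{2,K}^2 = \int_K |u|^2\, dx \le \int_K \big(|\nabla_A u|^2 + (\lambda V(x)+1)|u|^2\big)\, dx = \|u\|_{\lambda,K}^2 ,
$$
which is itself immediate from $(V_1)$: since $V \ge 0$ we have $\lambda V(x)+1 \ge 1$, and discarding the nonnegative term $|\nabla_A u|^2$ yields the inequality. The point to record is that this holds for every open $K$, every $u \in E_\lambda(K,\mathbb{C})$ and every $\lambda>0$, with a constant that is independent of $\lambda$; this $\lambda$-uniformity is exactly what the later penalization and compactness arguments will need.

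First I would rewrite the asserted inequality $a_0\|u\|_{\lambda,K}^2 \le \|u\|_{\lambda,K}^2 - b_0\|u\|_{2,K}^2$ in the equivalent form
$$
(1-a_0)\,\|u\|_{\lambda,K}^2 \ge b_0\,\|u\|_{2,K}^2 .
$$
By the bound above, $b_0\|u\|_{2,K}^2 \le b_0\|u\|_{\lambda,K}^2$, so it suffices to choose the two constants with $b_0 \le 1-a_0$. Concretely I would fix a small $b_0 \in (0,1)$ and set $a_0 := 1-b_0$; then $a_0 \sim 1$ and $b_0 \sim 0$ as required, and
$$
\|u\|_{\lambda,K}^2 - b_0\|u\|_{2,K}^2 \ge \|u\|_{\lambda,K}^2 - b_0\|u\|_{\lambda,K}^2 = (1-b_0)\|u\|_{\lambda,K}^2 = a_0\|u\|_{\lambda,K}^2 ,
$$
which is precisely the assertion. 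Since $b_0$ and $a_0$ are chosen independently of $K$, $u$ and $\lambda$, the estimate is uniform, as the statement demands.

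There is no genuine analytic obstacle here; the content is structural rather than quantitative. The essential ingredient is the additive constant $1$ in the weight $\lambda V(x)+1$, which supplies the free spectral gap $\|u\|_{2,K}^2 \le \|u\|_{\lambda,K}^2$ and thereby removes any need for the first-eigenvalue positivity hypothesis that drives the corresponding Corollary~1.4 of \cite{YHDKT2003}. In that reference the shift $+1$ is replaced by a potential $P$, so one must instead invoke the assumed positivity of the first Dirichlet eigenvalue of $-\Delta + P$ on each $\Omega_j$; in the present formulation that role is played automatically by the shifted weight. The only point I would pause to verify is that the magnetic/complex structure introduces no complication, but since the proof merely throws away the nonnegative term $\int_K|\nabla_A u|^2\,dx$, the diamagnetic inequality \eqref{eq2.1} is not even needed and the constants coincide with those of the scalar case.
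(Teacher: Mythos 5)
Your proof is correct and is essentially the argument the paper intends: the paper records exactly the same preliminary inequality $\|u\|_{2,K}^2 \le \|u\|_{\lambda,K}^2$ (a consequence of $(V_1)$ and the $+1$ in the weight) right before the lemma and then defers the trivial algebra to \cite[Corollary 1.4]{YHDKT2003}, whereas you write it out by choosing $a_0 = 1-b_0$. Your closing remark is also on point: the additive shift $+1$ here plays the role of the first-eigenvalue positivity hypothesis in Ding--Tanaka, which is why no spectral assumption is needed in this setting.
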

\section{Auxiliary problem}
For any $j \in\{1, \ldots, k\}$, fix a bounded open subset $\Omega_j^{\prime}$ with smooth boundary such that
\begin{equation*}
 \overline{\Omega_j} \subset \Omega_j^{\prime}
\end{equation*}
and
\begin{equation*}
  \overline{\Omega_j^{\prime}} \cap \overline{\Omega_l^{\prime}}=\emptyset\  \text{for all}\  j \neq l.
\end{equation*}
Next, fix a non-empty subset $\Gamma \subset\{1, \ldots, k\}$ and $R>0$ such that $\Omega_{\Gamma}^{\prime} \subset B_R(0)$ and
$$
\Omega_{\Gamma}=\bigcup_{j \in \Gamma} \Omega_j, \quad \Omega_{\Gamma}^{\prime}=\bigcup_{j \in \Gamma} \Omega_j^{\prime} .
$$

From now on, we will study the auxiliary problem of \eqref{eq1.1}. Fix $b_0 \approx 0^{+}$ and $a_0>\delta$ are  constants given in Lemma \ref{L2.1} in a such way that $F_2^{\prime}(s) +a_0^{q-2}s=b_0s$ with $|s|=a_0$. Using these notations, we set
\begin{equation*}
   \widetilde{F}_2^{\prime}(s):=\left\{\aligned
&F_2^{\prime}(s)+|s|^{q-2}s, & 0 \leq |s| \leq a_0, \\
&b_0 s, & |s| \geq a_0
\endaligned
\right.
\end{equation*}
and
$$
g_2(x, t):=\chi_{\Gamma}(x) F_2^{\prime}(t)+\left(1-\chi_{\Gamma}(x)\right) \widetilde{F}_2^{\prime}(t),
$$
where
$$
\chi_{\Gamma}(x):=\left\{\begin{array}{ll}
1, &  x \in \Omega_{\Gamma}^{\prime}, \\
0, & x \in B_R(0)\backslash\Omega_{\Gamma}^{\prime}.
\end{array}\right.
$$
Since then, we will study the existence of solution for the following auxiliary problem
\begin{equation}\label{eq3.1}
-(\nabla+i A(x))^2 u+(\lambda V(x)+1) u=g_2(x, u )-F_1^{\prime}(u), \ x\in B_R(0),
\end{equation}
and $u = 0$ on $\partial B_R(0)$. It is worth mentioning that, if $u_{\lambda, R}$ is a solution of \eqref{eq3.1} with $0<|u_{\lambda, R}| \leqslant a_0$ for all $x \in B_R(0) \backslash \Omega_{\Gamma}^{\prime}$, then $g_2\left(x, u_{\lambda, R}\right)=F_2^{\prime}\left(u_{\lambda, R}\right)+|u_{\lambda, R}|^{q-2}u_{\lambda, R}$, so $u_{\lambda, R}$ is also a solution of
\begin{equation}\label{eq3.2}
  \begin{cases}-(\nabla+i A(x))^2 u+\lambda V(x) u=|u|^{q-2}u+u \log |u|^2, & \text { in } B_R(0),  \\ u=0, & \text { on } \partial B_R(0) .\end{cases}
\end{equation}
Therefore, our goal is to find the nontrivial critical points for the functional
$$
I_{\lambda, R}(u)=\int_{B_R(0)}\left(\left|\nabla_A u\right|^2+(\lambda V(x)+1)|u|^2\right) d x+\int_{B_R(0)} F_1(u) d x-\int_{B_R(0)} G_2(x, u ) d x,
$$
where $G_2(x, t)=\int_0^t g_2(x, s) d s$ for all $(x, t) \in B_R(0) \times \mathbb{C}$. Clearly, $I_{\lambda, R} \in C^1(E_{\lambda, R}, \mathbb{R})$.

Now, we show that $I_{\lambda, R}$ satisfies the geometric structure of the Mountain Pass Theorem.
\begin{lemma}\label{L3.1}
For all $\lambda>0$, the functional $I_{\lambda, R}$ satisfies the following conditions:

$\mathrm{(i)}$ there exist $\alpha, \rho>0$ such that $I_{\lambda, R}(u) \geqslant \alpha$ for any $u \in E_{\lambda, R}$ with $\|u\|_{\lambda, R}=\rho$;

$(\mathrm{ii)}$ there exists $e \in E_{\lambda, R}$ with $\|e\|_{\lambda, R}>\rho$ such that $I_{\lambda, R}(e)<0$.
\end{lemma}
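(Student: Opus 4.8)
The plan is to verify the two geometric conditions separately, drawing on the growth properties $(f_1)$–$(f_4)$ of $F_1,F_2$, the structure of the penalized nonlinearity $g_2$, and Lemma \ref{L2.1}.

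For part (i) I would first record a pointwise upper bound for the primitive $G_2$. Since $F_1\ge 0$ by $(f_1)$, that term only helps and can be discarded. On $\Omega_{\Gamma}^{\prime}$ one has $G_2(x,t)=F_2(t)$, and $(f_2)$ gives $F_2(t)\le \frac{C}{p}|t|^p$; on $B_R(0)\setminus\Omega_{\Gamma}^{\prime}$ one has $G_2(x,t)=\widetilde{G}_2(t)$, whose definition produces a term $\tfrac{|t|^{q}}{q}$ together with $F_2$ for $|t|\le a_0$ and a quadratic tail of slope $b_0$ for $|t|\ge a_0$ (the matching being exactly the condition imposed on $a_0,b_0$). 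Collecting the cases yields a bound of the form $G_2(x,t)\le \tfrac{b_0}{2}|t|^2+C(|t|^p+|t|^q)$ for all $(x,t)$, so that
\[
I_{\lambda,R}(u)\ge \|u\|_{\lambda,R}^2-\tfrac{b_0}{2}\|u\|_{2,R}^2-C\bigl(\|u\|_{L^p}^p+\|u\|_{L^q}^q\bigr).
\]
The crucial absorption of the $b_0$-term is furnished by Lemma \ref{L2.1} with $K=B_R(0)$, which gives $\|u\|_{\lambda,R}^2-\tfrac{b_0}{2}\|u\|_{2,R}^2\ge \|u\|_{\lambda,R}^2-b_0\|u\|_{2,R}^2\ge a_0\|u\|_{\lambda,R}^2$; the remaining $L^p,L^q$ norms are dominated via the continuous subcritical embeddings $E_{\lambda,R}\hookrightarrow L^p,L^q$ ($2<p,q<2^*$). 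This leads to $I_{\lambda,R}(u)\ge a_0\rho^2-C'(\rho^p+\rho^q)$ on the sphere $\|u\|_{\lambda,R}=\rho$, and since $p,q>2$ the quadratic term dominates for $\rho$ small, producing the desired $\alpha>0$.

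For part (ii) the key point is to test along a direction that feels the genuine superquadratic growth of $F_2$ rather than the linear penalized tail. I would therefore fix a real $\phi\in C_0^{\infty}(\Omega_{\Gamma},\mathbb{C})$ with $\phi\ge 0$, $\phi\not\equiv 0$, supported where $\chi_{\Gamma}\equiv 1$ and $V\equiv 0$, so that $g_2(x,\cdot)=F_2^{\prime}$ on $\operatorname{supp}\phi$, hence $G_2(x,t\phi)=F_2(t\phi)$ and $\|\phi\|_{\lambda,R}^2$ is independent of $\lambda$. Then
\[
I_{\lambda,R}(t\phi)=t^2\|\phi\|_{\lambda,R}^2+\int_{B_R(0)}F_1(t\phi)\,dx-\int_{\Omega_{\Gamma}}F_2(t\phi)\,dx.
\]
Because $F_1$ grows only quadratically (its expression for $|s|\ge\delta$ has leading coefficient $-\tfrac12(\log\delta^2+3)>0$ for $\delta\approx 0^{+}$), the first two terms are $O(t^2)$. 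On the other hand $(f_4)$ yields, by L'Hospital's rule, $F_2(s)/s^2\to\infty$ as $s\to\infty$; restricting the integral to a set where $\phi\ge c>0$ and using that $F_2$ is nonnegative and increasing on $(\delta,\infty)$ gives $t^{-2}\int_{\Omega_{\Gamma}}F_2(t\phi)\,dx\to\infty$. Consequently $I_{\lambda,R}(t\phi)\to-\infty$, and taking $e=t\phi$ with $t$ large enough to also ensure $\|e\|_{\lambda,R}>\rho$ settles (ii).

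I expect the main obstacle to lie in (ii), in the correct choice of test direction. Inside the penalization region $g_2$ grows only linearly, so a test function supported there would keep $I_{\lambda,R}$ coercive and defeat the descent; the argument hinges on placing $\operatorname{supp}\phi$ inside $\Omega_{\Gamma}$, where the unmodified superquadratic $F_2$ acts, and on checking that this superquadraticity (extracted from $(f_4)$) strictly beats the competing quadratic growth of the convex term $F_1$. In (i) the only mild subtlety is bookkeeping the constant in the quadratic tail of $\widetilde{G}_2$ so that it is genuinely controlled by $\tfrac{b_0}{2}|t|^2$ up to higher-order $L^p$ contributions, after which Lemma \ref{L2.1} performs the essential coercivity estimate.
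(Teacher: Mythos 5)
Your proof is correct, and it splits into two cases relative to the paper. Part (i) is essentially the paper's argument: drop $F_1\ge 0$, bound $G_2$ by subcritical powers, and win near the sphere of small radius $\rho$ since $p,q>2$. The only cosmetic difference is how the penalized tail is handled: the paper observes (via the matching condition $F_2'(s)+a_0^{q-2}s=b_0s$ at $|s|=a_0$ and the monotonicity $(f_3)$) that $\widetilde F_2'(s)\le F_2'(s)+|s|^{q-2}s$ for all $s$, hence $G_2(x,t)\le F_2(t)+|t|^q$ everywhere and Lemma \ref{L2.1} is never needed; you instead keep the quadratic tail $\tfrac{b_0}{2}|t|^2$ and absorb it with Lemma \ref{L2.1}. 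Both are valid. Part (ii) is where you genuinely diverge from the paper. The paper never invokes $(f_4)$ or any asymptotic superquadraticity: for $w\in C_0^\infty(\Omega_\Gamma)$ it uses the exact identity \eqref{eq2.2}, which on $\operatorname{supp} w$ gives $\int F_1(sw)\,dx-\int G_2(x,sw)\,dx=-\tfrac{s^2}{2}\int |w|^2\log\left(s^2|w|^2\right)dx$, so that $I_{\lambda,R}(sw)=\tfrac{s^2}{2}\|w\|_{\lambda,R}^2-\tfrac{s^2}{2}\int|w|^2\log|w|^2\,dx-\tfrac{s^2}{2}\log(s^2)\int|w|^2\,dx$, and the explicit factor $-s^2\log s^2$ forces the functional to $-\infty$. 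Your route — a quadratic upper bound on $F_1$ (correct, since its leading coefficient for $|s|\ge\delta$ is $-\tfrac12(\log\delta^2+3)>0$ for $\delta\approx 0^+$, and $F_1$ is bounded on $[0,\delta]$) against $F_2(s)/s^2\to\infty$ extracted from $(f_4)$ by L'Hospital, evaluated on a set $\{\phi\ge c\}$ of positive measure — is also sound, and is more robust in that it would survive replacing the logarithm by any nonlinearity satisfying $(f_1)$--$(f_4)$. What the paper's identity buys is exactness: it exhibits the divergence rate and makes immediate that $s_0$ can be chosen independently of $\lambda$ and $R$, which is used later in the minimax construction; in your version this independence also holds (all your bounds are $\lambda$-free because $V\equiv 0$ and $\chi_\Gamma\equiv 1$ on $\operatorname{supp}\phi\subset\Omega_\Gamma$), but it deserves to be stated explicitly rather than left implicit.
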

\begin{proof}
(i) Using $(f_1)-(f_2)$, it follows that
\begin{eqnarray*}
I_{\lambda, R}(u)&=&\frac{1}{2}\int_{B_R(0)}\left(\left|\nabla_A u\right|^2+(\lambda V(x)+1)|u|^2\right) d x+\int_{B_R(0)} F_1(u) d x-\int_{B_R(0)} G_2(x, u) d x\\
&\geq&\frac{1}{2}\|u\|_{\lambda, R}^2-\int_{B_R(0)} G_2(x, u) d x\\
&\geq&\frac{1}{2}\|u\|_{\lambda, R}^2-\int_{B_R(0)} F_2(u) d x-\int_{B_R(0)}|u|^q d x\\
&\geq&\frac{1}{2}\|u\|_{\lambda, R}^2-C\|u\|_{\lambda, R}^p-C_1\|u\|_{\lambda, R}^q\\
&>&0,
\end{eqnarray*}
where $C_1>0$ and $\|u\|_{\lambda, R}>0$ small enough.

(ii) Fixing $0<w \in C_0^{\infty}(\Omega_{\Gamma})$, by \eqref{eq2.2}, we have
\begin{eqnarray*}
I_{\lambda, R}(sw)&=&\frac{s^2}{2}\|w\|_{\lambda, R}^2+\int_{B_R(0)} F_1(sw) d x-\int_{B_R(0)} G_2(x, sw) d x\\
&=&s^2I_{\lambda, R}(w)+\int_{B_R(0)} [F_1(sw)-F_1(w)] d x+\int_{B_R(0)}[G_2(x, u)-G_2(x, sw)] d x   \\
&=&s^2I_{\lambda, R}(w)+\int_{B_R(0)} \left(\frac{1}{2}|w|^2\log |w|^2-\frac{s^2}{2}|w|^2\log s^2|w|^2\right)dx\\
&=& \int_{B_R(0)} \frac{1}{2}|w|^2\log |w|^2dx+s^2\left(I_{\lambda, R}(w)-\int_{B_R(0)} \frac{1}{2}|w|^2\log s^2|w|^2dx\right)\\
&\rightarrow&-\infty
\end{eqnarray*}
as $s \rightarrow+\infty$. Hence, there exists $s_0>0$ independent of $\lambda>0$ and $R>0$ large enough such that $I_{\lambda, R}(s_0w)<0$, that is $e=s_0w$.
\end{proof}

By Lemma \ref{L3.1} and using a variant of the mountain pass theorem without the Palais-Smale condition (see Theorem 2.9 in \cite{MW1996}), we obtain the mountain pass level associated with $I_{\lambda, R}$, denoted by $c_{\lambda, R}$, is given by
$$
c_{\lambda, R}=\inf\limits_{\gamma \in \Gamma_{\lambda, R}} \max _{t \in[0,1]} I_{\lambda, R}(\gamma(t)),
$$
where $\Gamma_{\lambda, R}=\left\{\gamma \in C\left([0,1], E_{\lambda, R}\right): \gamma(0)=0\right.$ and $\left.I_{\lambda, R}(\gamma(1))<0\right\}$. Moreover, by Lemma \ref{L3.1},
$$
c_{\lambda, R} \geqslant \alpha>0, \quad \forall \lambda>0, \quad R>0 \text { large enough. }
$$

In the sequel, the following logarithmic inequality is useful(see \cite[page 153]{{MDL2003}})
$$
\int_{\mathbb{R}^N}|u|^2 \log \left(\frac{|u|}{\|u\|_2}\right) \leq C\|u\|_2 \log \left(\frac{\|u\|_{2^*}}{\|u\|_2}\right), \quad \forall u \in L^2\left(\mathbb{R}^N\right) \cap L^{2^*}(\mathbb{R}^N)
$$
for some positive constant $C$. According to this inequality, it is obvious that
\begin{equation}\label{eq3.3}
  \int_{\Lambda_{\varepsilon}}|u|^2 \log \left(\frac{|u|}{\|u\|_{L^2\left(\Lambda_{\varepsilon}\right)}}\right) \leq C\|u\|_{L^2\left(\Lambda_{\varepsilon}\right)} \log \left(\frac{\|u\|_{L^{2^*}\left(\Lambda_{\varepsilon}\right)}}{\|u\|_{L^2\left(\Lambda_{\varepsilon}\right)}}\right), \quad \forall u \in L^2\left(\Lambda_{\varepsilon}\right) \cap L^{2^*}\left(\Lambda_{\varepsilon}\right) .
\end{equation}
\begin{lemma}\label{L3.2}
Assume $\left(v_n\right)$ is a $(P S)_c$ sequence for $I_{\lambda, R}$, the sequence $\left(v_n\right)$ is bounded in $E_{\lambda, R}$.
\end{lemma}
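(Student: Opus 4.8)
The plan is to argue by contradiction, combining the two defining relations of a $(PS)_c$ sequence with the logarithmic Sobolev inequality \eqref{eq3.3}. Write $t_n:=\|v_n\|_{\lambda,R}$ and suppose, for contradiction, that $t_n\to\infty$ along a subsequence. The $(PS)_c$ conditions give $I_{\lambda,R}(v_n)=c+o(1)$ and $\langle I_{\lambda,R}'(v_n),\varphi\rangle=o(1)\|\varphi\|_{\lambda,R}$; in particular $\langle I_{\lambda,R}'(v_n),v_n\rangle=o(t_n)$. The only genuinely dangerous term is $\int_{\Omega_\Gamma'}F_2(v_n)$, which is super-quadratic ($F_2(t)\sim\frac12|t|^2\log|t|^2$); everything else is either controlled by the penalization or has a favourable sign. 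This is the \emph{main obstacle}, and it will be defused by \eqref{eq3.3} once the $L^2$-mass of $v_n$ on $\Omega_\Gamma'$ is shown to grow at most linearly in $t_n$.

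The first step is to bound that mass. Since the principal part of $I_{\lambda,R}$ is quadratic, forming $2I_{\lambda,R}(v_n)-\langle I_{\lambda,R}'(v_n),v_n\rangle$ cancels the $\|v_n\|_{\lambda,R}^2$ contribution and leaves
\begin{equation*}
2c+o(1)+o(t_n)=\int_{B_R(0)}\bigl(2F_1(v_n)-F_1'(v_n)v_n\bigr)\,dx-\int_{B_R(0)}\bigl(2G_2(x,v_n)-g_2(x,v_n)v_n\bigr)\,dx.
\end{equation*}
A direct computation from \eqref{eq2.2} gives $2F_2(t)-F_2'(t)t=-(|t|-\delta)^2$ for $|t|\ge\delta$, so on $\Omega_\Gamma'$ the quantity $-(2G_2-g_2v_n)$ is nonnegative and bounded below by $\frac14|v_n|^2-C$ (and where a power term $|t|^q$ with $q>2$ is present it adds the further nonnegative quantity $(1-\frac2q)|v_n|^q$). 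Using $(f_1)$ one checks $2F_1(t)-F_1'(t)t\le C(1+|t|)$, while on $B_R(0)\setminus\Omega_\Gamma'$ the penalized integrand $2G_2-g_2v_n$ is bounded (it is constant where $|v_n|\ge a_0$ and the remaining power term is bounded where $|v_n|\le a_0$). Rearranging and using $\|v_n\|_{2,B_R(0)}\le t_n$ then yields $m_n:=\|v_n\|_{L^2(\Omega_\Gamma')}^2\le C(1+t_n)$.

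Next I would return to the energy level. From $I_{\lambda,R}(v_n)=c+o(1)$ and $F_1\ge0$,
\begin{equation*}
\frac12 t_n^2\le c+o(1)+\int_{\Omega_\Gamma'}F_2(v_n)\,dx+\int_{B_R(0)\setminus\Omega_\Gamma'}\widetilde F_2(v_n)\,dx.
\end{equation*}
The decisive feature of the penalization is that $\widetilde F_2(t)\le\frac{b_0}{2}|t|^2+C$, so the exterior integral is at most $\frac{b_0}{2}\|v_n\|_{2,B_R(0)\setminus\Omega_\Gamma'}^2+C$, which Lemma \ref{L2.1} bounds by $\frac{1-a_0}{2}t_n^2+C$. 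Absorbing it into the left-hand side leaves
\begin{equation*}
\frac{a_0}{2}t_n^2\le c+o(1)+\int_{\Omega_\Gamma'}F_2(v_n)\,dx+C.
\end{equation*}

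It remains to estimate $\int_{\Omega_\Gamma'}F_2(v_n)$, which is where \eqref{eq3.3} enters. Since $F_2(t)\le\frac12|t|^2\log\frac{|t|^2}{\delta^2}+2\delta|t|$, I would split $\int_{\Omega_\Gamma'}|v_n|^2\log|v_n|^2=\int_{\Omega_\Gamma'}|v_n|^2\log\frac{|v_n|^2}{m_n}+m_n\log m_n$ and apply \eqref{eq3.3} on $\Omega_\Gamma'$ to the first integral, together with the Sobolev bound $\|v_n\|_{L^{2^*}(\Omega_\Gamma')}\le Ct_n$. This shows that $\int_{\Omega_\Gamma'}F_2(v_n)$ grows at most like $m_n\log t_n+m_n|\log m_n|$ up to lower-order terms; with the linear bound $m_n\le C(1+t_n)$ from the first step, this is $o(t_n^2)$. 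Hence $\frac{a_0}{2}t_n^2\le c+o(1)+o(t_n^2)+C$, and dividing by $t_n^2$ and letting $n\to\infty$ gives $\frac{a_0}{2}\le0$, a contradiction. Therefore $(v_n)$ is bounded in $E_{\lambda,R}$. The heart of the argument is this last step: only because the logarithmic Sobolev inequality downgrades the super-quadratic $\int F_2$ to subquadratic growth $o(t_n^2)$ can it be dominated by the coercive term $\frac{a_0}{2}t_n^2$.
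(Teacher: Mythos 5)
Your proof is correct and follows essentially the same strategy as the paper's: combining the energy with $\langle I_{\lambda,R}'(v_n),v_n\rangle$ (the paper uses $I_{\lambda,R}-\frac12\langle I_{\lambda,R}',\cdot\rangle$, you use the equivalent $2I_{\lambda,R}-\langle I_{\lambda,R}',\cdot\rangle$) to control the $L^2$-mass on $\Omega_{\Gamma}^{\prime}$ linearly in $\|v_n\|_{\lambda,R}$, then invoking the logarithmic Sobolev inequality \eqref{eq3.3} to downgrade the $F_2$-term to subquadratic growth, and finally absorbing the exterior contribution via the penalization bound $G_2(x,t)\le \frac{b_0}{2}t^2+C$ on $B_R(0)\setminus\Omega_{\Gamma}^{\prime}$ together with Lemma \ref{L2.1}. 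The one notable difference is that you apply \eqref{eq3.3} localized to $\Omega_{\Gamma}^{\prime}$ and normalized by $m_n$ (exactly where the $L^2$-mass is controlled), whereas the paper applies it over all of $B_R(0)$ where no such control is available at that stage, so your localization is in fact the more careful execution of the same argument.
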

\begin{proof}
Due to $\left(v_n\right)$ is $(P S)_c$ sequence for $I_{\lambda, R}$, it is obvious that
\begin{equation}\label{eq3.4}
  I_{\lambda, R}\left(v_n\right)-\frac{1}{2}\langle I_{\lambda, R}^{\prime}\left(v_n\right), v_n\rangle\leq c+1 +o_n(1)\left\|v_n\right\|_{\lambda, R}
\end{equation}
for large $n$. Note that,
$$
\int_{B_R(0)}\left[\left(F_1\left(v_n\right)-\frac{1}{2} F_1^{\prime}\left(v_n\right) \overline{v_n}\right)+\left(\frac{1}{2} F_2^{\prime}\left(v_n\right) \overline{v_n}-F_2\left(v_n\right)\right)\right]dx= \frac{1}{2}\int_{B_R(0)}\left|v_n\right|^2dx ,
$$
so we have
\begin{eqnarray}\label{eq3.5}
&& I_{\lambda, R}\left(v_n\right)-\frac{1}{2}\langle I_{\lambda, R}^{\prime}\left(v_n\right), v_n\rangle \nonumber\\
&=&\frac{1}{2}\|v_n\|_{\lambda, R}^2+\int_{B_R(0)} F_1(v_n) d x-\int_{B_R(0)} G_2(x, v_n ) d x-\frac{1}{2}\|v_n\|_{\lambda, R}^2-\frac{1}{2}\int_{B_R(0)} F'_1(v_n)\overline{v_n}dx\nonumber\\
&&+\frac{1}{2}\int_{B_R(0)} G'_2( x, v_n )\overline{v_n}dx\nonumber\\
& =&\int_{B_R(0)}\left(F_1\left(v_n\right)-\frac{1}{2} F_1^{\prime}\left(v_n\right) \overline{v_n}\right)dx+\int_{B_R(0)}\left(\frac{1}{2}  G'_2( x, v_n )\overline{v_n}-G_2\left( x, v_n \right)\right)dx \nonumber\\
& =&\frac{1}{2}\int_{B_R(0)}\left|v_n\right|^2dx+\int_{B_R(0)}\left(F_2\left(v_n\right)-\frac{1}{2} F_2^{\prime}\left(v_n\right) \overline{v_n}+\frac{1}{2}  G'_2( x, v_n )\overline{v_n}-G_2\left( x, v_n \right)\right)dx\nonumber\\
&=&\frac{1}{2} \int_{\Omega_{\Gamma}^{\prime}}\left|v_n\right|^2dx+\int_{B_R(0)\backslash\Omega_{\Gamma}^{\prime}}\left(\frac{1}{2}\left|v_n\right|^2+F_2\left(v_n\right)-\frac{1}{2} F_2^{\prime}\left(v_n\right) \overline{v_n}\right)dx\nonumber\\
&&+\int_{B_R(0)\backslash\Omega_{\Gamma}^{\prime}}\left(\frac{1}{2} G_2^{\prime}\left( x, v_n \right) \overline{v_n}-G_2\left( x, v_n \right)\right)dx.
\end{eqnarray}
By using the fact
\begin{equation*}
  \frac{1}{2}|t|^2+\left[F_2(t)-\frac{1}{2} F_2^{\prime}(t) \overline{t}+\frac{1}{2} G_2^{\prime}( x, t) \overline{t}-G_2( x, t)\right] \geq 0,\   t \in \mathbb{C},\ x \in \mathbb{R}^N,
\end{equation*}
we get
\begin{equation*}
  I_{\lambda, R}\left(v_n\right)-\frac{1}{2}\langle I_{\lambda, R}^{\prime}\left(v_n\right), v_n\rangle \geq \frac{1}{2} \int_{\Omega_{\Gamma}^{\prime}}\left|v_n\right|^2dx,
\end{equation*}
so \eqref{eq3.4} implies that
\begin{equation}\label{eq3.6}
(c+1)+o_n(1)\left\|v_n\right\|_{\varepsilon} \geq \frac{1}{2} \int_{\Omega_{\Gamma}^{\prime}}\left|v_n\right|^2dx.
\end{equation}
According to the definition of $F_1$, there are constants $A, B>0$ satisfying
$$
F_1(t) \leq A|t|^2+B, \quad \forall t \in \mathbb{R} .
$$
This together with \eqref{eq3.6} leads to
\begin{equation}\label{eq3.7}
  \int_{B_R(0)} F_1\left(v_n\right)dx \leq C_{\varepsilon}+o_n(1)\left\|v_n\right\|_{\lambda, R}
\end{equation}
for some $C_{\varepsilon}>0$. Thanks to \eqref{eq3.3}, we obtain that
\begin{eqnarray*}
&&\frac{1}{2} \int_{B_R(0)}\left|v_n\right|^2 \log \left|v_n\right|^2dx=\int_{B_R(0)}\left|v_n\right|^2 \log \left|v_n\right|dx \\
&=&\int_{B_R(0)}\left|v_n\right|^2 \log \left(\frac{\left|v_n\right|}{\left\|v_n\right\|_{L^2\left(B_R(0)\right)}}\right)dx+\left\|v_n\right\|_{L^2\left(B_R(0)\right)}^2 \log \left(\left\|v_n\right\|_{L^2\left(B_R(0)\right)}\right) \\
&\leq& C\left\|v_n\right\|_{L^2\left(B_R(0)\right)} \log \left(\frac{\left\|v_n\right\|_{L^{2^*}\left(B_R(0)\right)}}{\left\|v_n\right\|_{L^2\left(B_R(0)\right)}}\right)+\left\|v_n\right\|_{L^2\left(B_R(0)\right)}^2 \log \left(\left\|v_n\right\|_{L^2\left(B_R(0)\right)}\right)\\
&=&\left(\left\|v_n\right\|_{L^2\left(B_R(0)\right)}^2-C\left\|v_n\right\|_{L^2\left(B_R(0)\right)}\right) \log \left(\left\|v_n\right\|_{L^2\left(B_R(0)\right)}\right)+C\left\|v_n\right\|_{L^2\left(B_R(0)\right)} \log \left(\left\|v_n\right\|_{L^{2^*}\left(B_R(0)\right)}\right),
\end{eqnarray*}
then combines with the embedding $E_{\lambda, R} \hookrightarrow L^t\left(B_R(0)\right)$ to give
\begin{eqnarray*}
\int_{B_R(0)}\left|v_n\right|^2 \log \left|v_n\right|^2dx &\leq&\left(2\| v_n\|_{L^2(B_R(0))}^2-2 C\| v_n\|_{L^2(B_R(0))}\right) \log \left(|| v_n \|_{L^2\left(B_R(0)\right)}\right)\\
&&+ \widetilde{C}\left\|v_n\right\|_{\lambda, R} \left|\log ( \widetilde{C}\left\|v_n\right\|_{\lambda, R})\right|
\end{eqnarray*}
for some convenient $\widetilde{C}>0$ independent of $\varepsilon$. In the last inequality, we use the fact that the function $t \mapsto \log t$ is increasing on $t>0$. Now, using the fact that given $r \in(0,1)$, there is $A>0$ satisfying
$$
|t \log t| \leq A\left(1+|t|^{1+r}\right), \quad t \geq 0.
$$
According to \eqref{eq3.6}, we have
$$
\left\|v_n\right\|_{L^2\left(B_R(0)\right)} \log \left(\left\|v_n\right\|_{L^2\left(B_R(0)\right)}\right) \leq A\left(1+\left\|v_n\right\|_{L^2\left(B_R(0)\right)}^{1+r}\right)
$$
and
$$
\left.\left\|v_n\right\|_{L^2\left(B_R(0)\right)}^2 \log \left(\left\|v_n\right\|_{L^2\left(B_R(0)\right)}^2\right) \leq A\left(1+\left(\left\|v_n\right\|_{L^2\left(B_R(0)\right)}^2\right)^{1+r}\right) \leq \tilde{A}\left(1+\left\|v_n\right\|_{L^2\left(B_R(0)\right)}\right)^{1+r}\right) .
$$
Hence, modifying $A$ if necessary, it holds
$$
\int_{B_R(0)}\left|v_n\right|^2 \log \left|v_n\right|^2dx \leq A\left(1+\left\|v_n\right\|_{\lambda, R}^{1+r}\right) .
$$
As $\left(v_n\right)$ is $(P S)_c$ sequence,
\begin{eqnarray*}
(c+1) &\geq& I_{\lambda, R}\left(v_n\right)\\
&\geq&\frac{1}{2}\left\|v_n\right\|_{\lambda, R}^2+\int_{B_R(0)} F_1(v_n)dx-\int_{B_R(0)} G_2( x, v_n)dx\\
&=&\frac{1}{2}\left\|v_n\right\|_{\lambda, R}^2+\int_{B_R(0)\backslash\Omega_{\Gamma}^{\prime}} F_1\left(v_n\right)dx-\frac{1}{2}\int_{\Omega_{\Gamma}}\left|v_n\right|^2 \log \left|v_n\right|^2dx\\
&&-\int_{B_R(0)\backslash\Omega_{\Gamma}^{\prime}}  G_2\left(  x, v_n\right)dx
\end{eqnarray*}
for large $n$. By using the definition of $\widetilde{F}_2^{\prime}(s)$,
$$
G_2( x, t) \leq \frac{b_0}{2} t^2, \quad \forall x \in B_R(0)\backslash\Omega_{\Gamma}^{\prime},
$$
then
$$
(c+1)+A\left(1+\left\|v_n\right\|_{\lambda, R}^{1+r}\right) \geq C\left\|v_n\right\|_{\lambda, R}^2+\int_{B_R(0)\backslash\Omega_{\Gamma}^{\prime}} F_1\left(v_n\right)dx,
$$
for some $C>0$. Since $r \in(0,1)$, so $\left(v_n\right)$ is bounded in $E_{\lambda, R}$.
\end{proof}
\begin{lemma}\label{L3.3}
The functional $I_{\lambda, R}$ satisfies the $(PS)$ condition.
\end{lemma}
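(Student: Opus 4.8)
The plan is to use Szulkin's critical point theory for the decomposition $I_{\lambda,R}=\Phi+\Psi$, where $\Phi(u)=\frac12\|u\|_{\lambda,R}^2-\int_{B_R(0)}G_2(x,u)\,dx$ is of class $C^1$ (thanks to the subcritical growth of $g_2$ coming from $(f_2)$) and $\Psi(u)=\int_{B_R(0)}F_1(u)\,dx$ is convex, lower semicontinuous and nonnegative by $(f_1)$, possibly taking the value $+\infty$. Recall that a $(PS)_c$ sequence $(v_n)$ in this framework satisfies $I_{\lambda,R}(v_n)\to c$ together with the variational inequality
\begin{equation*}
\langle\Phi'(v_n),w-v_n\rangle+\Psi(w)-\Psi(v_n)\geq-\varepsilon_n\|w-v_n\|_{\lambda,R},\qquad\forall\,w\in E_{\lambda,R},
\end{equation*}
for some $\varepsilon_n\to0$. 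By Lemma \ref{L3.2} the sequence $(v_n)$ is bounded, so after passing to a subsequence $v_n\rightharpoonup v$ in $E_{\lambda,R}$; since $B_R(0)$ is bounded, the compact embedding $E_{\lambda,R}\hookrightarrow L^r(B_R(0),\mathbb{C})$ for $1\le r<2^*$ gives $v_n\to v$ in every such $L^r$ and a.e. on $B_R(0)$. Moreover, boundedness of $I_{\lambda,R}(v_n)$ and of $\int_{B_R(0)}G_2(x,v_n)\,dx$ (again subcritical growth) shows $\Psi(v_n)$ is bounded, and weak lower semicontinuity of $\Psi$ then yields $\Psi(v)\le\liminf_n\Psi(v_n)<\infty$, so that $v$ is an admissible test function.

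First I would insert $w=v$ into the variational inequality. The nonlinear contribution $\mathrm{Re}\int_{B_R(0)}g_2(x,v_n)\overline{(v-v_n)}\,dx$ tends to $0$: by $(f_2)$ and the definition of $\widetilde{F}_2^{\prime}$, $g_2(x,v_n)$ is bounded in the dual Lebesgue space, while $v-v_n\to0$ strongly in the corresponding $L^r$, so H\"older's inequality applies. Hence
\begin{equation*}
\langle\Phi'(v_n),v-v_n\rangle=\langle v_n,v-v_n\rangle_{\lambda,R}+o_n(1)=\|v\|_{\lambda,R}^2-\|v_n\|_{\lambda,R}^2+o_n(1),
\end{equation*}
using $\langle v_n,v\rangle_{\lambda,R}\to\|v\|_{\lambda,R}^2$ from weak convergence. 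Since $\varepsilon_n\|v-v_n\|_{\lambda,R}\to0$, the inequality becomes
\begin{equation*}
\|v\|_{\lambda,R}^2+\Psi(v)\geq\limsup_n\big(\|v_n\|_{\lambda,R}^2+\Psi(v_n)\big).
\end{equation*}
On the other hand, weak lower semicontinuity of $\|\cdot\|_{\lambda,R}^2$ and of $\Psi$ gives the reverse inequality for the $\liminf$, so $\|v_n\|_{\lambda,R}^2+\Psi(v_n)\to\|v\|_{\lambda,R}^2+\Psi(v)$.

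Finally I would separate the two weakly lower semicontinuous terms: along a subsequence realizing $\limsup_n\|v_n\|_{\lambda,R}^2$, convergence of the sum forces $\Psi(v_n)$ to converge to a value $\le\Psi(v)$, which by weak lower semicontinuity must equal $\Psi(v)$ and hence forces $\limsup_n\|v_n\|_{\lambda,R}^2=\|v\|_{\lambda,R}^2$; together with $\liminf_n\|v_n\|_{\lambda,R}^2\ge\|v\|_{\lambda,R}^2$ this yields $\|v_n\|_{\lambda,R}\to\|v\|_{\lambda,R}$. In the Hilbert space $E_{\lambda,R}$, weak convergence together with convergence of norms implies $v_n\to v$ strongly, which is precisely the $(PS)$ condition; passing to the limit in the variational inequality also shows $v$ is a critical point of $I_{\lambda,R}$. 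I expect the main obstacle to be the treatment of the nonsmooth, possibly infinite convex functional $\Psi$ carrying the logarithmic nonlinearity: since $\Psi$ cannot be differentiated, the argument must rely entirely on its convexity and weak lower semicontinuity (and on checking $\Psi(v)<\infty$ so that $w=v$ is admissible), rather than on a direct estimate of $\langle\Psi'(v_n),v-v_n\rangle$ as in the purely power-type case.
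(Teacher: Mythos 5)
Your proof is correct, but it takes a genuinely different route from the paper's. The paper exploits the fact that on the bounded ball $B_R(0)$ the functional is honestly $C^1$: both $G_2'$ and $F_1'$ obey subcritical polynomial bounds ($|G_2'(x,t)|\le C_1|t|+C_2|t|^{p-1}$, $|F_1'(t)|\le C(1+|t|^{p-1})$), so after invoking Lemma \ref{L3.2} and the compact embedding $E_{\lambda,R}\hookrightarrow L^t(B_R(0))$, $t<2^*$, the paper simply writes
\begin{equation*}
\|u_n-u\|_{\lambda,R}^2=\langle I_{\lambda,R}'(u_n)-I_{\lambda,R}'(u),u_n-u\rangle
+\mathrm{Re}\int_{B_R(0)}\bigl(G_2'(x,u_n)-G_2'(x,u)\bigr)\overline{u_n-u}\,dx
-\mathrm{Re}\int_{B_R(0)}\bigl(F_1'(u_n)-F_1'(u)\bigr)\overline{u_n-u}\,dx
\end{equation*}
and checks that every term is $o_n(1)$, giving strong convergence directly. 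You instead never differentiate $\Psi(u)=\int F_1(u)\,dx$, working entirely in Szulkin's nonsmooth framework with the variational inequality, convexity, and weak lower semicontinuity, and recovering norm convergence by the sum-splitting trick. Both arguments are sound; yours is heavier machinery than needed here (on $B_R(0)$ the functional $\Psi$ is finite everywhere and in fact $C^1$, since $F_1'$ has linear growth, so your worry about $\Psi(v)=+\infty$ is vacuous), but it is the argument that would survive on all of $\mathbb{R}^N$, where $\int F_1(u)\,dx$ can genuinely be infinite and the Szulkin framework cited by the paper becomes unavoidable. One point worth making explicit in your write-up: the paper's (PS) sequences are defined in the classical sense ($I_{\lambda,R}'(v_n)\to0$ in the dual), whereas you assume the Szulkin-type inequality; the two are compatible precisely because $\Psi$ is convex and differentiable here, so that $\Psi(w)-\Psi(v_n)\ge\langle\Psi'(v_n),w-v_n\rangle$ converts a classical (PS) sequence into one satisfying your inequality with $\varepsilon_n=\|I_{\lambda,R}'(v_n)\|$. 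With that bridge stated, your proof fully establishes the lemma as the paper intends it.
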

\begin{proof}
Assume that $\left(u_n\right) \subset E_{\lambda, R}$ is a $(\mathrm{PS})_c$ sequence for $I_{\lambda, R}$, that is,
$$
I_{\lambda, R}\left(u_n\right) \rightarrow c \quad \text { and } \quad I_{\lambda, R}^{\prime}\left(u_n\right) \rightarrow 0.
$$
By Lemma \ref{L3.2}, the sequence $\left(u_n\right)$ is bounded in $E_{\lambda, R}$. Without loss of generality, we always assume that there exist $u \in E_{\lambda, R}$ and a subsequence $\left(u_n\right)$ such that
\begin{eqnarray*}
u_n &\rightarrow u & \text { in } E_{\lambda, R}, \\
u_n &\rightarrow u & \text { in } L^t(B_R(0)),\ \forall t \in\left[1,2^*\right)\\
u_n &\rightarrow u  & \text { a.e. in } B_R(0) .
\end{eqnarray*}
Fix $p \in\left(2,2^*\right)$, we see that there exists $C_1,\ C_2>0$ such that
$$
\left|G_2^{\prime}(x, t)\right| \leq  C_1|t|+C_2|t|^{p-1},\ \forall t \in \mathbb{R}
$$
and
$$
\left|F_1^{\prime}(t)\right| \leq  C\left(1+|t|^{p-1}\right),\ \forall t \in \mathbb{R} .
$$
Therefore, by the Sobolev embedding theorem,
\begin{equation*}
  \int_{B_R(0)} G_2^{\prime}\left(x, u_n \right) \overline{u_n } d x  \rightarrow \int_{B_R(0)} G_2^{\prime}\left(x, u \right) \overline{u } d x ,
\end{equation*}
\begin{equation*}
 \int_{B_R(0)} F_1^{\prime}\left(u_n\right) \overline{u_n} d x  \rightarrow \int_{B_R(0)} F_1^{\prime}(u) \overline{u} d x  ,
\end{equation*}
\begin{equation*}
  \mathrm{Re}\left(\int_{B_R(0)} G_2^{\prime}\left(x, u_n \right) \overline{v} d x\right) \rightarrow \mathrm{Re}\left(\int_{B_R(0)} G_2^{\prime}\left(x, u \right) \overline{v} d x\right),
\end{equation*}
and
\begin{equation*}
  \mathrm{Re}\left(\int_{B_R(0)} F_1^{\prime}\left(u_n\right) \overline{v} d x\right) \rightarrow \mathrm{Re}\left(\int_{B_R(0)} F_1^{\prime}(u) \overline{v} d x\right)
\end{equation*}
for any $v \in E_{\lambda, R}$.
Now, using the fact that
\begin{eqnarray*}
&&\langle I_{\lambda, R}^{\prime}\left(u_n\right), u_n-u\rangle\\
&=&\mathrm{Re}\left( \int_{B_R(0)}\left(\nabla_A u_n\overline{\nabla_A (u_n-u)}+(\lambda V(x)+1)u_n\overline{u_n-u}\right) d x\right)\\
&&+\mathrm{Re}\left(\int_{B_R(0)} F'_1(u_n)\overline{u_n-u} d x\right)-\mathrm{Re}\left(\int_{B_R(0)} G'_2(x, u_n )\overline{u_n -u } d x\right)
\end{eqnarray*}
and
\begin{eqnarray*}
&&\langle I_{\lambda, R}^{\prime}\left(u\right), u_n-u\rangle\\
&=&\mathrm{Re}\left( \int_{B_R(0)}\left(\nabla_A u\overline{\nabla_A (u_n-u)}+(\lambda V(x)+1)u\overline{u_n-u}\right) d x\right)\\
&&+\mathrm{Re}\left(\int_{B_R(0)} F'_1(u)\overline{u_n-u} d x\right)-\mathrm{Re}\left(\int_{B_R(0)} G'_2(x, u )\overline{u_n -u } d x\right),
\end{eqnarray*}
so we obtain
\begin{eqnarray*}
\left\|u_n-u\right\|_{\lambda, R}^2&= & \mathrm{Re}\left(\int_{B_R(0)}\left(G_2^{\prime}\left(x, u_n \right)-G_2^{\prime}\left(x, u \right)\right)\overline{u_n -u } d x\right) \\
& &-\mathrm{Re}\left(\int_{B_R(0)}\left(F_1^{\prime}\left(u_n\right)-F_1^{\prime}(u)\right)\overline{u_n-u} d x\right)+o_n(1)=o_n(1).
\end{eqnarray*}
The proof is completed.
\end{proof}
\begin{lemma}\label{L3.4}
The problem \eqref{eq3.1} has a  nontrivial solution $u_{\lambda, R} \in E_{\lambda, R}$ such that $I_{\lambda, R}\left(u_{\lambda, R}\right)=c_{\lambda, R}$, where $c_{\lambda, R}$ denotes the mountain pass level associated with $I_{\lambda, R}$.
\end{lemma}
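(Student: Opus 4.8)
The plan is to read off Lemma \ref{L3.4} as a direct application of the Mountain Pass Theorem, since the three hypotheses it requires have all been secured. Recall that $I_{\lambda, R} \in C^1(E_{\lambda, R}, \mathbb{R})$, that Lemma \ref{L3.1} furnishes the mountain pass geometry --- a sphere $\|u\|_{\lambda, R} = \rho$ on which $I_{\lambda, R} \geq \alpha > 0 = I_{\lambda, R}(0)$, together with a point $e$ outside it at which $I_{\lambda, R}(e) < 0$ --- and that Lemma \ref{L3.3} provides the $(PS)$ condition. First I would fix $\lambda$ and $R$ large enough so that the minimax value $c_{\lambda, R} = \inf_{\gamma \in \Gamma_{\lambda, R}} \max_{t \in [0,1]} I_{\lambda, R}(\gamma(t))$ satisfies $c_{\lambda, R} \geq \alpha > 0$, as already recorded after Lemma \ref{L3.1}.

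Next, by the version of the Mountain Pass Theorem without the Palais--Smale condition (Theorem 2.9 in \cite{MW1996}) there is a sequence $(v_n) \subset E_{\lambda, R}$ with $I_{\lambda, R}(v_n) \to c_{\lambda, R}$ and $I_{\lambda, R}'(v_n) \to 0$, i.e. a $(PS)_{c_{\lambda, R}}$ sequence. Invoking Lemma \ref{L3.3}, this sequence has a subsequence converging strongly to some $u_{\lambda, R} \in E_{\lambda, R}$. The continuity of $I_{\lambda, R}$ and of $I_{\lambda, R}'$ on $E_{\lambda, R}$ --- which follows from the compact Sobolev embeddings on the bounded domain $B_R(0)$ together with the growth bounds on $F_1'$ and $G_2'$ coming from $(f_1)$--$(f_2)$ --- then lets me pass to the limit to obtain $I_{\lambda, R}(u_{\lambda, R}) = c_{\lambda, R}$ and $I_{\lambda, R}'(u_{\lambda, R}) = 0$.

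It remains to confirm that $u_{\lambda, R}$ is a nontrivial weak solution of \eqref{eq3.1}. Nontriviality is immediate, since $I_{\lambda, R}(u_{\lambda, R}) = c_{\lambda, R} \geq \alpha > 0 = I_{\lambda, R}(0)$ forces $u_{\lambda, R} \neq 0$. That $u_{\lambda, R}$ solves \eqref{eq3.1} weakly is exactly the statement $\langle I_{\lambda, R}'(u_{\lambda, R}), \phi \rangle = 0$ for all $\phi \in E_{\lambda, R}$: testing against $\phi$ and against $i\phi$ recovers both the real and imaginary parts of the Euler--Lagrange equation $-(\nabla + iA(x))^2 u + (\lambda V(x)+1)u = g_2(x,u) - F_1'(u)$, with the homogeneous Dirichlet condition encoded in the space $E_{\lambda, R}$. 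Since the genuinely delicate analysis --- boundedness of $(PS)$ sequences in Lemma \ref{L3.2} and the $(PS)$ condition in Lemma \ref{L3.3}, where the logarithmic term is tamed via the inequality \eqref{eq3.3} --- has already been done, I do not expect a substantial obstacle here; the only point deserving a moment's care is that the argument takes place in the complex-valued space $E_{\lambda, R}$ regarded as a real Hilbert space under $\langle \cdot, \cdot\rangle$, so that the scalar Mountain Pass Theorem applies verbatim.
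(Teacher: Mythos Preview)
Your proposal is correct and follows exactly the approach of the paper, which simply states that the result is an immediate consequence of Lemmas \ref{L3.1} and \ref{L3.3}. You have merely spelled out in detail what the paper leaves implicit: the mountain pass geometry plus the (PS) condition yield a critical point $u_{\lambda,R}$ at level $c_{\lambda,R}\geq\alpha>0$, hence nontrivial.
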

\begin{proof}
The existence of the nontrivial solution $u_{\lambda, R}$ is an immediate result of Lemmas \ref{L3.1} and \ref{L3.3}.
\end{proof}
Next, for each $R>0$, we will consider the behavior of a $(PS)_{\infty, R}$ sequence for $I_{\lambda, R}$, that is, a sequence $\left(u_n\right) \subset H_0^1\left(B_R(0)\right)$ satisfying
$$
\begin{aligned}
& u_n \in E_{\lambda_n, R} \text { and } \lambda_n \rightarrow \infty, \\
& I_{\lambda_n, R}\left(u_n\right) \rightarrow c,\ \left\|I_{\lambda_n, R}^{\prime}\left(u_n\right)\right\| \rightarrow 0.
\end{aligned}
$$

\begin{lemma}\label{L3.5}
Let $\left(u_n\right) \subset H_A^1\left(B_R(0)\right)$ be a $(\mathrm{PS})_{\infty, R}$ sequence. Then for some subsequence $\left(u_n\right)$, there exists $u \in H_A^1(B_R(0))$ such that
$$
u_n \rightharpoonup u \quad \text { in } H_A^1(B_R(0)) .
$$
Moreover,

$\mathrm{(i)}$ $\left\|u_n-u\right\|_{\lambda_n, R} \rightarrow 0$,
and hence,
$$
u_n \rightarrow u \quad \text { in } H_A^1(B_R(0)).
$$

$\mathrm{(ii)}$ $u \equiv 0$ in $B_R(0) \backslash \Omega_{\Gamma}$ and $u$ is a solution of
\begin{equation}\label{eq3.8}
  \begin{cases}-(\nabla+iA(x))^2u=|u|^{q-2}u+u \log |u|^2 & \text { in } \Omega_{\Gamma}, \\ u=0 & \text { on } \partial \Omega_{\Gamma}.\end{cases}
\end{equation}

$\mathrm{(iii)}$ $u_n$ also satisfies
$$
\begin{aligned}
& \lambda_n \int_{B_R(0)} V(x)\left|u_n\right|^2 d x \rightarrow 0, \\
& \left\|u_n\right\|_{\lambda_n, B_R(0) \backslash \Omega_{\Gamma}}^2 \rightarrow 0, \\
& \left\|u_n\right\|_{\lambda_n, \Omega_j^{\prime}}^2 \rightarrow \int_{\Omega_j}\left(|\nabla_A u|^2+|u|^2\right) d x \quad \text { for all } j \in \Gamma .
\end{aligned}
$$
\end{lemma}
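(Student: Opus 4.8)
The plan is to follow the deepening--potential--well scheme of \cite{YHDKT2003,COACJ2022}, converting the $(\mathrm{PS})_{\infty,R}$ information into concentration of the sequence via the penalization and the weak limit. First I would observe that the boundedness estimate of Lemma \ref{L3.2} is uniform in $\lambda$: every constant there (the Sobolev constant in \eqref{eq3.3}, the bound $G_2(x,t)\le\frac{b_0}{2}|t|^2$ valid on $B_R(0)\setminus\Omega_\Gamma'$, and the elementary inequality $|t\log t|\le A(1+|t|^{1+r})$) depends only on $R$ and the structural data, not on $\lambda$. Hence $\sup_n\|u_n\|_{\lambda_n,R}=:M<\infty$, and since $\|u_n\|_{H^1_{A,R}}\le\|u_n\|_{\lambda_n,R}$ a subsequence satisfies $u_n\rightharpoonup u$ in $H^1_A(B_R(0))$, $u_n\to u$ in $L^t(B_R(0))$ for every $t\in[1,2^*)$ and $u_n\to u$ a.e. From $\lambda_n\int_{B_R(0)}V|u_n|^2\,dx\le M^2$ and Fatou's lemma I obtain $\int_{B_R(0)}V|u|^2\,dx=0$, so $u\equiv0$ on $\{V>0\}=B_R(0)\setminus\overline\Omega$; in particular $u$ vanishes a.e. on each collar $\Omega_j'\setminus\overline{\Omega_j}$, where $V>0$.

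The core of the proof is the strong convergence (i). Testing $I'_{\lambda_n,R}(u_n)$ against $u_n-u$ gives $\langle I'_{\lambda_n,R}(u_n),u_n-u\rangle=o_n(1)$ because $\|I'_{\lambda_n,R}(u_n)\|\to0$ and $(u_n-u)$ is bounded. The decisive point is that $Vu=0$ makes the reference quantity $\mathrm{Re}\int_{B_R(0)}\big(\nabla_A u\,\overline{\nabla_A(u_n-u)}+u\,\overline{(u_n-u)}\big)\,dx$ independent of $\lambda_n$, and this tends to $0$ by weak convergence; consequently the quadratic part of $\langle I'_{\lambda_n,R}(u_n),u_n-u\rangle$ equals $\|u_n-u\|_{\lambda_n,R}^2+o_n(1)$. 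The two nonlinear integrals $\mathrm{Re}\int F_1'(u_n)\overline{(u_n-u)}\,dx$ and $\mathrm{Re}\int g_2(x,u_n)\overline{(u_n-u)}\,dx$ tend to $0$ exactly as in the proof of Lemma \ref{L3.3}, using the subcritical growth of $F_1'$ and $g_2$ together with the compact embeddings. This yields $\|u_n-u\|_{\lambda_n,R}\to0$, i.e. (i). The first identity in (iii) is then immediate: since $Vu=0$ gives $V|u_n-u|^2=V|u_n|^2$, we have $\lambda_n\int_{B_R(0)}V|u_n|^2\,dx\le\|u_n-u\|_{\lambda_n,R}^2\to0$.

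For (ii) I would argue componentwise. Fix $l\notin\Gamma$ and pass to the limit in $\langle I'_{\lambda_n,R}(u_n),\phi\rangle=o_n(1)$ for $\phi\in C_0^\infty(\Omega_l)$; there $V\equiv0$ and $g_2=\widetilde F_2'$, so $u$ is a weak solution of $-(\nabla+iA)^2u+u=\widetilde F_2'(u)-F_1'(u)$ on $\Omega_l$. Since $u$ vanishes a.e. on $\Omega_l'\setminus\overline{\Omega_l}$, a one--sided neighbourhood of $\partial\Omega_l$, its trace on $\partial\Omega_l$ is $0$, so $u|_{\Omega_l}\in H^{0,1}_A(\Omega_l,\mathbb{C})$ is an admissible test function; testing against $u$ and using $F_1'(s)s\ge0$ from $(f_1)$ and the penalization bound $\widetilde F_2'(s)s\le b_0|s|^2$ (which follows from $(f_3)$ and the matching condition $F_2'(a_0)+a_0^{q-1}=b_0a_0$) gives $\int_{\Omega_l}(|\nabla_A u|^2+|u|^2)\,dx\le b_0\int_{\Omega_l}|u|^2\,dx$; as $b_0\approx0^+<1$ this forces $u\equiv0$ on $\Omega_l$. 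Combined with $u\equiv0$ on $B_R(0)\setminus\overline\Omega$, this yields $u\equiv0$ on $B_R(0)\setminus\Omega_\Gamma$, and passing to the limit with test functions supported in $\Omega_\Gamma$ (where $V\equiv0$ and $g_2=F_2'$) shows that $u$ solves \eqref{eq3.8}. The remaining parts of (iii) are read off from (i) and (ii): on $B_R(0)\setminus\Omega_\Gamma$ one has $u\equiv0$, so $\|u_n\|_{\lambda_n,B_R(0)\setminus\Omega_\Gamma}=\|u_n-u\|_{\lambda_n,B_R(0)\setminus\Omega_\Gamma}\to0$; and on $\Omega_j'$ the identity $\|u\|_{\lambda_n,\Omega_j'}^2=\int_{\Omega_j}(|\nabla_A u|^2+|u|^2)\,dx$ (valid because $Vu=0$ and $u$ is supported in $\Omega_j$) together with $\|u_n-u\|_{\lambda_n,\Omega_j'}\to0$ gives the stated limit.

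I expect the main obstacle to be the strong--convergence step (i): one must gain control of the full $\lambda_n$--dependent norm even though $\lambda_n\to\infty$, and the only mechanism that makes this possible is the cancellation $Vu=0$, which decouples the reference term $\mathrm{Re}\int(\nabla_A u\,\overline{\nabla_A(u_n-u)}+u\,\overline{(u_n-u)})\,dx$ from $\lambda_n$. Ensuring that the logarithmic and penalized nonlinearities actually pass to the limit (not merely stay bounded), so that they contribute $o_n(1)$ in the expansion of $\langle I'_{\lambda_n,R}(u_n),u_n-u\rangle$, is the accompanying delicate point, and it is here that the decomposition $F_2-F_1$ and the growth control $(f_2)$ are essential.
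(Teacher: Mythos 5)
Your proposal is correct and follows essentially the same route as the paper's own proof: boundedness via Lemma \ref{L3.2}, Fatou's lemma to force $u\equiv 0$ on $\{V>0\}=B_R(0)\setminus\overline{\Omega}$, the cancellation $Vu=0$ to convert $\langle I_{\lambda_n,R}^{\prime}(u_n)-I_{\lambda_n,R}^{\prime}(u),u_n-u\rangle=o_n(1)$ into $\|u_n-u\|_{\lambda_n,R}\to 0$ exactly as in Lemma \ref{L3.3}, the penalization bound $\widetilde F_2^{\prime}(s)s\le b_0|s|^2$ together with $b_0<1$ to kill $u$ on each $\Omega_j$ with $j\notin\Gamma$, and then (iii) read off from (i)--(ii). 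The only differences are presentational: you justify explicitly two points the paper merely asserts, namely why $u|_{\Omega_j}$ is an admissible test function (vanishing trace, since $u=0$ a.e.\ on $\Omega_j^{\prime}\setminus\overline{\Omega_j}$) and why $\widetilde F_2^{\prime}(s)s\le b_0|s|^2$ follows from $(f_3)$ and the matching condition at $|s|=a_0$.
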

\begin{proof}
By using Lemma \ref{L3.2}, there exists $K>0$ satisfying
$$
\left\|u_n\right\|_{\lambda_n, R}^2 \leqslant K, \quad \forall n \in \mathbb{N} .
$$
Thus, $\left(u_n\right)$ is bounded in $H_A^1(B_R(0))$ and we can assume that for some $u \in H_A^1(B_R(0))$,
$$
u_n \rightharpoonup u \ \text {in}\ H_A^1\left(B_R(0)\right)
$$
and
$$
u_n(x) \rightarrow u(x) \text { a.e. in } B_R(0) .
$$
Fixing $C_m=\left\{x \in B_R(0): V(x) \geqslant \frac{1}{m}\right\}$, we have
$$
\int_{C_m}\left|u_n\right|^2 d x \leqslant \frac{m}{\lambda_n} \int_{B_R(0)} \lambda_n V(x)\left|u_n\right|^2 d x,
$$
so
$$
\int_{C_m}\left|u_n\right|^2 d x \leqslant \frac{m}{\lambda_n}\left\|u_n\right\|_{\lambda_n, R}^2.
$$
Using Fatou's lemma, it holds
$$
\int_{C_m}|u|^2 d x=0, \quad \forall m \in \mathbb{N}.
$$
Then $u(x)=0$ on $\bigcup\limits_{m=1}^{+\infty} C_m=B_R(0) \backslash \overline{\Omega}$, so $\left.u\right|_{\Omega_j} \in H_A^1\left(\Omega_j\right), j \in\{1, \ldots, k\}$. Next, we will prove (i)-(iii).

(i) Note that $u=0$ in $B_R(0) \backslash \overline{\Omega}$ and $\langle I_{\lambda, R}^{\prime}\left(u_n\right), u_n-u\rangle=\langle I_{\lambda, R}^{\prime}\left(u\right), u_n-u\rangle=o_n(1)$, similar to the proof of lemma \ref{L3.3}, it holds
$$
\int_{B_R(0)}\left(\left|\nabla_A(u_n-u)\right|^2+\left(\lambda_n V(x)+1\right)\left|u_n-u\right|^2\right) d x\rightarrow 0,
$$
so $u_n \rightarrow u$ in $H_A^1\left(B_R(0)\right)$.

(ii) On the one hand, recalling that $u \in H_A^1\left(B_R(0)\right)$ and $u=0$ in $B_R(0) \backslash \overline{\Omega}$, we get that $u \in H_A^1(\Omega)$ or $\left.u\right|_{\Omega_j}$ $\in H_A^1\left(\Omega_j\right)$ for $j=1, \ldots, k$. Moreover, $u_n \rightarrow u$ in $H_A^1\left(B_R(0)\right)$ combined with $\langle I_{\lambda_n, R}^{\prime}\left(u_n\right), \varphi\rangle \rightarrow 0$ as $n \rightarrow+\infty$ for each $\varphi \in C_0^{\infty}\left(\Omega_{\Gamma}\right)$ implies that
$$
\mathrm{Re}\left(\int_{\Omega_{\Gamma}}(\nabla_A u \overline{\nabla_A \varphi}+u \overline{\varphi}) d x+\int_{\Omega_{\Gamma}} F_1^{\prime}(u) \overline{\varphi} d x-\int_{\Omega_{\Gamma}} F_2^{\prime}\left(u\right) \overline{\varphi} d x\right)=0,
$$
so $\left.u\right|_{\Omega_{\Gamma}}$ is a solution for \eqref{eq3.8}.
On the other hand, for each $j \in\{1,2, \ldots, k\} \backslash \Gamma$, it follows that
$$
\int_{\Omega_j}\left(|\nabla_A u|^2+u^2\right) d x+\mathrm{Re}\left(\int_{\Omega_j} F_1^{\prime}(u) \overline{u} d x-\int_{\Omega_j} \widetilde{F}_2^{\prime}(u) \overline{u} d x\right)=0 .
$$
By the fact that $F_1^{\prime}(s) s \geqslant 0$ and $\widetilde{F}_2^{\prime}(s) s \leq  b_0 s^2$ for all $s \in \mathbb{R}^{+}$, we obtain that
$$
\int_{\Omega_j}\left(|\nabla_A u|^2+u^2\right) d x \leq \mathrm{Re}\left(\int_{\Omega_j} \widetilde{F}_2^{\prime}(u) \overline{u} d x\right) \leq  b_0 \int_{\Omega_j} u^2dx .
$$
Due to $b_0<1, u=0$ in $\Omega_j$ for $j \in\{1,2, \ldots, k\} \backslash \Gamma$ and $u \geq  0$ in $B_R(0)$, which implies (ii) holds.

In order to get (iii), it follows from (i) that
$$
\int_{B_R(0)} \lambda_n V(x)\left|u_n\right|^2 d x=\int_{B_R(0)} \lambda_n V(x)\left|u_n-u\right|^2 d x \leqslant C\left\|u_n-u\right\|_{\lambda_n, R}^2,
$$
so
\begin{equation}\label{eq3.9}
  \int_{B_R(0)} \lambda_n V(x)\left|u_n\right|^2 d x \rightarrow 0 \quad \text { as } n \rightarrow+\infty.
\end{equation}
Moreover, using (i) and (ii), it is obviously that
$$
\begin{aligned}
& \left\|u_n\right\|_{\lambda_n, B_R(0) \backslash \Omega_{\Gamma}}^2 \rightarrow 0, \\
& \left\|u_n\right\|_{\lambda_n, \Omega_j^{\prime}}^2 \rightarrow \int_{\Omega_j}\left(|\nabla_A u|^2+|u|^2\right) d x \quad \text { for all } j \in \Gamma .
\end{aligned}
$$
In fact, note that  $u=0$ in $B_R(0) \backslash \overline{\Omega}$ and $u=0$ in $\Omega_j$ for $j \in\{1,2, \ldots, k\} \backslash \Gamma$. Since $u_n \rightarrow u$ in $H_A^1(B_R(0))$, we have $\left\|u_n\right\|_{\lambda_n, B_R(0) \backslash \Omega_{\Gamma}}^2 \rightarrow 0$. Because of \eqref{eq3.9}, the last conclusion holds.
\end{proof}
With a few modifications in the arguments in the proof of Lemma \ref{L3.5} and using Lemma \ref{L3.2}, we also have the following result, which is useful in Section 3.

\begin{lemma}\label{L3.6}
Let $u_n \in E_{\lambda_n, R_n}$ be a $(\mathrm{PS})_{\infty, R_n}$ sequence with $R_n \rightarrow+\infty$, that is,
\begin{equation*}
  u_n \in E_{\lambda_n, R_n} \  \text {and}\ \lambda_n \rightarrow \infty,\  I_{\lambda_n, R_n}\left(u_n\right) \rightarrow c,\ \left\|I_{\lambda_n, R_n}^{\prime}\left(u_n\right)\right\| \rightarrow 0 .
\end{equation*}
Then for some subsequence, still denoted by $(u_n)$, there exists $u \in H_A^1(\mathbb{R}^N)$ such that
$$
u_n \rightharpoonup u \quad \text { in } H_A^1(\mathbb{R}^N).
$$
Moreover,

$\mathrm{(i)}$ $\left\|u_n-u\right\|_{\lambda_n, R_n} \rightarrow 0$, so
$$
u_n \rightarrow u \quad \text { in }  H_A^1(\mathbb{R}^N).
$$

$\mathrm{(ii)}$ $u \equiv 0$ in $\mathbb{R}^N \backslash \Omega_{\Gamma}$ and $u$ is a solution of
$$
\begin{cases}-(\nabla+iA(x))^2u=|u|^{q-2}u+u \log u^2 & \text { in } \Omega_{\Gamma}, \\ u=0 & \text { on } \partial \Omega_{\Gamma}\end{cases}
$$

$\mathrm{(iii)}$ $u_n$ also satisfies
$$
\begin{aligned}
& \lambda_n \int_{B_{R_n}(0)} V(x)\left|u_n\right|^2 d x \rightarrow 0, \\
& \left\|u_n\right\|_{\lambda_n, B_{R_n}(0) \backslash \Omega_{\Gamma}}^2 \rightarrow 0, \\
& \left\|u_n\right\|_{\lambda_n, \Omega_j^{\prime}}^2 \rightarrow \int_{\Omega_j}\left(|\nabla u|^2+|u|^2\right) d x \quad \text { for all } j \in \Gamma .
\end{aligned}
$$
\end{lemma}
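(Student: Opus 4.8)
The plan is to mirror the proof of Lemma \ref{L3.5}, replacing the fixed ball $B_R(0)$ by the expanding balls $B_{R_n}(0)$ and tracking the one place where the loss of a single ambient domain forces a purely local argument. First I would invoke Lemma \ref{L3.2}: the bound it produces depends only on the level $c$ and on the structural constants $A,B,C,r$, all independent of $\lambda$ and $R$, so $\|u_n\|_{\lambda_n,R_n}^2 \leq K$ uniformly in $n$. Extending each $u_n$ by zero outside $B_{R_n}(0)$ makes $(u_n)$ bounded in $H_A^1(\mathbb{R}^N)$, hence along a subsequence $u_n \rightharpoonup u$ in $H_A^1(\mathbb{R}^N)$ with $u_n \to u$ a.e. I would then localize the vanishing argument: for fixed $m$ set $C_m=\{x\in\mathbb{R}^N:V(x)\geq 1/m\}$, so that $\int_{C_m\cap B_{R_n}(0)}|u_n|^2 \leq \tfrac{m}{\lambda_n}\|u_n\|_{\lambda_n,R_n}^2 \leq mK/\lambda_n \to 0$. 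Since $R_n\to\infty$, every fixed ball $B_\rho(0)$ is eventually contained in $B_{R_n}(0)$, so the compact local embedding gives $u_n\to u$ in $L^2(C_m\cap B_\rho(0))$ and thus $\int_{C_m\cap B_\rho(0)}|u|^2=0$; letting $\rho\to\infty$ and taking the union over $m$ yields $u=0$ a.e. on $\mathbb{R}^N\setminus\overline{\Omega}$ (using $\overline{\Omega}=V^{-1}(0)$), whence $u|_{\Omega_j}\in H_A^1(\Omega_j)$.

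For part (i) I would test $I_{\lambda_n,R_n}'(u_n)-I_{\lambda_n,R_n}'(u)$ against $u_n-u$. The $(PS)$ property gives $\langle I_{\lambda_n,R_n}'(u_n),u_n-u\rangle=o_n(1)$, while, since $u$ is supported in $\overline{\Omega}$ where $V\equiv 0$, weak convergence together with local compactness sends $\langle I_{\lambda_n,R_n}'(u),u_n-u\rangle\to 0$ (the term $\lambda_n V u\overline{(u_n-u)}$ vanishes identically). This reduces matters to
$$
\|u_n-u\|_{\lambda_n,R_n}^2 = \mathrm{Re}\!\int (G_2'(x,u_n)-G_2'(x,u))\overline{(u_n-u)}\,dx - \mathrm{Re}\!\int(F_1'(u_n)-F_1'(u))\overline{(u_n-u)}\,dx + o_n(1).
$$
Convexity of $F_1$ from $(f_1)$ makes the second integral nonnegative, so it may be dropped from the upper bound. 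On the bounded set $\Omega_\Gamma'$ the integrand is $G_2'=F_2'$, which is subcritical by $(f_2)$, and local compactness sends the corresponding integral to zero; on $B_{R_n}(0)\setminus\Omega_\Gamma'$, where $u\equiv 0$ and $G_2'=\widetilde{F}_2'$, the penalization bound $\widetilde{F}_2'(s)s\leq b_0 s^2$ gives $\mathrm{Re}\int_{B_{R_n}(0)\setminus\Omega_\Gamma'}\widetilde{F}_2'(u_n)\overline{u_n}\leq b_0\|u_n-u\|_{\lambda_n,R_n}^2$. Hence $(1-b_0)\|u_n-u\|_{\lambda_n,R_n}^2\leq o_n(1)$, and $b_0<1$ forces strong convergence. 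This absorption step replaces the global compactness freely available in Lemma \ref{L3.5} and is the one genuinely new point.

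For part (ii) I would pass to the limit in $\langle I_{\lambda_n,R_n}'(u_n),\varphi\rangle\to 0$ for $\varphi\in C_0^\infty(\Omega_\Gamma)$, where $V\equiv 0$ and $g_2$ reduces to the unpenalized nonlinearity, to conclude that $u|_{\Omega_\Gamma}$ solves the stated Dirichlet problem; for $j\notin\Gamma$, testing the limiting identity on $\Omega_j$ with $u$ and using $F_1'(s)s\geq 0$ together with $\widetilde{F}_2'(s)s\leq b_0 s^2<s^2$ gives $\int_{\Omega_j}(|\nabla_A u|^2+|u|^2)\leq b_0\int_{\Omega_j}|u|^2$, forcing $u\equiv 0$ on $\Omega_j$. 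Finally, (iii) is read off from (i): since $Vu=0$ we have $\lambda_n\int V|u_n|^2=\lambda_n\int V|u_n-u|^2\leq C\|u_n-u\|_{\lambda_n,R_n}^2\to 0$, and the two norm limits follow from strong convergence and the vanishing of $u$ outside $\Omega_\Gamma$. The main obstacle throughout is the expanding domain, which removes global compactness; I expect the delicate point to be showing that all ``far-field'' contributions vanish, which the penalization constant $b_0<1$ and the coercivity produced by $\lambda_n V\to\infty$ are precisely designed to control.
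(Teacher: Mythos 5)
There is a genuine (though repairable) gap in your proof of part (i). Your absorption step asserts that on $B_{R_n}(0)\setminus\Omega_{\Gamma}^{\prime}$ one has $u\equiv 0$, but at that stage of your argument you only know $u=0$ off $\overline{\Omega}$; the set $B_{R_n}(0)\setminus\Omega_{\Gamma}^{\prime}$ also contains the components $\Omega_j$ with $j\notin\Gamma$, and the vanishing of $u$ there is established only later, in your part (ii). This cannot be waved away, because without $u=0$ the inequality you need fails pointwise: writing $\widetilde{F}_2^{\prime}(s)=h(|s|)s$ with $0\le h\le b_0$ nondecreasing and $h\equiv b_0$ on $[a_0,\infty)$, for real $b<a_0<a$ one computes
\begin{equation*}
b_0(a-b)^2-\bigl(h(a)a-h(b)b\bigr)(a-b)=(a-b)\bigl((b_0-h(a))a-(b_0-h(b))b\bigr)=-(a-b)(b_0-h(b))b<0,
\end{equation*}
so $\mathrm{Re}\bigl[(\widetilde{F}_2^{\prime}(u_n)-\widetilde{F}_2^{\prime}(u))\overline{(u_n-u)}\bigr]\le b_0|u_n-u|^2$ is false in general; you genuinely need the integrand to reduce to $\mathrm{Re}(\widetilde{F}_2^{\prime}(u_n)\overline{u_n})$. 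The fix uses only material you already wrote: the vanishing of $u$ on $\Omega_j$, $j\notin\Gamma$, needs nothing beyond the weak limit equation (obtained from $\langle I_{\lambda_n,R_n}^{\prime}(u_n),\varphi\rangle\to 0$, weak convergence, and local compactness, then tested with $u$ itself and $\widetilde{F}_2^{\prime}(s)\overline{s}\le b_0|s|^2$, $b_0<1$), not strong convergence. So prove that piece of (ii) \emph{first}, and only then run the absorption argument for (i); as ordered in your proposal, the argument is circular.

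Once reordered, your proof is correct, and it takes a genuinely different route from the paper's for the key step (i). The paper proves (i) by testing with a cut-off $\phi_R u_n$ to get a uniform tail estimate on $\mathbb{R}^N\setminus B_R(0)$, then passes to the limit in $\langle I_{\lambda_n,R_n}^{\prime}(u_n),u_n\rangle\to 0$, compares with the limit equation tested against $u$, and splits the resulting energy identity by weak lower semicontinuity and Fatou (using $F_1^{\prime}(t)\overline{t}\ge F_1(t)\ge 0$) to conclude norm convergence and $\lambda_n\int V|u_n|^2\,dx\to 0$. You instead test $I_{\lambda_n,R_n}^{\prime}(u_n)-I_{\lambda_n,R_n}^{\prime}(u)$ against $u_n-u$, drop the $F_1$-difference by gradient monotonicity of the convex $F_1$, kill the $\Omega_{\Gamma}^{\prime}$ contribution by compactness on a fixed bounded set, and absorb the entire far field via $\mathrm{Re}(\widetilde{F}_2^{\prime}(u_n)\overline{u_n})\le b_0|u_n|^2$ with $b_0<1$. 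This is essentially the strategy of the paper's own Lemmas~\ref{L3.3} and \ref{L3.5} adapted to expanding domains, and it buys you something: no tail estimate is needed at all, since the penalization bound is pointwise and global. The paper's route, by contrast, produces the tail estimate as a by-product (it is reused as Lemma~\ref{L6.1}). Your remaining imprecisions (whether the power term $|u|^{q-2}u$ is included in $g_2$ on $\Omega_{\Gamma}^{\prime}$, and zero boundary values on $\partial B_{R_n}(0)$ when extending $u_n$ by zero) mirror the paper's own sloppiness and are harmless.
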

\begin{proof}
First of all, the boundedness of $\left(I_{\lambda_n, R_n}\left(u_n\right)\right)$ implies that there exists $K>0$ satisfying
$$
\left\|u_n\right\|_{\lambda_n, R_n}^2 \leq  K, \quad \forall n \in \mathbb{N} .
$$
So we can assume that there exists $u \in H_A^1(\mathbb{R}^N)$ satisfying
$$
u_n \rightharpoonup u \quad \text { in } H_A^1(\mathbb{R}^N)
$$
and
$$
u_n(x) \rightarrow u(x) \text { a.e. in } \mathbb{R}^N,
$$
and $u(x)=0$ on $\mathbb{R}^N \backslash \overline{\Omega}$.

(i) Let $0<R<R_n$ and $\phi_R \in C^{\infty}(\mathbb{R}^N, \mathbb{R})$ such that
$$
\phi_R=0, \quad x \in B_{\frac{R}{2}}(0), \quad \phi_R=1, \quad x \in B_R^c(0), \quad 0 \leq  \phi_R \leq  1 \quad \text { and } \quad\left|\nabla \phi_R\right| \leq \frac{C}{R},
$$
where $C>0$ is a constant independent of $R$. As the sequence $(\|\phi_R u_n\|_{\lambda_n, R_n})$ is bounded, it follows that
$$
\langle I_{\lambda_n, R_n}^{\prime}(u_n), \phi_R u_n \rangle=o_n(1),
$$
that is,
\begin{eqnarray*}
&&\int_{\mathbb{R}^N}\left(\left|\nabla_A u_n\right|^2+\left(\lambda_n V(x)+1\right)\left|u_n\right|^2\right) \phi_R d x\\
&= & \mathrm{Re}\left(\int_{\Omega_{\Gamma}^{\prime}} F_2^{\prime}\left(u_n\right)\phi_R \overline{ u_n} d x +\int_{\mathbb{R}^N \backslash \Omega_{\Gamma}^{\prime}} \widetilde{F}_2^{\prime}\left(u_n\right) \phi_R\overline{ u_n} d x-\int_{\mathbb{R}^N} F_1^{\prime}\left(u_n\right)\phi_R \overline{ u_n} d x\right) \\
&& -\mathrm{Re}\left(\int_{\mathbb{R}^N} \overline{u_n} \nabla u_n \nabla \phi_R d x\right)+o_n(1) .
\end{eqnarray*}
Let $R>0$ such that $\Omega_{\Gamma}^{\prime} \subset B_{\frac{R}{2}}(0)$, using H\"older's inequality and the boundedness of the sequence $(\|u_n\|_{\lambda_n, R_n})$ in $\mathbb{R}$, we get
\begin{eqnarray*}
\int_{\mathbb{R}^N}\left(\left|\nabla_A u_n\right|^2+\left(\lambda_n V(x)+1\right)\left|u_n\right|^2\right)\phi_R d x &\leq&  b_0 \int_{\mathbb{R}^N}\left|u_n\right|^2 \phi_Rd x +\frac{C}{R}\left\|u_n\right\|_{\lambda_n, R_n}^2+o_n(1)\\
&\leq&\frac{1}{2}\int_{\mathbb{R}^N}\left(\lambda_n V(x)+1\right)\left|u_n\right|^2 \phi_Rd x +\frac{C}{R}+o_n(1) .
\end{eqnarray*}
Hence, fixing $\xi>0$ and passing to the limit, we have
$$
\limsup _{n \rightarrow \infty} \int_{\mathbb{R}^N \backslash B_R(0)}\left(\left|\nabla_A u_n\right|^2+\left(\lambda_n V(x)+1\right)\left|u_n\right|^2\right) d x \leqslant \frac{C}{R}<\xi
$$
for some $R$ sufficiently large.
Note that $G_2^{\prime}$ has a subcritical growth, it follows that
$$
\begin{aligned}
 \mathrm{Re}\left(\int_{\mathbb{R}^N} G_2^{\prime}\left(x, u_n\right)\overline{ w} d x\right) &\rightarrow \mathrm{Re}\left(\int_{\mathbb{R}^N} G_2^{\prime}\left(x, u\right) \overline{w} d x\right), \quad \forall w \in C_0^{\infty}(\mathbb{R}^N), \\
 \int_{\mathbb{R}^N} G_2^{\prime}\left(x, u_n\right) \overline{u_n} d x &\rightarrow \int_{\mathbb{R}^N} G_2^{\prime}\left(x, u\right) \overline{u} d x\
\end{aligned}
$$
and
$$
\int_{\mathbb{R}^N} G_2\left(x, u_n\right) d x \rightarrow \int_{\mathbb{R}^N} G_2\left(x, u\right) d x.
$$
Now, recalling that $\lim\limits_{n \rightarrow \infty} \langle I_{\lambda_n, R_n}^{\prime}\left(u_n\right), w\rangle=0$ for all $w \in C_0^{\infty}(\mathbb{R}^N)$ and $\left\|u_n\right\|_{\lambda_n, R_n}^2 \leqslant K, \forall n \in \mathbb{N}$, we deduce that
$$
\mathrm{Re}\left(\int_{\mathbb{R}^N}(\nabla_A u \overline{\nabla_A \omega}+u \overline{\omega}) d x+\int_{\mathbb{R}^N} F_1^{\prime}(u) \overline{\omega} d x\right)=\mathrm{Re}\left(\int_{\mathbb{R}^N} G_2^{\prime}\left(x, u\right) \overline{\omega} d x\right),
$$
so
$$
\int_{\mathbb{R}^N}\left(|\nabla_A u|^2+|u|^2\right) d x+\int_{\mathbb{R}^N} F_1^{\prime}(u) \overline{u} d x=\int_{\mathbb{R}^N} G_2^{\prime}\left(x, u\right)\overline{ u} d x .
$$
Together this equality and $\lim\limits_{n \rightarrow \infty} \langle I_{\lambda_n, R_n}^{\prime}\left(u_n\right), u_n\rangle=0$, that is,
\begin{eqnarray*}
&&\int_{\mathbb{R}^N}\left(\left|\nabla_A u_n\right|^2+\left(\lambda_n V(x)+1\right)\left|u_n\right|^2\right) d x+\int_{\mathbb{R}^N} F_1^{\prime}\left(u_n\right) \overline{u_n} d x\\
&=&\int_{\mathbb{R}^N} G_2^{\prime}\left(x, u_n\right) \overline{u_n} d x+o_n(1)
\end{eqnarray*}
leads to
\begin{eqnarray*}
&& \lim\limits_{n \rightarrow+\infty}\left(\int_{\mathbb{R}^N}\left(\left|\nabla_A u_n\right|^2+\left(\lambda_n V(x)+1\right)\left|u_n\right|^2\right) d x+ \int_{\mathbb{R}^N} F_1^{\prime}\left(u_n\right) \overline{u_n} d x \right) \\
& =&\int_{\mathbb{R}^N}\left(|\nabla_A u|^2+|u|^2\right) d x+ \int_{\mathbb{R}^N} F_1^{\prime}(u) \overline{u} d x ,
\end{eqnarray*}
up to subsequence if necessary, we have
$$
u_n \rightarrow u \quad \text { in } H_A^1(\mathbb{R}^N), \quad \lambda_n \int_{\mathbb{R}^N} V(x)\left|u_n\right|^2 d x \rightarrow 0
$$
and
$$
  F_1^{\prime}\left(u_n\right) \overline{u_n}  \rightarrow   F_1^{\prime}(u) \overline{u}  \quad \text { in } L^1(\mathbb{R}^N) .
$$
Since $F_1$ is convex, even and $F(0)=0$, we know that $F_1^{\prime}(t) \overline{t} \geq  F_1(t) \geq  0$ for all $t \in \mathbb{C}$. Hence, using Lebesgue dominated convergence theorem, it holds
$$
F_1\left(u_n\right) \rightarrow F_1(u)\quad \text { in } L^1(\mathbb{R}^N).
$$
Due to
$$
\left\|u_n-u\right\|_{\lambda_n, R_n}^2=\int_{\mathbb{R}^N}\left|\nabla_A u_n-\nabla_A u\right|^2 d x+\int_{\mathbb{R}^N}\left|u_n-u\right|^2 d x+\lambda_n \int_{\mathbb{R}^N} V(x)\left|u_n-u\right|^2 d x,
$$
it follows that
$$
\left\|u_n-u\right\|_{\lambda_n, R_n}^2 \rightarrow 0,
$$
which implies (i) holds. The proofs of (ii) and (iii) are similar to that of Lemma \ref{L3.5}, so we omit it.
\end{proof}

Next, we will investigate the boundedness outside $\Omega_{\Gamma}^{\prime}$ for the solutions of \eqref{eq3.1}.
\begin{lemma}\label{L3.7}
Assume $\left(u_{\lambda, R}\right)$ be a family of nontrivial solutions of \eqref{eq3.1} satisfying $\left(I_{\lambda, R}\left(u_{\lambda, R}\right)\right)$ is bounded in $\mathbb{R}$ for any $\lambda>0$ and $R>0$ large enough. Then there exists $K>0$ that does not depend on $\lambda>0$ and $R>0$, and $R^*>0$ such that
$$
\left\|u_{\lambda, R}\right\|_{\infty, R} \leq  K, \quad \forall \lambda>0   , \  R \geqslant R^*.
$$
\end{lemma}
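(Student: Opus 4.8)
The plan is to reduce the complex magnetic equation \eqref{eq3.1} to a scalar subcritical elliptic differential inequality for the modulus $v_{\lambda,R}:=|u_{\lambda,R}|$, and then to run a Moser iteration in \emph{local} form, whose constants depend only on a uniform $H^1$-bound of the solutions and hence on neither $\lambda$ nor $R$.

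First I would fix the uniform norm bound. Each $u_{\lambda,R}$, being a critical point of $I_{\lambda,R}$, is a constant $(PS)_{c}$ sequence with $c=I_{\lambda,R}(u_{\lambda,R})$ bounded by hypothesis; hence the argument of Lemma \ref{L3.2}, whose constants are independent of $\lambda$ and $R$ (the Sobolev embedding $E_{\lambda,R}\hookrightarrow L^t(B_R(0))$ having an $R$-independent constant via \eqref{eq2.1}), yields $M>0$ with $\|u_{\lambda,R}\|_{\lambda,R}\le M$ for all $\lambda>0$ and all $R\ge R^*$, where $R^*$ is chosen so that $\Omega_{\Gamma}'\subset B_{R^*}(0)$. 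By the diamagnetic inequality \eqref{eq2.1}, $v_{\lambda,R}\in H^{1}_{0}(B_R(0))$ with $\|v_{\lambda,R}\|_{L^{2^*}}\le S\|u_{\lambda,R}\|_{\lambda,R}\le SM$, $S$ being the $R$-independent Sobolev constant.

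Next I would pass to the modulus via Kato's inequality for the magnetic Laplacian, $\Delta|u|\ge \operatorname{Re}\!\big(\tfrac{\overline{u}}{|u|}(\nabla+iA)^2u\big)$, which holds weakly since $A\in L^2_{\mathrm{loc}}$. Inserting $-(\nabla+iA)^2u=g_2(x,u)-F_1'(u)-(\lambda V+1)u$ gives, weakly on $B_R(0)$,
\[
-\Delta v_{\lambda,R}+(\lambda V+1)v_{\lambda,R}\le \operatorname{Re}\!\Big(\tfrac{\overline{u}}{|u|}g_2(x,u)\Big)-\operatorname{Re}\!\Big(\tfrac{\overline{u}}{|u|}F_1'(u)\Big).
\]
Because $F_1'(s)s\ge0$ by $(f_1)$ the last term is nonpositive and drops, while $\lambda V\ge 0$ may be discarded. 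Using $(f_2)$ together with the definitions of $g_2$ and $\widetilde{F}_2'$ one has $|g_2(x,u)|\le C(|u|+|u|^{p-1}+|u|^{q-1})$, so that $-\Delta v_{\lambda,R}\le C(v_{\lambda,R}+v_{\lambda,R}^{\,s-1})$ weakly, with $v_{\lambda,R}=0$ on $\partial B_R(0)$ and $s:=\max\{p,q\}\in(2,2^*)$. Writing this as $-\Delta v_{\lambda,R}\le W\,v_{\lambda,R}$ with $W:=C(1+v_{\lambda,R}^{\,s-2})$ and extending $v_{\lambda,R}$ by zero to a global nonnegative $H^1(\mathbb{R}^N)$-subsolution, the subcriticality $s<2^*$ is precisely the inequality $\tfrac{2^*}{s-2}>\tfrac N2$, so $v_{\lambda,R}^{\,s-2}\in L^{t}$ for some $t>N/2$ with $\sup_{x_0}\|W\|_{L^t(B_1(x_0))}\le C\big(1+\|v_{\lambda,R}\|_{L^{2^*}}^{s-2}\big)\le C\big(1+(SM)^{s-2}\big)$, uniformly in $\lambda$ and $R$. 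The local De Giorgi--Nash--Moser estimate $\|v_{\lambda,R}\|_{L^\infty(B_{1/2}(x_0))}\le C\big(N,t,\|W\|_{L^t(B_1(x_0))}\big)\|v_{\lambda,R}\|_{L^{2^*}(B_1(x_0))}$ then holds with a constant independent of $x_0,\lambda,R$; covering $\mathbb{R}^N$ by unit balls gives $\|u_{\lambda,R}\|_{\infty,R}=\|v_{\lambda,R}\|_{L^\infty}\le K$ for all $\lambda>0$, $R\ge R^*$.

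I expect the real work to lie in the uniformity of the constants rather than in the scheme. The danger is that the Moser constant degenerates as $R\to\infty$ or $\lambda\to\infty$: passing to the \emph{local} estimate removes the $R$-dependence, since $W$ lies in $L^t$ on each unit ball with a bound controlled solely by the $\lambda,R$-independent quantity $\|v_{\lambda,R}\|_{L^{2^*}}\le SM$, and the subcriticality $p,q<2^*$ is exactly what forces $t>N/2$; the $\lambda$-dependence vanishes once the nonnegative term $\lambda V|u|^2$ is dropped from the left-hand side. The two points needing care are the weak justification of Kato's inequality for the magnetic Laplacian with $A\in L^2_{\mathrm{loc}}$ (via smooth approximation of $A$ and \eqref{eq2.1}) and the subsolution property of the zero-extension of $v_{\lambda,R}$, both of which are standard.
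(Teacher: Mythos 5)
Your proposal is correct, and it shares the paper's skeleton --- a uniform $H^1$ bound deduced from the energy bound (as in Lemma \ref{L3.2}), Kato's inequality to pass to the modulus $v=|u_{\lambda,R}|$, and a Moser-type scheme --- but the execution of the iteration and, more importantly, the way uniformity in $\lambda$ and $R$ is secured are genuinely different. The paper runs the iteration globally and explicitly: it tests the scalar inequality with the truncated powers $|z_{L,\lambda}|=|u_{L,\lambda}|^{2(\beta-1)}|u_{\lambda,R}|$, derives the reverse H\"older inequality \eqref{eq3.15}, and then, to obtain the $L^{2^*}$ starting bound, fixes sequences $\lambda_n, R_n\to\infty$, invokes the compactness result of Lemma \ref{L3.6} (so that $u_{\lambda_n,R_n}\to u$ in $H_A^1(\mathbb{R}^N)$), and cites the iteration lemma of \cite{COACJ12020}; strictly speaking this gives the bound along such sequences, and the full range ``$\forall\lambda>0$, $R\ge R^*$'' claimed in the statement only follows after an implicit contradiction argument (and Lemma \ref{L3.6} requires \emph{both} $\lambda_n\to\infty$ and $R_n\to\infty$, so bounded-$\lambda$ families are not literally covered). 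You instead get the uniform $H^1$, hence via the diamagnetic inequality \eqref{eq2.1} the uniform $L^{2^*}$, bound directly with constants independent of $\lambda$ and $R$, rewrite the scalar inequality as $-\Delta v\le Wv$ with $W=C(1+v^{s-2})$, observe that subcriticality $s<2^*$ is exactly the condition $2^*/(s-2)>N/2$, so that $\|W\|_{L^t(B_1(x_0))}$ with $t=2^*/(s-2)>N/2$ is bounded by $C\bigl(1+(SM)^{s-2}\bigr)$ uniformly in $x_0$, $\lambda$, $R$, and conclude by the local De Giorgi--Nash--Moser estimate plus a covering argument. This buys a bound that is genuinely uniform over all $\lambda>0$ and $R\ge R^*$, with no recourse to sequences or to Lemma \ref{L3.6}, and so matches the stated conclusion more directly; the price is that the local elliptic estimate is invoked as a black box where the paper's truncation argument is self-contained. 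The two points you flag as needing care --- the weak form of Kato's inequality for $A\in L^2_{\mathrm{loc}}$, and the fact that the zero extension of a nonnegative $H_0^1(B_R(0))$ subsolution is a subsolution on $\mathbb{R}^N$ (the boundary contribution has the favorable sign because $v\ge 0$ inside and $v=0$ on $\partial B_R(0)$; alternatively one can use boundary-local estimates for subsolutions with zero boundary data) --- are indeed standard, and the first is used by the paper as well without further justification.
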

\begin{proof}
Consider $\lambda>0, L>0$ and $\beta>1$, let
$$
\begin{aligned}
& |u_{L, \lambda}|:= \begin{cases}|u_{\lambda, R}|, & \text { if } |u_{\lambda, R}| \leq  L, \\
L, & \text { if } |u_{\lambda, R}| \geq  L,\end{cases} \\
& |z_{L, \lambda}|=|u_{L, \lambda}|^{2(\beta-1)} |u_{\lambda, R}| \text { and } \omega_{L, \lambda}=|u_{\lambda, R}| |u_{L, \lambda}|^{\beta-1} .
\end{aligned}
$$
Since $\left(u_{\lambda, R}\right)$ is a nontrivial solution to \eqref{eq3.1}, it follows that
\begin{equation*}
  -(\nabla+i A(x))^2 u_{\lambda, R}+(\lambda V(x)+1) u_{\lambda, R}=g_2(x, u_{\lambda, R} )-F_1^{\prime}(u_{\lambda, R}).
\end{equation*}
By using Kato's inequality
$$
\Delta\left|u_{\lambda, R}\right| \geq \operatorname{Re}\left(\frac{\overline{u_{\lambda, R}}}{\left|u_{\lambda, R}\right|}(\nabla+i A(x))^2 u_{\lambda, R}\right),
$$
we obtain that
$$
-\Delta\left|u_{\lambda, R}\right|-(\lambda V(x)+1)\left|u_{\lambda, R}\right| \leq g_2(x, |u_{\lambda, R}|)-F_1^{\prime}(|u_{\lambda, R}|), \quad x \in \mathbb{R}^N.
$$
Now, choosing $z_{L, \lambda}$ as a test function, it holds
\begin{eqnarray}\label{eq3.10}
&& \int_{B_R(0)} |u_{L, \lambda}|^{2(\beta-1)}\left|\nabla |u_{\lambda, R}|\right|^2 d x+2(\beta-1) \int_{B_R(0)} |u_{L, \lambda}|^{2 \beta-3} |u_{\lambda, R}| \nabla |u_{\lambda, R}| \nabla |u_{L, \lambda}| d x \nonumber\\
&& +\int_{B_R(0)}(\lambda V(x)+1) |u_{L, \lambda}|^{2(\beta-1)}\left|u_{\lambda, R}\right|^2 d x+\int_{B_R(0)} F_1^{\prime}\left(|u_{\lambda, R}|\right)| u_{L, \lambda}|^{2(\beta-1)}| u_{\lambda, R}| d x \nonumber\\
&\leq&\int_{B_R(0)} G_2^{\prime}\left(x, |u_{\lambda, R}|\right) |u_{L, \lambda}|^{2(\beta-1)} |u_{\lambda, R}| d x .
\end{eqnarray}
According to the definition of $G_2$,
\begin{equation}\label{eq3.11}
  G_2^{\prime}(x, t) \leqslant F_2^{\prime}(t) \leqslant C t^{p-1}, \quad \forall(x, t) \in \mathbb{R}^N \times \mathbb{R}^{+},\ p \in\left(2,2^*\right).
\end{equation}
Hence, by using \eqref{eq3.10} and \eqref{eq3.11},
\begin{eqnarray}\label{eq3.12}
\int_{B_R(0)}\left(\left|\nabla \omega_{L, \lambda}\right|^2+\left|\omega_{L, \lambda}\right|^2\right) d x &\leq&  C \int_{B_R(0)} |u_{\lambda, R}|^p |u_{L, \lambda}|^{2(\beta-1)} d x\nonumber\\
&=&C \int_{B_R(0)} |u_{\lambda, R}|^{p-2} |\omega_{L, \lambda}|^2 d x .
\end{eqnarray}
By H\"older inequality, it is easy to see that
\begin{equation}\label{eq3.13}
\int_{B_R(0)} |u_{\lambda, R}|^{p-2} |\omega_{L, \lambda}|^2 d x \leq  C \beta^2\left(\int_{B_R(0)} |u_{\lambda, R}|^p d x\right)^{\frac{p-2}{p}}\left(\int_{B_R(0)} |\omega_{L, \lambda}|^p d x\right)^{\frac{2}{p}} .
\end{equation}
Moreover, it follows from Sobolev inequality that
\begin{equation}\label{eq3.14}
\left(\int_{B_R(0)}\left|\omega_{L, \lambda}\right|^{2^*} d x\right)^{\frac{2}{2^*}} \leq  C \int_{B_R(0)}\left(\left|\nabla \omega_{L, \lambda}\right|^2+\left|\omega_{L, \lambda}\right|^2\right) d x .
\end{equation}
Combining \eqref{eq3.12}-\eqref{eq3.14}, it holds
$$
\left(\int_{B_R(0)}\left|\omega_{L, \lambda}\right|^{2^*} d x\right)^{\frac{2}{2^*}} \leqslant C \beta^2\left(\int_{B_R(0)} |u_\lambda|^{p \beta} d x\right)^{\frac{2}{p}} .
$$
Regarding the use of the Fatou's lemma for variable $L$, we know that
$$
\left(\int_{B_R(0)}\left|u_\lambda\right|^{2^* \beta} d x\right)^{\frac{2}{2^*}} \leq  C \beta^2\left(\int_{B_R(0)} |u_\lambda|^{p \beta} d x\right)^{\frac{2}{p}},
$$
so
\begin{equation}\label{eq3.15}
  \left(\int_{B_R(0)}\left|u_\lambda\right|^{2^* \beta} d x\right)^{\frac{1}{2^* \beta}} \leq  C^{\frac{1}{ \beta}} \beta^{\frac{1}{\beta}}\left(\int_{B_R(0)} |u_\lambda|^{p \beta} d x\right)^{\frac{1}{2 \beta}  } .
\end{equation}
Since $\left(I_{\lambda, R}\left(u_{\lambda, R}\right)\right)$ is bounded and $\left(u_{\lambda, R}\right)$ is a solution of \eqref{eq3.1}, similar to the Lemma \ref{L3.2}, there exists $C>0$ satisfying
$$
\|u_{\lambda, R}\|_{\lambda, R} \leq  C
$$
for $\lambda>0$ and $R>0$ large enough. Fixing any sequences $\lambda_n \rightarrow+\infty$ and $R_n \rightarrow+\infty$, it is easy to see that $\left(u_{\lambda_n, R_n}\right)$ satisfies the hypotheses from Lemma \ref{L3.6}, and then $u_{\lambda_n, R_n} \rightarrow u$ in $H_A^1(\mathbb{R}^N)$. Now, note that $2<p<2^*$ and $(\|u_{\lambda_n, R_n}\|_{L^{2^*}(\mathbb{R}^N)})$ is bounded in $\mathbb{R}$, a well-known iteration argument (see \cite[Lemma 3.10]{{COACJ12020}} and \eqref{eq3.15} imply that there exists a constant $K_1>0$ such that
$$
\|u_{\lambda_n, R_n}\|_{L^{\infty}\left(\mathbb{R}^N\right)} \leqslant K_1, \quad \forall n \in \mathbb{N} .
$$
The proof is completed.
\end{proof}
\begin{lemma}\label{L3.8}
Assume $\left(u_{\lambda, R}\right)$ be a family of nontrivial solutions of \eqref{eq3.1} with $I_{\lambda, R}\left(u_{\lambda, R}\right)$ bounded in $\mathbb{R}$ for any $\lambda>0$ and $R>0$ large enough. Then there exist $\lambda^{\prime}>0$ and $R^{\prime}>0$ satisfying
$$
\left\|u_{\lambda, R}\right\|_{\infty, B_R(0) \backslash \Omega_{\Gamma}^{\prime}} \leqslant a_0, \quad \forall \lambda \geqslant \lambda^{\prime}, \quad R \geqslant R^{\prime} .
$$
Moreover, $u_{\lambda, R}$ is a solution of the original problem \eqref{eq3.2} for any $\lambda \geq  \lambda^{\prime}$ and $R \geq  R^{\prime}$.
\end{lemma}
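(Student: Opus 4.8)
The plan is to argue by contradiction and to exploit that, outside the penalization region $\Omega_\Gamma'$, the modified nonlinearity is dominated by the linear term $b_0 s$ with $b_0<1$, so that $|u_{\lambda,R}|$ is a subsolution of a linear elliptic equation with a favourable zeroth-order sign. Combined with the fact (Lemma \ref{L3.6}) that the mass of $u_{\lambda,R}$ escaping $\Omega_\Gamma$ vanishes as $\lambda,R\to\infty$, a uniform local boundedness estimate will force the sup-norm outside $\Omega_\Gamma'$ below the threshold $a_0$.

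First I would record the pointwise bound on the penalized term: $\widetilde F_2'(t)\le b_0 t$ for every $t\ge 0$. For $t\ge a_0$ this is the defining equality, while for $0\le t\le a_0$ the monotonicity in $(f_3)$ of $s\mapsto F_2'(s)/s$ and of $t\mapsto t^{q-2}$ gives $\widetilde F_2'(t)/t=F_2'(t)/t+t^{q-2}\le F_2'(a_0)/a_0+a_0^{q-2}=b_0$. Suppose now the conclusion fails. Then there are sequences $\lambda_n\to\infty$ and $R_n\to\infty$ with
$$\|u_{\lambda_n,R_n}\|_{\infty,\,B_{R_n}(0)\setminus\Omega_\Gamma'}>a_0.$$
Write $u_n:=u_{\lambda_n,R_n}$ and $v_n:=|u_n|$. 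By Lemma \ref{L3.7} we have a uniform bound $\|u_n\|_\infty\le K$, and since $(u_n)$ is a $(PS)_{\infty,R_n}$ sequence with $R_n\to\infty$, Lemma \ref{L3.6} gives $u_n\to u$ in $H_A^1(\R^N)$ with $u\equiv0$ on $\R^N\setminus\Omega_\Gamma$ and, crucially, $\|u_n\|_{\lambda_n,\,B_{R_n}(0)\setminus\Omega_\Gamma}\to0$; in particular $\|v_n\|_{L^2(B_{R_n}(0)\setminus\overline{\Omega_\Gamma})}\to0$.

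Next I would linearize the inequality satisfied by $v_n$ on the open set $B_{R_n}(0)\setminus\overline{\Omega_\Gamma}$. By Kato's inequality (as in the proof of Lemma \ref{L3.7}), together with $F_1'\ge0$ and $\lambda_n V\ge0$,
$$-\Delta v_n+v_n\le g_2(x,v_n)\qquad\text{in } B_{R_n}(0)\setminus\overline{\Omega_\Gamma}.$$
On the buffer $\Omega_\Gamma'\setminus\overline{\Omega_\Gamma}$ one has $g_2(x,v_n)=F_2'(v_n)\le C v_n^{p-1}\le CK^{p-2}v_n$ by $(f_2)$ and $v_n\le K$, while on $B_{R_n}(0)\setminus\Omega_\Gamma'$ one has $g_2(x,v_n)=\widetilde F_2'(v_n)\le b_0 v_n$. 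Hence $g_2(x,v_n)\le C_0 v_n$ throughout $B_{R_n}(0)\setminus\overline{\Omega_\Gamma}$ with $C_0$ independent of $n$, so $v_n\ge0$ is a subsolution of $-\Delta w-C_0 w\le 0$ there. Because $\overline{\Omega_\Gamma}$ is compactly contained in $\Omega_\Gamma'$, the number $d_0:=\operatorname{dist}(\overline{\Omega_\Gamma},\partial\Omega_\Gamma')$ is positive, so every $\bar x\in B_{R_n}(0)\setminus\Omega_\Gamma'$ at distance at least $d_0/2$ from $\partial B_{R_n}(0)$ satisfies $B_{d_0/2}(\bar x)\subset B_{R_n}(0)\setminus\overline{\Omega_\Gamma}$. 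The De Giorgi–Nash–Moser local boundedness estimate for subsolutions then yields, with a constant depending only on $N$, $C_0$ and $d_0$ (hence uniform in $n$),
$$\sup_{B_{d_0/4}(\bar x)}v_n\le C\,\|v_n\|_{L^2(B_{d_0/2}(\bar x))}\le C\,\|v_n\|_{L^2(B_{R_n}(0)\setminus\overline{\Omega_\Gamma})}\to0.$$

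Covering $B_{R_n}(0)\setminus\Omega_\Gamma'$ by such balls gives $\|v_n\|_{\infty,\,B_{R_n}(0)\setminus\Omega_\Gamma'}\to0$, contradicting the standing assumption that this quantity exceeds $a_0$; the only remaining region, a thin layer near $\partial B_{R_n}(0)$, is handled by the Dirichlet condition $u_n=0$ on $\partial B_{R_n}(0)$ together with the boundary version of the same subsolution estimate. This produces $\lambda',R'$ with $\|u_{\lambda,R}\|_{\infty,B_R(0)\setminus\Omega_\Gamma'}\le a_0$ for all $\lambda\ge\lambda'$, $R\ge R'$. Finally, once this pointwise bound holds, on $B_R(0)\setminus\Omega_\Gamma'$ one has $g_2(x,u_{\lambda,R})=F_2'(u_{\lambda,R})+|u_{\lambda,R}|^{q-2}u_{\lambda,R}$, so by the observation preceding Lemma \ref{L3.1} the penalized nonlinearity agrees with the genuine one throughout $B_R(0)$ and $u_{\lambda,R}$ solves \eqref{eq3.2}. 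The main obstacle is the middle step: securing a local $L^\infty$ estimate with a constant uniform in $\lambda$ and $R$ and valid right up to $\partial\Omega_\Gamma'$, which is precisely why one linearizes $g_2$ on the enlarged set $B_R(0)\setminus\overline{\Omega_\Gamma}$ (using the uniform bound $K$ and the gap $d_0$) rather than working only on $B_R(0)\setminus\overline{\Omega_\Gamma'}$, where the subsolution inequality would not reach the boundary.
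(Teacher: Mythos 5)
Your argument is correct, but at the decisive step it takes a genuinely different route from the paper. Both proofs share the same skeleton: work along sequences $\lambda_n,R_n\to\infty$, use Lemma \ref{L3.6} to make the norm of $u_n$ outside $\Omega_{\Gamma}$ vanish, and close with the observation that once $|u_{\lambda,R}|\le a_0$ off $\Omega_{\Gamma}^{\prime}$ the penalized nonlinearity coincides with the original one, so \eqref{eq3.2} is solved. The difference is how the pointwise bound is extracted. The paper applies Moser iteration only in a fixed neighborhood $\mathcal{B}$ of $\partial\Omega_{\Gamma}^{\prime}$ to get a trace estimate $\|u_{\lambda,R}\|_{L^{\infty}(\partial\Omega_{\Gamma}^{\prime})}\le C\|u_{\lambda,R}\|_{L^{2^*}(\mathcal{B})}$ uniform in $\lambda$, lets Lemma \ref{L3.6} make the right-hand side small, and then runs a truncation (maximum-principle) argument: testing the equation with $(|u_{\lambda_n,R_n}|-a_0)^{+}$, extended by zero into $\Omega_{\Gamma}^{\prime}$, and using that on $\{|u|>a_0\}$ one has $\widetilde F_2^{\prime}(s)=b_0 s$, so that $(\lambda_n V+1)-\widetilde F_2^{\prime}(u)/u\ge 1-b_0>0$ forces $(|u_{\lambda_n,R_n}|-a_0)^{+}\equiv 0$ in the exterior. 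You instead linearize $g_2$ on all of $B_{R_n}(0)\setminus\overline{\Omega_{\Gamma}}$ — using Lemma \ref{L3.7} and $(f_2)$ in the buffer zone, and $\widetilde F_2^{\prime}(t)\le b_0 t$ outside $\Omega_{\Gamma}^{\prime}$, which you verify correctly from $(f_3)$ and the normalization $F_2^{\prime}(a_0)/a_0+a_0^{q-2}=b_0$ — and then apply De Giorgi--Nash--Moser local boundedness on a covering by balls of uniform radius, with the boundary version near $\partial B_{R_n}(0)$ where $v_n$ has zero trace. What each buys: the paper's truncation step needs neither the uniform $L^{\infty}$ bound of Lemma \ref{L3.7} nor any covering argument, and it delivers exactly the threshold $a_0$ as soon as the trace on $\partial\Omega_{\Gamma}^{\prime}$ drops below $a_0$; your route yields the stronger asymptotic conclusion $\|u_n\|_{\infty,\,B_{R_n}(0)\setminus\Omega_{\Gamma}^{\prime}}\to 0$ (so the contradiction with $>a_0$ is immediate), at the price of invoking Lemma \ref{L3.7} to control the buffer zone $\Omega_{\Gamma}^{\prime}\setminus\overline{\Omega_{\Gamma}}$, where $g_2=F_2^{\prime}$ is not dominated by $b_0 t$, and of the extra care in the boundary layer near $\partial B_{R_n}(0)$, which you correctly flag and which is standard (the zero extension of a nonnegative $H_0^1$ subsolution is again a subsolution, or one cites the boundary local boundedness estimate for subsolutions vanishing on a boundary portion).
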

\begin{proof}
Take $R_0>0$ large enough satisfying $\overline{\Omega_{\Gamma}^{\prime}} \subset B_{R_0}(0)$ and fix a neighborhood $\mathcal{B}$ of $\partial \Omega_{\Gamma}^{\prime}$ satisfying
$$
\mathcal{B} \subset B_{R_0}(0) \backslash \Omega_{\Gamma}
$$
By using Moser's iteration technique, there exists $C>0$ independent of $\lambda$ such that
$$
\|u_{\lambda, R}\|_{L^{\infty}(\partial \Omega_{\Gamma}^{\prime})} \leq  C\|u_{\lambda, R}\|_{L^{2^*}(\mathcal{B})}, \quad \forall R \geq  R_0 .
$$
Fixing two sequences $\lambda_n \rightarrow+\infty$ and $R_n \rightarrow+\infty$ and using Lemma \ref{L3.6}, it follows that $u_{\lambda_n, R_n} \rightarrow 0$ in $H_A^1(B_{R_n}(0) \backslash \Omega_{\Gamma})$ for some subsequence, and then $u_{\lambda_n, R_n} \rightarrow 0$ in $H_A^1(B_{R_0}(0) \backslash \Omega_{\Gamma})$, so
$$
\|u_{\lambda_n, R_n}\|_{L^{2^*}(\mathcal{B})} \rightarrow 0 \quad \text { as } n \rightarrow \infty .
$$
Hence, there exists an $n_0 \in \mathbb{N}$ such that
$$
\|u_{\lambda_n, R_n}\|_{L^{\infty}(\partial \Omega_{\Gamma}^{\prime})} \leqslant a_0, \quad \forall n \geqslant n_0 .
$$
Now, for $n \geqslant n_0$ we set $\widetilde{u}_{\lambda_n, R_n}: B_{R_n}(0) \backslash \Omega_{\Gamma}^{\prime} \rightarrow \mathbb{C}$ given by
$$
|\widetilde{u}_{\lambda_n, R_n}(x)|=\left(|u_{\lambda_n, R_n}(x)|-a_0\right)^{+} .
$$
Therefore, $\widetilde{u}_{\lambda_n, R_n}(x) \in H_A^1\left(B_{R_n}(0) \backslash \Omega_{\Gamma}^{\prime}\right)$. We will show that $\widetilde{u}_{\lambda_n, R_n}(x)=0$ in $B_{R_n}(0) \backslash \Omega_{\Gamma}^{\prime}$, because this can get that
$$
\|u_{\lambda_n, R_n}\|_{\infty, B_{R_n}(0) \backslash \Omega_{\Gamma}^{\prime}} \leq  a_0 .
$$
In fact, taking $\widetilde{u}_{\lambda_n, R_n}$ as a test function and extending $\widetilde{u}_{\lambda_n, R_n}(x)=0$ in $\Omega_{\Gamma}^{\prime}$, it holds
\begin{eqnarray*}
&&\mathrm{Re}\left(\int_{B_{R_n}(0) \backslash \Omega_{\Gamma}^{\prime}} \nabla_A u_{\lambda_n, R_n} \overline{\nabla_A \widetilde{u}_{\lambda_n, R_n}} d x+\int_{B_{R_n}(0) \backslash \Omega_{\Gamma}^{\prime}}\left(\lambda_n V(x)+1\right) u_{\lambda_n, R_n} \overline{\widetilde{u}_{\lambda_n, R_n}} d x\right)\\
&\leq& \mathrm{Re}\left( \int_{B_{R_n}(0) \backslash \Omega_{\Gamma}^{\prime}} \widetilde{F}_2^{\prime}\left(u_{\lambda_n, R_n}\right) \overline{\widetilde{u}_{\lambda_n, R_n}} d x\right) .
\end{eqnarray*}
Since
\begin{equation*}
  \int_{B_{R_n}(0) \backslash \Omega_{\Gamma}^{\prime}} \nabla_A u_{\lambda_n, R_n} \overline{\nabla_A \widetilde{u}_{\lambda_n, R_n} }d x=\int_{B_{R_n}(0) \backslash \Omega_{\Gamma}^{\prime}}\left|\nabla \widetilde{u}_{\lambda_n, R_n}\right|^2 d x,
\end{equation*}
 \begin{eqnarray*}
&&\mathrm{Re}\left(\int_{B_{R_n}(0) \backslash \Omega_{\Gamma}^{\prime}}\left(\lambda_n V(x)+1\right) u_{\lambda_n, R_n} \overline{\widetilde{u}_{\lambda_n, R_n}} d x\right)\\
&=&\mathrm{Re}\left(\int_{\left(B_{R_n}(0) \backslash \Omega_{\Gamma}^{\prime}\right)_{+}}\left(\lambda_n V(x)+1\right)\left(\widetilde{u}_{\lambda_n, R_n}+a_0\right) \overline{\widetilde{u}_{\lambda_n, R_n}} d x\right)
 \end{eqnarray*}
and
 \begin{eqnarray*}
&&\mathrm{Re}\left(\int_{B_{R_n}(0) \backslash \Omega_{\Gamma}^{\prime}} \widetilde{F}_2^{\prime}\left(u_{\lambda_n, R_n}\right) \overline{\widetilde{u}_{\lambda_n, R_n}} d x\right)\\
&=&\mathrm{Re}\left(\int_{\left(B_{R_n}(0) \backslash \Omega_{\Gamma}^{\prime}\right)_{+}} \frac{\widetilde{F}_2^{\prime}\left(u_{\lambda_n, R_n}\right)}{u_{\lambda_n, R_n}}\left(\widetilde{u}_{\lambda_n, R_n}+a_0\right) \overline{\widetilde{u}_{\lambda_n, R_n}} d x\right),
 \end{eqnarray*}
where
$$
\left(B_{R_n}(0) \backslash \Omega_{\Gamma}^{\prime}\right)_{+}=\left\{x \in B_{R_n}(0) \backslash \Omega_{\Gamma}^{\prime}: |u_{\lambda_n, R_n}(x)|>a_0\right\}.
$$
According to the above equalities, it holds
\begin{eqnarray*}
&&\mathrm{Re}\left(\int_{\left(B_{R_n}(0) \backslash \Omega_{\Gamma}^{\prime}\right)_{+}}\left(\left(\lambda_n V(x)+1\right)-\frac{\widetilde{F}_2^{\prime}\left(u_{\lambda_n, R_n}\right)}{u_{\lambda_n, R_n}}\right)\left(\widetilde{u}_{\lambda_n, R_n}+a_0\right)\overline{ \widetilde{u}_{\lambda_n, R_n} }d x\right)\\
&&+\int_{B_{R_n}(0) \backslash \Omega_{\Gamma}^{\prime}}\left|\nabla \widetilde{u}_{\lambda_n, R_n}\right|^2 d x\leq0 .
\end{eqnarray*}
According to the definition of $\widetilde{F}_2^{\prime}$, we know that
$$
\left(\lambda_n V(x)+1\right)-\frac{\widetilde{F}_2^{\prime}\left(u_{\lambda_n, R_n}\right)}{u_{\lambda_n, R_n}} \geq  1-b_0>0 \quad \text { in }\left(B_{R_n}(0) \backslash \Omega_{\Gamma}^{\prime}\right)_{+} .
$$
Thus, $\widetilde{u}_{\lambda_n, R_n}=0$ in $\left(B_{R_n}(0) \backslash \Omega_{\Gamma}^{\prime}\right)_{+}$and $B_{R_n}(0) \backslash \Omega_{\Gamma}^{\prime}$. Therfore, there exist $\lambda^{\prime}>0$ and $R^{\prime}>0$ satisfying
$$
\left\|u_{\lambda, R}\right\|_{\infty, B_R(0) \backslash \Omega_{\Gamma}^{\prime}} \leq  a_0, \quad \forall \lambda \geq \lambda^{\prime}, \quad R \geqslant R^{\prime}.
$$
The proof is completed.
\end{proof}

\section{Minimax level}
In this subsection, for any $\lambda>0$ and $j \in \Gamma$, let us denote by $\mathcal{I}_j: H_A^1(\Omega_j) \rightarrow \mathbb{R}$ and $\mathcal{I}_{\lambda, j}: H_A^1(\Omega_j^{\prime}) \rightarrow \mathbb{R}$ the functionals given by
$$
\begin{aligned}
& \mathcal{I}_j(u)=\frac{1}{2} \int_{\Omega_j}\left(|\nabla_A u|^2+|u|^2\right) d x+\frac{1}{q}\int_{\Omega_j}|u|^qdx-\frac{1}{2} \int_{\Omega_j} |u|^2 \log |u|^2 d x, \\
& \mathcal{I}_{\lambda, j}(u)=\frac{1}{2} \int_{\Omega_j^{\prime}}\left(|\nabla_A u|^2+(\lambda V(x)+1)|u|^2\right) d x+\frac{1}{q}\int_{\Omega_j}|u|^qdx-\frac{1}{2} \int_{\Omega_j^{\prime}} |u|^2 \log |u|^2 d x,
\end{aligned}
$$
which are energy functionals related to the following logarithmic equations
\begin{equation}\label{eq4.1}
  \begin{cases}-(\nabla+iA(x))^2u=|u|^{q-2}u+u \log |u|^2, & \text { in } \Omega_j \\ u=0, & \text { on } \partial \Omega_j\end{cases}
\end{equation}
and
\begin{equation}\label{eq4.2}
  \begin{cases}-(\nabla+iA(x))^2u+\lambda V(x) u=|u|^{q-2}u+u \log |u|^2, & \text { in } \Omega_j^{\prime} \\ \frac{\partial u}{\partial \eta}=0, & \text { on } \partial \Omega_j^{\prime}.\end{cases}
\end{equation}
It is obvious that $\mathcal{I}_j$ and $\mathcal{I}_{\lambda, j}$ satisfy the mountain pass geometry. Due to $\Omega_j$ and $\Omega_j^{\prime}$ are bounded, and $\mathcal{I}_j$ and $\mathcal{I}_{\lambda, j}$ satisfy the (PS) condition, Using the same arguments in Section 3, there exist two nontrivial functions $\omega_j \in H_A^1(\Omega_j)$ and $\omega_{\lambda, j} \in H_A^1(\Omega_j^{\prime})$ satisfying
$$
\mathcal{I}_j\left(\omega_j\right)=c_j, \quad \mathcal{I}_{\lambda, j}\left(\omega_{\lambda, j}\right)=c_{\lambda, j} \quad \text { and } \quad \mathcal{I}_j^{\prime}\left(\omega_j\right)=\mathcal{I}_{\lambda, j}^{\prime}\left(\omega_{\lambda, j}\right)=0
$$
where
$$
\begin{aligned}
& c_j=\inf _{\gamma \in \Upsilon_j} \max _{t \in[0,1]} I_j(\gamma(t)), \\
& c_{\lambda, j}=\inf _{\gamma \in \Upsilon_{\lambda, j}} \max _{t \in[0,1]} I_{\lambda, j}(\gamma(t))
\end{aligned}
$$
and
$$
\begin{aligned}
& \Upsilon_j=\left\{\gamma \in C\left([0,1], H_0^1\left(\Omega_j\right)\right): \gamma(0)=0 \text { and } I_j(\gamma(1))<0\right\}, \\
& \Upsilon_{\lambda, j}=\left\{\gamma \in C\left([0,1], H^1\left(\Omega_j^{\prime}\right)\right): \gamma(0)=0 \text { and } I_{\lambda, j}(\gamma(1))<0\right\}
\end{aligned}
$$
In fact, a simple computation gives
$$
\begin{aligned}
& c_j=\inf _{u \in \mathcal{N}_j} \mathcal{I}_j(u), \\
& c_{\lambda, j}=\inf _{u \in \mathcal{N}_j^{\prime}} \mathcal{I}_{\lambda, j}(u),
\end{aligned}
$$
where
$$
\mathcal{N}_j=\left\{u \in H_A^1\left(\Omega_j\right) \backslash\{0\}: \mathcal{I}_j^{\prime}(u) u=0\right\}
$$
and
$$
\mathcal{N}_j^{\prime}=\left\{u \in H_A^1\left(\Omega_j^{\prime}\right) \backslash\{0\}: \mathcal{I}_{\lambda, j}^{\prime}(u) u=0\right\} .
$$
In addition, by using a direct computation, there exists a $\tau>0$ satisfying if $u \in \mathcal{N}_j$  for any $j \in \Gamma$, then
\begin{equation}\label{eq4.3}
  \|u\|_j>\tau,
\end{equation}
where $\|\cdot\|_j$ is given as follows
$$
\|u\|_j^2=\int_{\Omega_j}\left(|\nabla_A u|^2+|u|^2\right) d x.
$$
Particularly, since $\omega_j \in \mathcal{N}_j$, we must have $\left\|\omega_{\lambda, j}\right\|_j>\tau$, where $\omega_{\lambda, j}=\left.\omega_j\right|_{\Omega_j}$ for all $j \in \Gamma$.

In the following, $c_{\Gamma}=\sum\limits_{j=1}^l c_j$ and $T>0$ is a constant large enough, which does not depend on $\lambda$ and $R>0$ large enough, such that
\begin{equation}\label{eq4.4}
  0<\langle \mathcal{I}_j^{\prime}(\frac{1}{T} \omega_j), \frac{1}{T} \omega_j\rangle, \quad \langle\mathcal{I}_j^{\prime}\left(T \omega_j\right), T \omega_j \rangle<0, \quad \forall j \in \Gamma .
\end{equation}
Therefore, according to the definition of $c_j$, we have
$$
\max _{s \in\left[1 / T^2, 1\right]} I_j\left(s T \omega_j\right)=c_j, \quad \forall j \in \Gamma .
$$
Without loss of generality, consider $\Gamma=\{1,2, \ldots, l\}$ with $l \leq  k$ and fix
$$
\begin{aligned}
& \gamma_0\left(s_1, s_2, \ldots, s_l\right)(x)=\sum_{j=1}^l s_j T \omega_j(x), \quad \forall\left(s_1, s_2, \ldots, s_l\right) \in\left[1 / T^2, 1\right]^l, \\
& \Gamma_*=\left\{\gamma \in C\left(\left[1 / T^2, 1\right]^l, E_{\lambda, R} \backslash\{0\}\right): \gamma=\gamma_0 \text { on } \partial\left(\left[1 / T^2, 1\right]^l\right)\right\}.
\end{aligned}
$$
and
$$
b_{\lambda, R, \Gamma}=\inf _{\gamma \in \Gamma_*} \max _{\left(s_1, s_2, \ldots, s_l\right) \in\left[1 / T^2, 1\right]^l} I_{\lambda, R}\left(\gamma\left(s_1, s_2, \ldots, s_l\right)\right) .
$$
Note that $\gamma_0 \in \Gamma_*$, then $\Gamma_* \neq \emptyset$ and $b_{\lambda, R, \Gamma}$ is well defined.

\begin{lemma}\label{L4.1}
For each $\gamma \in \Gamma_*$, there exists $\left(t_1, t_2, \ldots, t_l\right) \in\left[1 / T^2, 1\right]^l$ satisfying
$$
\mathcal{I}_{\lambda, j}^{\prime}\left(\gamma\left(t_1, \ldots, t_l\right)\right) \gamma\left(t_1, \ldots, t_l\right)=0 \quad \text { for } j \in\{1, \ldots, l\}.
$$
\end{lemma}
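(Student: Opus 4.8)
The plan is to reduce the simultaneous vanishing of the $l$ Nehari-type functionals to a single application of the Poincar\'e--Miranda theorem (the $l$-dimensional intermediate value theorem) on the cube $[1/T^2,1]^l$. To this end I would define the map $\Phi=(\Phi_1,\dots,\Phi_l)\colon [1/T^2,1]^l\to\mathbb{R}^l$ by
$$
\Phi_j(s_1,\dots,s_l):=\langle \mathcal{I}_{\lambda,j}'(\gamma(s)|_{\Omega_j'}),\,\gamma(s)|_{\Omega_j'}\rangle,\qquad s=(s_1,\dots,s_l),
$$
where $\gamma(s)|_{\Omega_j'}$ denotes the restriction of $\gamma(s)\in E_{\lambda,R}$ to $\Omega_j'$. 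A zero of $\Phi$ is exactly a point $(t_1,\dots,t_l)$ satisfying the conclusion of the lemma, so it suffices to produce such a zero.

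First I would verify that $\Phi$ is continuous. Since $\gamma\in C([1/T^2,1]^l,E_{\lambda,R})$ and the restriction map $E_{\lambda,R}\to H_A^1(\Omega_j')$ is continuous (because $\|u\|_{H^1_{A}(\Omega_j')}^2\le\|u\|_{\lambda,R}^2$ as $\lambda V\ge 0$), continuity of $\Phi_j$ follows once the Nehari functional $u\mapsto\langle\mathcal{I}_{\lambda,j}'(u),u\rangle$ is continuous on bounded subsets of $H_A^1(\Omega_j')$. The only delicate term is the logarithmic part $\int_{\Omega_j'}|u|^2\log|u|^2\,dx$; its continuity is handled exactly as in the boundedness and $(PS)$ arguments of Section 3, using the splitting $\frac12|t|^2\log|t|^2=F_2(t)-F_1(t)$ together with the growth bound $(f_2)$, the diamagnetic and Sobolev embeddings, and the logarithmic estimate \eqref{eq3.3}. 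As $[1/T^2,1]^l$ is compact, $\gamma$ has bounded image, so these estimates apply uniformly and $\Phi$ is continuous.

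Next I would analyze $\Phi$ on the boundary $\partial([1/T^2,1]^l)$, where $\gamma=\gamma_0$ by the definition of $\Gamma_*$. Because $\overline{\Omega_m'}\cap\overline{\Omega_j'}=\emptyset$ for $m\neq j$ and each $\omega_m$ is supported in $\overline{\Omega_m}$, the restriction reduces to $\gamma_0(s)|_{\Omega_j'}=s_jT\omega_j$. Moreover $V\equiv 0$ on $\overline{\Omega_j}\supset\operatorname{supp}\omega_j$, so $\mathcal{I}_{\lambda,j}$ and $\mathcal{I}_j$ coincide on $s_jT\omega_j$ and hence $\Phi_j(s)=\langle\mathcal{I}_j'(s_jT\omega_j),s_jT\omega_j\rangle$ on the relevant faces. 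Evaluating on the face $\{s_j=1/T^2\}$ gives $\Phi_j=\langle\mathcal{I}_j'(\tfrac1T\omega_j),\tfrac1T\omega_j\rangle>0$, while on the opposite face $\{s_j=1\}$ gives $\Phi_j=\langle\mathcal{I}_j'(T\omega_j),T\omega_j\rangle<0$, both by \eqref{eq4.4}.

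Finally, $\Phi$ is continuous on $[1/T^2,1]^l$ and, for each index $j$, its $j$-th component is strictly positive on the face $\{s_j=1/T^2\}$ and strictly negative on the face $\{s_j=1\}$. These are precisely the hypotheses of the Poincar\'e--Miranda theorem, which yields a point $(t_1,\dots,t_l)\in[1/T^2,1]^l$ with $\Phi(t_1,\dots,t_l)=0$, that is, $\mathcal{I}_{\lambda,j}'(\gamma(t_1,\dots,t_l))\gamma(t_1,\dots,t_l)=0$ for every $j\in\{1,\dots,l\}$. I expect the main obstacle to be the rigorous verification that the logarithmic Nehari functional is continuous on the (bounded) image of $\gamma$, since the $\log$ term is neither $C^1$ on all of $H_A^1$ nor of polynomial growth; the remaining boundary bookkeeping that produces the Miranda sign conditions is routine once the support and $V\equiv0$ reductions are recorded.
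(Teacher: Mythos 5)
Your proposal is correct and follows essentially the same route as the paper: both define the $\mathbb{R}^l$-valued map of Nehari-type functionals $s\mapsto\bigl(\langle\mathcal{I}_{\lambda,j}'(\gamma(s)),\gamma(s)\rangle\bigr)_{j=1}^l$, use that $\gamma=\gamma_0$ on $\partial\bigl([1/T^2,1]^l\bigr)$ together with the sign conditions \eqref{eq4.4} to get the opposite-face sign hypotheses, and conclude by Miranda's theorem. The only difference is that you spell out the continuity of the logarithmic Nehari functional and the support/$V\equiv0$ reduction on the boundary, details the paper leaves implicit.
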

\begin{proof}
Let $\gamma \in \Gamma_*$, we study the map $\widetilde{\gamma}:\left[1 / T^2, 1\right]^l \rightarrow \mathbb{R}^l$ defined as follows
$$
\widetilde{\gamma}\left(s_1, \ldots, s_l\right)=\left(I_{\lambda, 1}^{\prime}\left(\gamma\left(s_1, \ldots, s_l\right)\right) \gamma\left(s_1, \ldots, s_l\right), \ldots, I_{\lambda, l}^{\prime}\left(\gamma\left(s_1, \ldots, s_l\right)\right) \gamma\left(s_1, \ldots, s_l\right)\right) .
$$
For any $\left(s_1, \ldots, s_l\right) \in \partial\left(\left[1 / T^2, 1\right]^l\right)$, it holds
$$
\gamma\left(s_1, \ldots, s_l\right)=\gamma_0\left(s_1, \ldots, s_l\right)
$$
Hence, by using \eqref{eq4.4} and Miranda's theorem (see \cite{MC1940}), this completes the proof.
\end{proof}
\begin{lemma}\label{L4.2}

(a) For any $\lambda>0$ and $R>0$ large enough, $\sum\limits_{j=1}^l c_{\lambda, j} \leqslant b_{\lambda, R, \Gamma} \leqslant c_{\Gamma}$.

(b) For $\gamma \in \Gamma_*$ and $\left(s_1, \ldots, s_l\right) \in \partial\left(\left[1 / T^2, 1\right]^l\right)$,  then
$$
I_{\lambda, R}\left(\gamma\left(s_1, \ldots, s_l\right)\right)<c_{\Gamma}, \quad \forall \lambda>0
$$
\end{lemma}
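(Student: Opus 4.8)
The whole proof rests on a single structural observation. Because each $\omega_j$ is supported in $\Omega_j$, the closures $\overline{\Omega_j}$ are pairwise disjoint, $V\equiv 0$ on $\Omega$, and $\chi_\Gamma\equiv 1$ on $\Omega_\Gamma^{\prime}$, the penalized functional $I_{\lambda,R}$ decouples along these regions. Concretely, for every $(s_1,\dots,s_l)\in[1/T^2,1]^l$ the functions $s_jT\omega_j$ have mutually disjoint supports, so all cross terms vanish and
\[
I_{\lambda,R}\Bigl(\sum_{j=1}^l s_jT\omega_j\Bigr)=\sum_{j=1}^l\mathcal{I}_j(s_jT\omega_j),
\]
since on each $\Omega_j$ the potential term $\lambda V$ disappears and $g_2(x,\cdot)$ reduces to the full, unpenalized nonlinearity, making the restriction of $I_{\lambda,R}$ to $\Omega_j$ exactly $\mathcal{I}_j$. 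The plan is to combine this identity with the scalar fact, recorded just before Lemma \ref{L4.1}, that $\max_{s\in[1/T^2,1]}\mathcal{I}_j(sT\omega_j)=c_j$, together with the sign conditions \eqref{eq4.4}.

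I would prove part (b) and the upper bound in (a) first, as they follow directly from this decoupling. For (b), on $\partial([1/T^2,1]^l)$ one has $\gamma=\gamma_0$, hence $I_{\lambda,R}(\gamma(s_1,\dots,s_l))=\sum_j\mathcal{I}_j(s_jT\omega_j)$. The one-variable map $s\mapsto\mathcal{I}_j(sT\omega_j)$ has derivative $\tfrac1s\langle\mathcal{I}_j^{\prime}(sT\omega_j),sT\omega_j\rangle$, which by \eqref{eq4.4} is strictly positive at $s=1/T^2$ and strictly negative at $s=1$; therefore its maximum on $[1/T^2,1]$ is attained at an interior point and strictly exceeds both endpoint values. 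A boundary point carries at least one coordinate $s_{j_0}\in\{1/T^2,1\}$, so $\mathcal{I}_{j_0}(s_{j_0}T\omega_{j_0})<c_{j_0}$ while $\mathcal{I}_j(s_jT\omega_j)\le c_j$ for the remaining indices; summing yields $I_{\lambda,R}(\gamma(s_1,\dots,s_l))<c_\Gamma$. For the upper bound in (a), note that $\gamma_0\in\Gamma_*$ and that the maximum of a sum of functions in independent variables is the sum of the maxima, so $\max I_{\lambda,R}(\gamma_0)=\sum_j c_j=c_\Gamma$, whence $b_{\lambda,R,\Gamma}\le c_\Gamma$.

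For the lower bound $\sum_j c_{\lambda,j}\le b_{\lambda,R,\Gamma}$, I would fix an arbitrary $\gamma\in\Gamma_*$ and invoke Lemma \ref{L4.1} to obtain $(t_1,\dots,t_l)$ with $\mathcal{I}_{\lambda,j}^{\prime}(\gamma(t))\gamma(t)=0$ for every $j$; equivalently $\gamma(t)|_{\Omega_j^{\prime}}\in\mathcal{N}_j^{\prime}$, so that $\mathcal{I}_{\lambda,j}(\gamma(t)|_{\Omega_j^{\prime}})\ge c_{\lambda,j}$. Splitting $I_{\lambda,R}(\gamma(t))$ into its contributions over $\Omega_1^{\prime},\dots,\Omega_l^{\prime}$ and over the exterior $B_R(0)\setminus\Omega_\Gamma^{\prime}$, the piece over each $\Omega_j^{\prime}$ is exactly $\mathcal{I}_{\lambda,j}(\gamma(t)|_{\Omega_j^{\prime}})$ (again because $\chi_\Gamma\equiv1$ there), while the exterior piece is nonnegative: using $F_1\ge0$, the bound $G_2(x,t)\le\tfrac{b_0}{2}|t|^2$ valid on $B_R(0)\setminus\Omega_\Gamma^{\prime}$, and Lemma \ref{L2.1}, it is at least $\tfrac{a_0}{2}\|\gamma(t)\|_{\lambda,B_R(0)\setminus\Omega_\Gamma^{\prime}}^2\ge0$. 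Hence $I_{\lambda,R}(\gamma(t))\ge\sum_j c_{\lambda,j}$, and maximizing over the parameter cube and then taking the infimum over $\gamma\in\Gamma_*$ gives the claim.

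The main obstacle is establishing the two decomposition identities rigorously and, above all, the nonnegativity of the exterior contribution with constants $a_0,b_0$ independent of $\lambda$ and $R$; this is precisely where the calibration of the penalization $\widetilde{F}_2^{\prime}$ (its continuity at $|s|=a_0$ and the resulting bound $G_2(x,t)\le\tfrac{b_0}{2}|t|^2$ outside $\Omega_\Gamma^{\prime}$) together with Lemma \ref{L2.1} are essential. A secondary point requiring care is the passage from the vanishing condition $\mathcal{I}_{\lambda,j}^{\prime}(\gamma(t))\gamma(t)=0$ supplied by Lemma \ref{L4.1} to membership in the Nehari set $\mathcal{N}_j^{\prime}$, which is what delivers the lower bound $\mathcal{I}_{\lambda,j}(\cdot)\ge c_{\lambda,j}$ driving the whole estimate.
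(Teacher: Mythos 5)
Your proposal is correct and takes essentially the same route as the paper: the paper's ``proof'' is a one-line citation of \cite[Proposition 4.2]{COA2006}, and your argument --- decoupling $I_{\lambda,R}\circ\gamma_0$ along the disjoint supports of the $\omega_j$ (where $V\equiv 0$ and the penalization is inactive), the endpoint sign conditions \eqref{eq4.4} giving part (b) and the upper bound, and Lemma \ref{L4.1} combined with the Nehari characterization of $c_{\lambda,j}$ and the nonnegativity of the penalized exterior term (via $G_2(x,t)\le\tfrac{b_0}{2}|t|^2$ outside $\Omega_\Gamma^{\prime}$ and Lemma \ref{L2.1}) giving the lower bound --- is precisely that standard argument. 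The one soft spot, which you yourself flag, is the passage from $\mathcal{I}_{\lambda,j}^{\prime}(\gamma(t))\gamma(t)=0$ to $\gamma(t)|_{\Omega_j^{\prime}}\in\mathcal{N}_j^{\prime}$, which tacitly requires $\gamma(t)|_{\Omega_j^{\prime}}\neq 0$ (membership in $\Gamma_*$ only forbids $\gamma(t)=0$ globally); the cited source glosses this too, and it can be patched, e.g., by observing that $\mathcal{I}_{\lambda,j}^{\prime}(u)u\ge \|u\|_{\lambda,\Omega_j^{\prime}}^2-C\|u\|_{\lambda,\Omega_j^{\prime}}^{p}-C^{\prime}\|u\|_{\lambda,\Omega_j^{\prime}}^{q}>0$ for small nonzero restrictions, so that zeros of the Miranda map with vanishing restriction can be excluded after a harmless modification of the map.
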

\begin{proof}
The proof of the lemma is the same as that of \cite[Proposition 4.2]{{COA2006}}.
\end{proof}
\begin{lemma}\label{L4.3}
(a) $b_{\lambda, R, \Gamma}$ is a critical value of $I_{\lambda, R}$ for $\lambda>0$ and $R>0$ large enough.

(b) $b_{\lambda, R, \Gamma} \rightarrow c_{\Gamma}$, when $\lambda \rightarrow+\infty$ uniformly for $R>0$ large enough.
\end{lemma}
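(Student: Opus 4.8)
The plan is to prove part (b) first, since the convergence $b_{\lambda,R,\Gamma}\to c_\Gamma$ supplies the level separation needed to run the deformation argument in part (a). The crucial preliminary is a single-bump concentration fact: for each $j\in\Gamma$ one has $c_{\lambda,j}\to c_j$ as $\lambda\to+\infty$, independently of $R$. The bound $c_{\lambda,j}\le c_j$ is immediate, because any function in $H_A^1(\Omega_j)$ supported in $\Omega_j$ (where $V\equiv 0$) extends by zero to $\Omega_j^{\prime}$, the extension satisfies $\mathcal{I}_{\lambda,j}=\mathcal{I}_j$ on it, and its Nehari constraint is unchanged; hence $\mathcal{N}_j\hookrightarrow\mathcal{N}_j^{\prime}$ and $c_{\lambda,j}=\inf_{\mathcal{N}_j^{\prime}}\mathcal{I}_{\lambda,j}\le c_j$. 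For the reverse inequality $\liminf_{\lambda\to\infty}c_{\lambda,j}\ge c_j$, I would take ground states realizing $c_{\lambda,j}$ on $\Omega_j^{\prime}$; by the boundedness argument of Lemma \ref{L3.2} they are bounded in $H_A^1(\Omega_j^{\prime})$, and the penalization term $\lambda\int_{\Omega_j^{\prime}}V|u|^2$ forces, exactly as in Lemma \ref{L3.5}, concentration on $\Omega_j$ and strong convergence to a nontrivial solution of \eqref{eq4.1}, whose energy is at least $c_j$. This yields $c_{\lambda,j}\to c_j$.

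Granting this, part (b) is a squeeze. By Lemma \ref{L4.2}(a), for all $\lambda>0$ and $R$ large one has $\sum_{j=1}^l c_{\lambda,j}\le b_{\lambda,R,\Gamma}\le c_\Gamma=\sum_{j=1}^l c_j$. Since both the lower bound $\sum_j c_{\lambda,j}$ and the upper bound $c_\Gamma$ are independent of $R$ and $\sum_j c_{\lambda,j}\to c_\Gamma$, the convergence $b_{\lambda,R,\Gamma}\to c_\Gamma$ holds and is uniform in $R$ large. This settles (b).

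For part (a), I would argue by contradiction with the quantitative deformation lemma, available since $I_{\lambda,R}$ satisfies the $(PS)$ condition by Lemma \ref{L3.3}. First I record the boundary estimate: as the $\omega_j$ have pairwise disjoint supports contained in the $\Omega_j$ and $V\equiv0$ there, $I_{\lambda,R}(\gamma_0(s))=\sum_{j=1}^l\mathcal{I}_j(s_jT\omega_j)$, and by \eqref{eq4.4} each fibering map $s_j\mapsto\mathcal{I}_j(s_jT\omega_j)$ attains its maximum $c_j$ at an interior point and is strictly smaller at the endpoints $s_j\in\{1/T^2,1\}$. By compactness there is $\delta_0>0$ with $\max_{s\in\partial([1/T^2,1]^l)}I_{\lambda,R}(\gamma_0(s))\le c_\Gamma-\delta_0$. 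Combining with $b_{\lambda,R,\Gamma}\ge\sum_j c_{\lambda,j}\to c_\Gamma$, for $\lambda$ large enough we get $b_{\lambda,R,\Gamma}>c_\Gamma-\delta_0\ge\max_{\partial}I_{\lambda,R}(\gamma_0)$, so the boundary values lie strictly below the minimax level.

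Now suppose $b:=b_{\lambda,R,\Gamma}$ is not a critical value. Choosing $\bar\varepsilon>0$ so small that $b-2\bar\varepsilon$ still exceeds $\max_\partial I_{\lambda,R}(\gamma_0)$, the deformation lemma yields $\varepsilon\in(0,\bar\varepsilon)$ and a continuous map $\eta(1,\cdot)$ sending $\{I_{\lambda,R}\le b+\varepsilon\}$ into $\{I_{\lambda,R}\le b-\varepsilon\}$ and fixing every point with $I_{\lambda,R}\le b-\bar\varepsilon$. Taking $\gamma\in\Gamma_*$ with $\max I_{\lambda,R}(\gamma)\le b+\varepsilon$ and setting $\widetilde\gamma=\eta(1,\gamma(\cdot))$, the boundary identity $\widetilde\gamma=\gamma_0$ on $\partial([1/T^2,1]^l)$ is preserved because those points are fixed, and $\widetilde\gamma$ stays in $E_{\lambda,R}\setminus\{0\}$ since $b-\varepsilon>0=I_{\lambda,R}(0)$; thus $\widetilde\gamma\in\Gamma_*$ while $\max I_{\lambda,R}(\widetilde\gamma)\le b-\varepsilon$, contradicting the definition of $b$. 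Hence $b_{\lambda,R,\Gamma}$ is a critical value. I expect the main obstacle to be the concentration claim $c_{\lambda,j}\to c_j$, as everything else is a standard minimax/deformation scheme; that step requires transplanting the penalization analysis of Lemmas \ref{L3.2} and \ref{L3.5} to the Neumann problem \eqref{eq4.2} on $\Omega_j^{\prime}$.
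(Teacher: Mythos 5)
Your proposal is correct and takes essentially the same route as the paper, whose own ``proof'' is just a citation to \cite[Corollary 4.3]{COA2006}: that argument, like yours, obtains (b) from the sandwich $\sum_{j=1}^{l} c_{\lambda,j}\le b_{\lambda,R,\Gamma}\le c_{\Gamma}$ of Lemma \ref{L4.2}(a) combined with the convergence $c_{\lambda,j}\to c_j$ (proved by transplanting the $(PS)_\infty$/penalization analysis of Lemmas \ref{L3.2} and \ref{L3.5} to the Neumann problem \eqref{eq4.2}), and then (a) from the $(PS)$ condition (Lemma \ref{L3.3}), the strict boundary estimate for $\gamma_0$, and the standard deformation argument. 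I see no genuine gap; the only points left terse (nontriviality of the limit of the ground states $\omega_{\lambda,j}$, and the fact that the deformed path avoids $0$ because moved points keep $I_{\lambda,R}\ge b-2\bar\varepsilon>0$ while the rest are fixed) are standard and fixable exactly as you indicate.
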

\begin{proof}
The proof of the corollary is similar to that of \cite[Corollary 4.3]{{COA2006}}.
\end{proof}
\section{Uniform estimate}
Now, define $I_{\lambda, R}^{c_{\Gamma}}$ and $\Theta$ as follows:
$$
I_{\lambda, R}^{c_{\Gamma}}:=\left\{u \in E_{\lambda, R}: I_{\lambda, R}(u) \leqslant c_{\Gamma}\right\}
$$
and
$$
\Theta:=\left\{u \in E_{\lambda, R}:\|u\|_{\lambda, \Omega_j^{\prime}}>\frac{\tau}{2 T}, \forall j \in \Gamma\right\},
$$
where $\tau$ and $T$ were fixed in \eqref{eq4.3} and \eqref{eq4.4}, respectively. Fixing $\kappa=\frac{\tau}{8 T}$ and $\mu>0$, we define
$$
A_{\mu, R}^\lambda=\left\{u \in \Theta_{2 \kappa}: I_{\lambda, B_R(0) \backslash \Omega_{\Gamma}^{\prime}}(u) \geqslant 0,\|u\|_{\lambda, B_R(0) \backslash \Omega_{\Gamma}}^2 \leq \mu,\left|\mathcal{I}_{\lambda, j}(u)-c_j\right| \leqslant \mu, \forall j \in \Gamma\right\},
$$
where $\Theta_r$ for $r>0$ denotes the set
$$
\Theta_r=\left\{u \in E_{\lambda, R}: \inf _{v \in \Theta}\|u-v\|_{\lambda, \Omega_j^{\prime}} \leqslant r, \forall j \in \Gamma\right\} .
$$
Notice that $w=\sum\limits_{j=1}^l w_j \in A_{\mu, R}^\lambda \cap I_{\lambda, R}^{c_{\Gamma}}$, which shows that $A_{\mu, R}^\lambda \cap I_{\lambda, R}^{c_{\Gamma}} \neq \emptyset$.

Now, we will establish uniform estimate of $\left\|I_{\lambda, R}^{\prime}(u)\right\|$ in the set $\left(A_{2 \mu, R}^\lambda \backslash A_{\mu, R}^\lambda\right) \cap I_{\lambda, R}^{c_{\Gamma}}$.

\begin{lemma}\label{L5.1}
For each $\mu>0$, there exist $\Lambda_*>0, R^*>0$ large enough and $\sigma_0>0$ independent of $\lambda$ and $R>0$ large enough such that
$$
\left\|I_{\lambda, R}^{\prime}(u)\right\| \geq \sigma_0 \quad \text { for } \lambda \geq  \Lambda_*, \quad R \geq  R^* \quad \text { and } \quad u \in\left(A_{2 \mu, R}^\lambda \backslash A_{\mu, R}^\lambda\right) \cap I_{\lambda, R}^{c_{\Gamma}} .
$$
\end{lemma}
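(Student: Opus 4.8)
The plan is to argue by contradiction, converting a failure of the lower bound into a Palais--Smale sequence of exactly the type handled by Lemma \ref{L3.6}, and then to extract a contradiction from the defining constraints of $A^\lambda_{\mu,R}$. Suppose the claim fails for some fixed $\mu>0$. Choosing $\Lambda_*=R^*=n$ and $\sigma_0=1/n$ in the negation, I obtain sequences $\lambda_n\to\infty$, $R_n\to\infty$ and $u_n\in\left(A^{\lambda_n}_{2\mu,R_n}\setminus A^{\lambda_n}_{\mu,R_n}\right)\cap I_{\lambda_n,R_n}^{c_\Gamma}$ with $\|I_{\lambda_n,R_n}'(u_n)\|\to0$. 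Since $u_n\in I_{\lambda_n,R_n}^{c_\Gamma}$, the values $I_{\lambda_n,R_n}(u_n)$ are bounded above by $c_\Gamma$; running the boundedness computation of Lemma \ref{L3.2}, whose constants are uniform in $\lambda$ and $R$, shows that $(u_n)$ is bounded in $E_{\lambda_n,R_n}$, so after passing to a subsequence $I_{\lambda_n,R_n}(u_n)\to c\le c_\Gamma$. Hence $(u_n)$ is precisely a $(PS)_{\infty,R_n}$ sequence with $R_n\to\infty$.

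Next I would apply Lemma \ref{L3.6}: up to a subsequence $u_n\to u$ strongly in $H_A^1(\mathbb{R}^N)$, the limit $u$ vanishes on $\mathbb{R}^N\setminus\Omega_\Gamma$ and solves the limit problem on $\Omega_\Gamma$, and the three conclusions in part (iii) hold. In particular $\lambda_n\int V|u_n|^2\,dx\to0$, $\|u_n\|^2_{\lambda_n,B_{R_n}(0)\setminus\Omega_\Gamma}\to0$, and $\|u_n\|^2_{\lambda_n,\Omega_j'}\to\int_{\Omega_j}\left(|\nabla_A u|^2+|u|^2\right)dx=\|u\|_j^2$ for each $j\in\Gamma$. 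Because $u$ is supported in $\Omega_\Gamma$ and vanishes wherever $V>0$, these facts also yield $\mathcal{I}_{\lambda_n,j}(u_n)\to\mathcal{I}_j(u|_{\Omega_j})$ and, using the disjointness of the components $\Omega_j$, the clean splitting $I_{\lambda_n,R_n}(u_n)\to\sum_{j\in\Gamma}\mathcal{I}_j(u|_{\Omega_j})=c$.

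The key identification is $\mathcal{I}_j(u|_{\Omega_j})=c_j$ for every $j\in\Gamma$. From $u_n\in\Theta_{2\kappa}$ one gets $\|u_n\|_{\lambda_n,\Omega_j'}\ge\frac{\tau}{2T}-2\kappa=\frac{\tau}{4T}$, so $\|u\|_j\ge\frac{\tau}{4T}>0$ and $u|_{\Omega_j}\neq0$; being a nontrivial critical point of $\mathcal{I}_j$, it lies on $\mathcal{N}_j$ and therefore $\mathcal{I}_j(u|_{\Omega_j})\ge c_j$. On the other hand $\sum_{j\in\Gamma}\mathcal{I}_j(u|_{\Omega_j})=c\le c_\Gamma=\sum_{j\in\Gamma}c_j$, and these two inequalities force equality in every summand, i.e. $\mathcal{I}_j(u|_{\Omega_j})=c_j$. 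Consequently $\mathcal{I}_{\lambda_n,j}(u_n)\to c_j$ for all $j\in\Gamma$.

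Finally I would read off the contradiction. By construction $u_n\in A^{\lambda_n}_{2\mu,R_n}$, so $u_n\in\Theta_{2\kappa}$ and $I_{\lambda_n,B_{R_n}(0)\setminus\Omega_\Gamma'}(u_n)\ge0$, and these two conditions are shared with $A^{\lambda_n}_{\mu,R_n}$. Moreover $\|u_n\|^2_{\lambda_n,B_{R_n}(0)\setminus\Omega_\Gamma}\to0\le\mu$ and $|\mathcal{I}_{\lambda_n,j}(u_n)-c_j|\to0\le\mu$ for every $j\in\Gamma$, so for all large $n$ the function $u_n$ satisfies every defining condition of $A^{\lambda_n}_{\mu,R_n}$; that is, $u_n\in A^{\lambda_n}_{\mu,R_n}$, contradicting $u_n\notin A^{\lambda_n}_{\mu,R_n}$. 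I expect the main obstacle to be the compactness and energy-splitting step: justifying the strong convergence $u_n\to u$ and the decomposition $I_{\lambda_n,R_n}(u_n)\to\sum_{j\in\Gamma}\mathcal{I}_j(u|_{\Omega_j})$ uniformly as $\lambda_n,R_n\to\infty$. This is exactly what Lemma \ref{L3.6} supplies, so the heart of the argument is to verify its hypotheses and to combine the variational characterization $c_j=\inf_{\mathcal{N}_j}\mathcal{I}_j$ with the level bound $c\le c_\Gamma$.
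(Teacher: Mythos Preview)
Your argument is correct and follows essentially the same route as the paper: contradiction, reduction to a $(PS)_{\infty,R_n}$ sequence, application of Lemma~\ref{L3.6}, nontriviality of each $u|_{\Omega_j}$ from the $\Theta_{2\kappa}$ constraint, and the forced equality $\mathcal{I}_j(u|_{\Omega_j})=c_j$ from the level bound $c\le c_\Gamma$. If anything, your verification of the defining conditions of $A^{\lambda_n}_{\mu,R_n}$ (in particular $|\mathcal{I}_{\lambda_n,j}(u_n)-c_j|\to0$) is more explicit than the paper's, which states only $|I_{\lambda_n,R_n}(u_n)-c_\Gamma|\le\mu$ at that step.
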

\begin{proof}
Arguing by contradiction, we assume that there exist $\lambda_n, R_n \rightarrow \infty$ and $u_n \in\left(A_{2 \mu, R_n}^{\lambda_n} \backslash A_{\mu, R_n}^{\lambda_n}\right)$ $\cap I_{\lambda_n, R_n}^{c_{\Gamma}}$ such that
$$
\left\|I_{\lambda_n, R_n}^{\prime}\left(u_n\right)\right\| \rightarrow 0 .
$$
Since $u_n \in A_{2 \mu, R_n}^{\lambda_n}$, we know that $(\left\|u_n\right\|_{\lambda_n, R_n})$ and $\left(I_{\lambda_n, R_n}\left(u_n\right)\right)$ are both bounded. Then up to a subsequence if necessary, assume that $\left(I_{\lambda_n, R_n}\left(u_n\right)\right)$ is a convergent sequence. Hence, by Lemma \ref{L3.6}, there exists $u \in H_A^1\left(\Omega_{\Gamma}\right)$ such that $u$ is a solution for \eqref{eq4.1} and
$$
u_n \rightarrow u \quad \text { in } H_A^1(\mathbb{R}^N), \quad\left\|u_n\right\|_{\lambda_n, B_{R_n}(0) \backslash \Omega_{\Gamma}}^2 \rightarrow 0 \quad \text { and } \quad I_{\lambda_n, R_n}\left(u_n\right) \rightarrow \mathcal{I}_{\Gamma}(u) \in\left(-\infty, c_{\Gamma}\right] .
$$
Note that $\left(u_n\right) \subset \Theta_{2 \kappa}$, it holds
$$
\left\|u_n\right\|_{\lambda_n, \Omega_j^{\prime}}^2>\frac{\tau}{4 T}, \quad \forall j \in \Gamma.
$$
Letting $n \rightarrow+\infty$, we obtain that
$$
\|u\|_j^2 \geqslant \frac{\tau}{4 T}>0, \quad \forall j \in \Gamma,
$$
which implies $\left.u\right|_{\Omega_j} \neq 0, j=1, \ldots, l$ and $I_{\Gamma}^{\prime}(u)=0$. As a result, by using \eqref{eq4.3},
$$
\|u\|_j^2>\frac{\tau}{2 T}>0, \quad \forall j \in \Gamma .
$$
In this way, $\mathcal{I}_{\Gamma}(u) \geqslant c_{\Gamma}$. But according to the fact that $I_{\lambda_n, R_n}\left(u_n\right) \leq c_{\Gamma}$ and $I_{\lambda_n, R_n}\left(u_n\right) \rightarrow \mathcal{I}_{\Gamma}(u)$ as $n \rightarrow+\infty$, it follows that $\mathcal{I}_{\Gamma}(u)=c_{\Gamma}$. Therefore, for $n$ sufficiently large,
$$
\left\|u_n\right\|_j^2>\frac{\tau}{2 T}, \quad\left|I_{\lambda_n, R_n}\left(u_n\right)-c_{\Gamma}\right| \leq  \mu \quad \text { for any } j \in \Gamma \text {. }
$$
Thereby, $u_n \in A_{\mu, R_n}^{\lambda_n}$ for large $n$, which is a contradiction to $u_n \in\left(A_{2 \mu, R_n}^{\lambda_n} \backslash A_{\mu, R_n}^{\lambda_n}\right)$. This complete the proof.
\end{proof}
Now, define $\mu_1$ and $\mu^*$ as follows:
$$
\min _{t \in \partial\left[1 / T^2, 1\right]^l}\left|I_{\Gamma}\left(\gamma_0(t)\right)-c_{\Gamma}\right|=\mu_1>0
$$
and
$$
\mu^*=\min \left\{\mu_1, \kappa, r / 2\right\},
$$
where $\kappa=\frac{\tau}{8 T}$ was given before and $r>\max \left\{\left\|w_j\right\|_{H_0^1\left(\Omega_j\right)}: j=1, \ldots, l\right\}$. For each $s>0$, we also define
$$
B_s^\lambda:=\left\{u \in E_\lambda\left(B_R(0)\right):\|u\|_{\lambda, R} \leqslant s\right\} \text { for } s>0 .
$$
\begin{lemma}\label{L5.2}
Assume $\mu \in\left(0, \mu^*\right), \Lambda_*>0$ and $R^*>0$ sufficiently large as given in Lemma \ref{L5.1}. Then there exists a nontrivial solution $u_{\lambda, R}$ of \eqref{eq3.1} such that $u_\lambda \in A_{\mu, R}^\lambda \cap I_{\lambda, R}^{c_{\Gamma}} \cap B_{r+1}^\lambda$ for $\lambda \geqslant \Lambda_*$ and $R \geq  R^*$.
\end{lemma}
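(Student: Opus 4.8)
The plan is to argue by contradiction through a localized quantitative deformation, following the Ding--Tanaka scheme adapted to the penalized functional. Suppose that for some $\lambda\geq\Lambda_*$ and $R\geq R^*$ the functional $I_{\lambda,R}$ has \emph{no} critical point in $A_{\mu,R}^\lambda\cap I_{\lambda,R}^{c_\Gamma}\cap B_{r+1}^\lambda$. The first step is to upgrade Lemma~\ref{L5.1} to a lower bound on a full neighborhood: on the collar $(A_{2\mu,R}^\lambda\setminus A_{\mu,R}^\lambda)\cap I_{\lambda,R}^{c_\Gamma}$ we already have $\|I_{\lambda,R}'(u)\|\geq\sigma_0$ with $\sigma_0$ independent of $\lambda,R$, while on the core $A_{\mu,R}^\lambda\cap I_{\lambda,R}^{c_\Gamma}$ the absence of critical points together with the Palais--Smale condition (Lemma~\ref{L3.3}) forbids sequences with $\|I_{\lambda,R}'(u_n)\|\to0$: any such sequence would, after passing to a subsequence, converge to a critical point which, by the closedness of the constraints defining the set, would lie in the forbidden region. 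Hence there is $\sigma>0$ and a fixed small $\bar\mu\in(0,\min\{\mu,\mu_1\})$ with $\|I_{\lambda,R}'(u)\|\geq\sigma$ on $A_{2\mu,R}^\lambda\cap I_{\lambda,R}^{-1}([c_\Gamma-\bar\mu,c_\Gamma])$.

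Using this bound I would construct a deformation. Take a locally Lipschitz pseudo-gradient field $W$ for $I_{\lambda,R}$ and Lipschitz cut-offs $\psi$ (equal to $1$ on $A_{\mu,R}^\lambda$ and vanishing off $A_{2\mu,R}^\lambda$) and $\chi$ (equal to $1$ on $I_{\lambda,R}^{-1}([c_\Gamma-\bar\mu/2,c_\Gamma])$ and vanishing below $c_\Gamma-\bar\mu$), and integrate $\dot\eta=-\psi(\eta)\chi(\eta)W(\eta)/\|W(\eta)\|$. Where the cut-offs are active the energy strictly decreases, and $\eta$ fixes every $u$ with $I_{\lambda,R}(u)\leq c_\Gamma-\bar\mu$ or $u\notin A_{2\mu,R}^\lambda$. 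The crucial quantitative point is a \emph{no-escape} estimate: since the quantities $\|\cdot\|_{\lambda,B_R(0)\setminus\Omega_\Gamma}^2$ and $\mathcal{I}_{\lambda,j}(\cdot)$ cutting out the collar are Lipschitz along the unit-speed flow with constants uniform on the bounded set $B_{r+1}^\lambda$, a trajectory starting in $A_{\mu,R}^\lambda$ cannot reach $\partial A_{2\mu,R}^\lambda$ (which requires changing one of these quantities by $\mu$) before its energy has dropped below $c_\Gamma-\bar\mu$; here the uniform collar bound $\sigma_0$ of Lemma~\ref{L5.1} provides the barrier, and choosing $\bar\mu$ small compared with $\sigma_0\mu$ guarantees the estimate. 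Consequently $\eta(1,\cdot)$ maps $A_{\mu,R}^\lambda\cap I_{\lambda,R}^{c_\Gamma}$ into $I_{\lambda,R}^{c_\Gamma-\bar\mu}$.

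I would then test $\eta(1,\cdot)$ against the comparison map $\gamma_0$. Because $\omega_1,\dots,\omega_l$ have pairwise disjoint supports contained in $\Omega_\Gamma$, where $V\equiv0$, one has $I_{\lambda,R}(\gamma_0(s))=\sum_{j=1}^l\mathcal{I}_j(s_jT\omega_j)$, each slice $s_j\mapsto\mathcal{I}_j(s_jT\omega_j)$ being a bump with maximum $c_j$; thus $I_{\lambda,R}(\gamma_0(s))\leq c_\Gamma$. This yields the dichotomy: if $\gamma_0(s)\notin A_{\mu,R}^\lambda$ then some slice obeys $\mathcal{I}_j(s_jT\omega_j)<c_j-\mu$ (the $\Theta_{2\kappa}$ condition can only fail when a slice has small norm, hence small energy), so $I_{\lambda,R}(\gamma_0(s))\leq c_\Gamma-\mu$; and if $\gamma_0(s)\in A_{\mu,R}^\lambda$ then the previous step gives $I_{\lambda,R}(\eta(1,\gamma_0(s)))\leq c_\Gamma-\bar\mu$. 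On $\partial([1/T^2,1]^l)$ we have $I_{\lambda,R}(\gamma_0(s))\leq c_\Gamma-\mu_1<c_\Gamma-\bar\mu$ by the definition of $\mu_1$ and Lemma~\ref{L4.2}(b), so $\eta(1,\cdot)$ fixes $\gamma_0$ there and $\gamma:=\eta(1,\gamma_0)\in\Gamma_*$. Since the energy never increases along $\eta$, the dichotomy forces $\max_{s}I_{\lambda,R}(\gamma(s))\leq c_\Gamma-\bar\mu$. But Lemma~\ref{L4.3}(b) allows us to enlarge $\Lambda_*$ so that $b_{\lambda,R,\Gamma}>c_\Gamma-\bar\mu$ for $\lambda\geq\Lambda_*$, whence $\max_sI_{\lambda,R}(\gamma(s))<b_{\lambda,R,\Gamma}$, contradicting $b_{\lambda,R,\Gamma}=\inf_{\gamma\in\Gamma_*}\max I_{\lambda,R}(\gamma)$. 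This contradiction produces the desired critical point $u_{\lambda,R}\in A_{\mu,R}^\lambda\cap I_{\lambda,R}^{c_\Gamma}$; it is nontrivial since $A_{\mu,R}^\lambda\subset\Theta_{2\kappa}$ forces $\|u_{\lambda,R}\|_{\lambda,\Omega_j'}>0$, and the uniform a priori bound of Lemma~\ref{L3.2} (and Lemma~\ref{L3.7}), together with the choice of $r$, places it in $B_{r+1}^\lambda$.

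The main obstacle is precisely the no-escape estimate of the second paragraph: the deformation must act only inside the cylinder $A_{2\mu,R}^\lambda\cap I_{\lambda,R}^{-1}([c_\Gamma-\bar\mu,c_\Gamma])$ where the gradient is controlled, yet it must still lower the energy of the near-maximal interior points of $\gamma_0$ below $b_{\lambda,R,\Gamma}$. Reconciling the parameters $\kappa,\mu,\bar\mu,\sigma$ so that no trajectory can leave the cylinder before it has descended the required amount — while keeping the relevant constants independent of $\lambda$ and $R$ so that the single choice $b_{\lambda,R,\Gamma}>c_\Gamma-\bar\mu$ closes the argument — is the delicate part of the proof.
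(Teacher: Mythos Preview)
Your proposal is correct and follows essentially the same route as the paper's proof: argue by contradiction, combine the Palais--Smale condition on the core $A_{\mu,R}^\lambda$ with the uniform collar bound $\sigma_0$ from Lemma~\ref{L5.1}, build a localized deformation that fixes $\gamma_0$ on $\partial([1/T^2,1]^l)$, and push the maximum of the deformed path strictly below $c_\Gamma$ to contradict $b_{\lambda,R,\Gamma}\to c_\Gamma$ (Lemma~\ref{L4.3}(b)). The only differences are bookkeeping: the paper uses a single set--based cutoff $\Psi$ and an explicit ($\lambda$-dependent) terminal time $T_*=\sigma_0\mu/(2K_*\tilde d_\lambda)$ together with a three-case exit analysis, whereas you add an energy cutoff $\chi$ and phrase the ``no-escape'' step via a Lipschitz barrier; both devices achieve the same $\lambda$-independent energy drop $\sim\sigma_0\mu$.
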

\begin{proof}
Assume that there exist no critical points for the functional $I_{\lambda, R}(u)$ in $A_{\mu, R}^\lambda \cap I_{\lambda, R}^{c_{\Gamma}} \cap B_{r+1}^\lambda$ for $\lambda \geq  \Lambda_*$, our goal is to look for a contradiction. Since $I_{\lambda, R}$ satisfies the (PS) condition, there exists a constant $d_\lambda>0$ satisfying
$$
\left\|I_{\lambda, R}^{\prime}(u)\right\| \geq  d_\lambda \quad \text { for all } u \in A_{\mu, R}^\lambda \cap I_{\lambda, R}^{c_{\Gamma}} \cap B_{r+1}^\lambda.
$$
By using Lemma \ref{L5.1},
$$
\left\|I_{\lambda, R}^{\prime}(u)\right\| \geq  \sigma_0 \quad \text { for all } u \in\left(A_{2 \mu, R}^\lambda \backslash A_{\mu, R}^\lambda\right) \cap I_{\lambda, R}^{c_{\Gamma}},
$$
where $\sigma_0>0$ is independent of $\lambda$. Now, define $\Psi: E_{\lambda, R} \rightarrow \mathbb{R}$ is a continuous functional such that
$$
\begin{aligned}
& \Psi(u)=1 \quad \text { for } u \in A_{3 \mu / 2, R}^\lambda \cap \Upsilon_\kappa \cap B_r^\lambda, \\
& \Psi(u)=0 \quad \text { for } u \notin A_{2 \mu, R}^\lambda \cap \Upsilon_{2 \kappa} \cap B_{r+1}^\lambda, \\
& 0 \leqslant \Psi(u) \leqslant 1, \quad \forall u \in E_{\lambda, R},
\end{aligned}
$$
and $H: I_{\lambda, R}^{c_{\Gamma}} \rightarrow E_\lambda\left(B_R(0)\right)$ is a function given by
$$
H(u):= \begin{cases}-\Psi(u)\frac{Y(u)}{\|Y(u)\|}, &u\in A_{2\mu,R}^{\lambda}\cap B_{r+1}^{\lambda}, \\ 0, & u\notin A_{2\mu,R}^{\lambda}\cap B_{r+1}^{\lambda},\end{cases}
$$
where $Y$ is a pseudo-gradient vector field for $I_{\lambda, R}$ on $\mathcal{K}=\left\{u \in E_{\lambda, R}:I_{\lambda, R}^{\prime}(u) \neq 0\right\}$. It is obvious that $H$ is well defined since $I_{\lambda, R}^{\prime}(u) \neq 0$ for $u \in A_{2 \mu, R}^\lambda \cap \Phi_{\lambda, R}^{c_{\Gamma}}$. Note that
$$
\|H(u)\| \leqslant 1, \quad \forall \lambda \geqslant \Lambda_* \quad \text { and } \quad u \in \Phi_{\lambda, R}^{c_{\Gamma}},
$$
so
\begin{equation}\label{eq5.1}
 \frac{d}{d t} I_{\lambda, R}(\eta(t, u)) \leq -\Psi(\eta(t, u))\left\|I_{\lambda, R}^{\prime}(\eta(t, u))\right\| \leq  0,
\end{equation}
\begin{equation*}
  \left\|\frac{d \eta}{d t}\right\|_\lambda=\|H(\eta)\|_\lambda \leq  1,
\end{equation*}
\begin{equation}\label{eq5.2}
  \eta(t, u)=u \quad \text { for all } t \geqslant 0 \quad \text { and } \quad u \in I_{\lambda, R}^{c_{\Gamma}} \backslash\left(A_{2 \mu, R}^\lambda \cap B_{r+1}^\lambda\right),
\end{equation}
where the deformation flow $\eta:[0, \infty) \times I_{\lambda, R}^{c_{\Gamma}} \rightarrow I_{\lambda, R}^{c_{\Gamma}}$ defined by
$$
\frac{d \eta}{d t}=H(\eta) \quad \text { and } \quad \eta(0, u)=u \in I_{\lambda, R}^{c_{\Gamma}}.
$$
Next, we consider two paths:

(1) The path $t \rightarrow \eta\left(t, \gamma_0(t)\right)$, where $t=\left(t_1, \ldots, t_l\right) \in\left[1 / T^2, 1\right]^l$.

If $\mu \in\left(0, \mu^*\right)$, we have
$$
\gamma_0(t) \notin A_{2 \mu, R}^\lambda, \quad \forall t \in \partial\left(\left[1 / T^2, 1\right]^l\right) .
$$
As $I_{\lambda, R}\left(\gamma_0(t)\right) \leq  c_{\Gamma}$ for any $t \in \partial\left(\left[1 / T^2, 1\right]^l\right)$, by using \eqref{eq5.2}, we have
$$
\eta\left(t, \gamma_0(t)\right)=\gamma_0(t), \quad \forall t \in \partial\left(\left[1 / T^2, 1\right]^l\right)
$$
Hence, $\eta\left(t, \gamma_0(t)\right) \in \Gamma_*$ for all $t \geq  0$.

(2) The path $t \rightarrow \gamma_0(t)$, where $t=\left(t_1, \ldots, t_l\right) \in\left[1 / T^2, 1\right]^l$.
In view of $\operatorname{supp} (t)  \subset \overline{\Omega_{\Gamma}}$ for all $t \in\left[1 / T^2, 1\right]^l, I_{\lambda, R}\left(\gamma_0(t)\right)$ does not depend on $\lambda>0$. Note that,
$$
I_{\lambda, R}\left(\gamma_0(t)\right) \leq  c_{\Gamma}, \quad \forall t \in\left[1 / T^2, 1\right]^l
$$
and
$$
I_{\lambda, R}\left(\gamma_0(t)\right)=c_{\Gamma} \Leftrightarrow t_j=1 / T, \quad \forall j \in \Gamma,
$$
so
$$
m_0:=\sup \left\{I_{\lambda, R}(u): u \in \gamma_0\left(\left[1 / T^2, 1\right]^l\right) \backslash A_\mu^\lambda\right\}
$$
is independent of $\lambda, R>0$ and $m_0<c_{\Gamma}$. Now, observing that there exists a $K_*>0$ satisfying
$$
\left|I_{\lambda, R}(u)-I_{\lambda, R}(v)\right| \leq  K_*\|u-v\|_{\lambda, R}, \quad \forall u, v \in B_r^\lambda.
$$
Next, we will prove that if $T_*>0$ is large enough, the estimate below holds:
\begin{equation}\label{eq5.3}
  \max _{t \in\left[1 / T^2, 1\right]^l} I_\lambda\left(\eta\left(T_*, \gamma_0(t)\right)\right)<\max \left\{m_0, c_{\Gamma}-\frac{1}{2 K_*} \sigma_0 \mu\right\}.
\end{equation}
Indeed, write $u=\gamma_0(t), t \in\left[1 / T^2, 1\right]^l$. If $u \notin A_{\mu, R}^\lambda$, using \eqref{eq5.2}, we must have that
$$
I_{\lambda, R}(\eta(t, u)) \leq  I_\lambda(\eta(0, u))=I_{\lambda, R}(u) \leqslant m_0, \quad \forall t \geq 0 .
$$
If $u \in A_{\mu, R}^\lambda$, let $\widetilde{\eta}(t)=\eta(t, u), \widetilde{d}_\lambda:=\min \left\{d_\lambda, \sigma_0\right\}$ and $T_*=\frac{\sigma_0 \mu}{2 K_* \widetilde{d}_\lambda}>0$, now we distinguish two cases:

(1) $\widetilde{\eta}(t) \in A_{3 \mu / 2, R}^\lambda \cap \Theta_\kappa \cap B_r^\lambda, \forall t \in\left[0, T_*\right]$.

(2) $\widetilde{\eta}\left(t_0\right) \notin A_{3 \mu / 2, R}^\lambda \cap \Theta_\kappa \cap B_r^\lambda$ for some $t_0 \in\left[0, T_*\right]$.

Assume (1) holds, it follows that $\Psi(\widetilde{\eta}(t)) \equiv 1$ and $\left\|I_{\lambda, R}^{\prime}(\widetilde{\eta}(t))\right\| \geq  \widetilde{d}_\lambda$ for all $t \in\left[0, T_*\right]$. Using \eqref{eq5.1}, we have
 \begin{eqnarray*}
I_{\lambda, R}\left(\widetilde{\eta}\left(T_*\right)\right)&=&I_{\lambda, R}(u)+\int_0^{T_*} \frac{d}{d s} I_{\lambda, R}(\widetilde{\eta}(s)) d s\\
& \leq& c_{\Gamma}-\int_0^{T_*} \widetilde{d}_\lambda d s \\
& =&c_{\Gamma}-\widetilde{d}_\lambda T_* \\
& \leq& c_{\Gamma}-\frac{\sigma_0 \mu}{2 K_*} .
 \end{eqnarray*}

If (2) holds, There are three situations.

(i) There exists $t_2 \in\left[0, T_*\right]$ satisfying $\widetilde{\eta}\left(t_2\right) \notin \Theta_\kappa$, and thus for $t_1=0$, it holds
$$
\left\|\widetilde{\eta}\left(t_2\right)-\widetilde{\eta}\left(t_1\right)\right\|_{\lambda, R} \geqslant \delta>\mu
$$
since $\widetilde{\eta}\left(t_1\right)=u \in \Theta$.

(ii) There exists $t_2 \in\left[0, T_*\right]$ such that $\widetilde{\eta}\left(t_2\right) \notin B_r^\lambda$, so that for $t_1=0$, we obtain
$$
\left\|\widetilde{\eta}\left(t_2\right)-\widetilde{\eta}\left(t_1\right)\right\|_{\lambda, R} \geq r>\mu,
$$
because $\widetilde{\eta}\left(t_1\right)=u \in B_r^\lambda$.

(iii) $\widetilde{\eta}(t) \notin \Theta_\kappa \cap B_r^\lambda$, and there exist $t_1$ and $t_2$ satisfying $0 \leq  t_1<t_2 \leq  T_*$ such that $\widetilde{\eta}(t) \in A_{3 \mu / 2, R}^\lambda \backslash A_{\mu, R}^\lambda$ for all $t \in\left[t_1, t_2\right]$ with
$$
\left|I_{\lambda, R}\left(\widetilde{\eta}\left(t_1\right)\right)-c_{\Gamma}\right|=\mu \quad \text { and } \quad\left|I_{\lambda, R}\left(\widetilde{\eta}\left(t_2\right)\right)-c_{\Gamma}\right|=\frac{3 \mu}{2}.
$$
According to the definition of $K_*$,
$$
\begin{aligned}
\left\|\widetilde{\eta}\left(t_2\right)-\widetilde{\eta}(t)\right\|_{\lambda, R} & \geq  \frac{1}{K_*}\left|I_{\lambda, R}\left(\widetilde{\eta}\left(t_2\right)\right)-I_{\lambda, R}\left(\widetilde{\eta}\left(t_1\right)\right)\right| \\
& \geq  \frac{1}{K_*}\left(\left|I_{\lambda, R}\left(\widetilde{\eta}\left(t_2\right)\right)-c_{j_0}\right|-\left|I_{\lambda, R}\left(\widetilde{\eta}\left(t_1\right)\right)-c_{j_0}\right|\right) \\
& \geq  \frac{1}{2 K_*} \mu .
\end{aligned}
$$
Using the mean value theorem and $t_2-t_1 \geq  \frac{1}{2 K_*} \mu$, it follows that
$$
\begin{aligned}
I_{\lambda, R}\left(\widetilde{\eta}\left(T_*\right)\right) & =I_{\lambda, R}(u)+\int_0^{T_*} \frac{d}{d s} I_{\lambda, R}(\widetilde{\eta}(s)) d s \\
& \leq  I_{\lambda, R}(u)-\int_0^{T_*} \Psi(\widetilde{\eta}(s))\left\|I_{\lambda, R}^{\prime}(\widetilde{\eta}(s))\right\| d s \\
& \leq  c_{\Gamma}-\int_{t_1}^{t_2} \sigma_0 d s \\
& =c_{\Gamma}-\sigma_0\left(t_2-t_1\right) \\
& \leq c_{\Gamma}-\frac{\sigma_0 \mu}{2 K_*}
\end{aligned}
$$
which implies \eqref{eq5.3} holds.

Fixing $\widehat{\eta}(t)=\eta\left(T_*, \gamma_0(t)\right)$, it holds $\widehat{\eta}(t) \in \Theta_{2 \kappa}$, so $\left.\widehat{\eta}(t)\right|_{\Omega_j^{\prime}} \neq 0$ for all $j \in \Gamma$. Hence, $\widehat{\eta} \in \Gamma_*$ and
$$
b_{\lambda, R, \Gamma} \leq  \max _{s \in\left[1 / T^2, 1\right]^l} I_{\lambda, R}(\widehat{\eta}(s)) \leq \max \left\{m_0, c_{\Gamma}-\frac{\sigma_0 \mu}{2 K_*}\right\}<c_{\Gamma}.
$$
But by using Lemma \ref{L4.3}, $b_{\lambda, R, \Gamma} \rightarrow c_{\Gamma}$ as $\lambda \rightarrow \infty$ uniformly holds for $R>0$ large enough, this is obviously contradictory.
Therefore, the conclusion of lemma holds.
\end{proof}
\section{The proof of Theorem \ref{t1.1}}
Using Lemma \ref{L5.2}, for $\mu \in\left(0, \mu^*\right)$ and $\Lambda_*>0$, we can find a nontrivial solution $u_{\lambda, R}$ for the problem \eqref{eq3.1} such that $u_{\lambda, R} \in A_{\mu, R}^\lambda \cap I_{\lambda, R}^{c_{\Gamma}} \cap B_{r+1}^\lambda$ for all $\lambda \geq  \Lambda_*$ and $R \geq R^*$.

Now fix $\lambda \geq  \Lambda_*$ and let $R_n \rightarrow+\infty$, there exists a solution $u_{\lambda, n}=u_{\lambda, R_n}$ for \eqref{eq3.1} with
$$
u_{\lambda, n} \in A_{\mu, R_n}^\lambda \cap I_{\lambda, R_n}^{c_{\Gamma}} \cap B_{r+1}^\lambda, \quad \forall n \in \mathbb{N} .
$$
Since $(u_{\lambda, n})$ is bounded in $H_A^1(\mathbb{R}^N)$, we can assume that for some $u_\lambda \in H_A^1(\mathbb{R}^N)$,
$$
\begin{aligned}
& I_{\lambda, R_n}(u_{\lambda, n}) \rightarrow d \leq  c_{\Gamma}, \\
& u_{\lambda, n} \rightarrow u_\lambda \quad \text { in } H_A^1(\mathbb{R}^N), \\
& u_{\lambda, n} \rightarrow u_\lambda \quad \text { in } L_{\text {loc }}^q(\mathbb{R}^N) \quad \text { for any } q \in\left[1,2^*\right)
\end{aligned}
$$
and
$$
u_{\lambda, n}(x) \rightarrow u_\lambda(x) \quad \text { a.e. } x \in \mathbb{R}^N .
$$
Recalling Lemma \ref{L3.8}, we have
$$
0 \leq  |u_{\lambda, n}(x)| \leq  a_0, \quad \forall x \in \mathbb{R}^N \backslash \Omega_{\Gamma},
$$
so
$$
0 \leq  |u_\lambda(x)| \leq  a_0, \quad \forall x \in \mathbb{R}^N \backslash \Omega_{\Gamma} .
$$
Next, we will introduce two lemmas. Their proofs follow from the similar arguments in the proof of Lemma \ref{L3.6}, so we omit them.
\begin{lemma}\label{L6.1}
For any fixed $\xi>0$, there exists an $R>0$ satisfying
$$
\limsup\limits_{n \rightarrow \infty} \int_{\mathbb{R}^N \backslash B_R(0)}\left(\left|\nabla_A u_{\lambda, n}\right|^2+(\lambda V(x)+1)\left|u_{\lambda, n}\right|^2\right) d x \leq  \xi .
$$
\end{lemma}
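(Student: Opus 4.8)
The plan is to reproduce the localized testing argument used in the proof of Lemma \ref{L3.6}, the only change being that here $\lambda$ is held fixed while $R_n \to +\infty$. First I would fix a cutoff function $\phi_R \in C^{\infty}(\mathbb{R}^N, \mathbb{R})$ with $\phi_R = 0$ on $B_{R/2}(0)$, $\phi_R = 1$ on $B_R^c(0)$, $0 \leq \phi_R \leq 1$ and $|\nabla \phi_R| \leq C/R$, and take $R$ large enough that $\overline{\Omega_{\Gamma}^{\prime}} \subset B_{R/2}(0)$. With this choice the support of $\phi_R$ lies in $\mathbb{R}^N \setminus \Omega_{\Gamma}^{\prime}$, so that $g_2(x, u_{\lambda, n}) = \widetilde{F}_2^{\prime}(u_{\lambda, n})$ there.

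Since each $u_{\lambda, n} = u_{\lambda, R_n}$ is an exact (not merely approximate) solution of \eqref{eq3.1}, and $\phi_R u_{\lambda, n}$ vanishes on $\partial B_{R_n}(0)$ once $R < R_n$, I would test the equation against $\phi_R u_{\lambda, n}$, i.e. use $\langle I_{\lambda, R_n}^{\prime}(u_{\lambda, n}), \phi_R u_{\lambda, n}\rangle = 0$. Expanding $\nabla_A(\phi_R u_{\lambda, n}) = u_{\lambda, n}\nabla \phi_R + \phi_R \nabla_A u_{\lambda, n}$ isolates the principal quantity $\int_{\mathbb{R}^N}(|\nabla_A u_{\lambda, n}|^2 + (\lambda V + 1)|u_{\lambda, n}|^2)\phi_R\, dx$, a nonnegative term $\mathrm{Re}\int_{\mathbb{R}^N} F_1^{\prime}(u_{\lambda, n})\overline{u_{\lambda, n}}\phi_R\, dx \geq 0$ which I discard, the penalized term $\mathrm{Re}\int_{\mathbb{R}^N}\widetilde{F}_2^{\prime}(u_{\lambda, n})\overline{u_{\lambda, n}}\phi_R\, dx$, and the cross term $\mathrm{Re}\int_{\mathbb{R}^N}\overline{u_{\lambda, n}}\,\nabla \phi_R \cdot \nabla_A u_{\lambda, n}\, dx$. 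Invoking $\mathrm{Re}(\widetilde{F}_2^{\prime}(s)\overline{s}) \leq b_0 |s|^2$, valid for all $s$ by the choice of $a_0,b_0$, and estimating the cross term by Cauchy--Schwarz by $\frac{C}{R}\|u_{\lambda, n}\|_{\lambda, R_n}^2$, I obtain
\[
\int_{\mathbb{R}^N}\left(|\nabla_A u_{\lambda, n}|^2 + (\lambda V(x)+1)|u_{\lambda, n}|^2\right)\phi_R\, dx \leq b_0\int_{\mathbb{R}^N}|u_{\lambda, n}|^2\phi_R\, dx + \frac{C}{R}\|u_{\lambda, n}\|_{\lambda, R_n}^2.
\]

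Because $b_0 \approx 0^{+} \leq \tfrac12$ and $|u_{\lambda, n}|^2 \leq (\lambda V(x)+1)|u_{\lambda, n}|^2$, the term $b_0\int |u_{\lambda, n}|^2\phi_R\, dx$ is absorbed into the left-hand side, and then the uniform bound $\|u_{\lambda, n}\|_{\lambda, R_n} \leq r+1$ (valid since $u_{\lambda, n} \in B_{r+1}^\lambda$ for every $n$) turns the right-hand side into a quantity $\leq C^{\prime}/R$ with $C^{\prime}$ independent of $n$. As $\phi_R \equiv 1$ on $\mathbb{R}^N \setminus B_R(0)$ and $\phi_R \geq 0$, the integral over $\mathbb{R}^N \setminus B_R(0)$ is dominated by the left-hand side, so that
\[
\int_{\mathbb{R}^N \setminus B_R(0)}\left(|\nabla_A u_{\lambda, n}|^2 + (\lambda V(x)+1)|u_{\lambda, n}|^2\right) dx \leq \frac{C^{\prime}}{R}
\]
for all $n$ with $R_n > R$; taking $\limsup_{n\to\infty}$ and then choosing $R$ so large that $C^{\prime}/R \leq \xi$ completes the argument.

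The only delicate point is the absorption step, which rests entirely on the penalization having been set up so that $\mathrm{Re}(\widetilde{F}_2^{\prime}(s)\overline{s}) \leq b_0|s|^2$ with $b_0 < 1$ outside $\Omega_{\Gamma}^{\prime}$; this is precisely what lets the nonlinear and logarithmic contributions be swallowed by the quadratic form in the exterior region, where no good sign or growth control of the bare nonlinearity is available. The passage from $\lambda_n \to \infty$ in Lemma \ref{L3.6} to a fixed $\lambda$ here is harmless, since no uniformity in $\lambda$ is needed; the sole role of $R_n \to +\infty$ is to guarantee $B_R(0) \subset B_{R_n}(0)$ for all large $n$, making $\phi_R u_{\lambda, n}$ an admissible test function.
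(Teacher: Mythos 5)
Your proposal is correct and follows essentially the same route as the paper: the paper proves Lemma \ref{L6.1} by invoking the cutoff-function argument from part (i) of the proof of Lemma \ref{L3.6} (test against $\phi_R u_{\lambda,n}$, use $\mathrm{Re}(\widetilde{F}_2^{\prime}(s)\overline{s}) \leq b_0|s|^2$ outside $\Omega_{\Gamma}^{\prime}$ to absorb the nonlinearity, bound the cross term by $C/R$ via the uniform bound on $\|u_{\lambda,n}\|_{\lambda,R_n}$), which is exactly what you do. Your observation that for exact solutions the test identity holds with $0$ in place of $o_n(1)$, and that $R_n \to \infty$ only serves to make $\phi_R u_{\lambda,n}$ admissible, is a faithful (and slightly cleaner) adaptation of that argument to the fixed-$\lambda$ setting.
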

\begin{lemma}\label{L6.2}
$u_{\lambda, n} \rightarrow u_\lambda$ in $H_A^1(\mathbb{R}^N)$. In addition,
$$
F_1\left(u_{\lambda, n}\right) \rightarrow F_1\left(u_\lambda\right) \  \text {and} \ F_1^{\prime}\left(u_{\lambda, n}\right) u_{\lambda, n} \rightarrow F_1^{\prime}\left(u_\lambda\right) u_\lambda \  \text {in}\ L^1(\mathbb{R}^N) .
$$
\end{lemma}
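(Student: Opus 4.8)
The plan is to mirror the proof of Lemma \ref{L3.6}, upgrading the weak convergence of $u_{\lambda,n}$ to strong convergence in $H_A^1(\mathbb{R}^N)$ by the standard ``norm-convergence plus weak lower semicontinuity plus Fatou'' scheme, now in the regime where $\lambda$ is fixed and only $R_n \to +\infty$. First I would identify the weak limit $u_\lambda$ as a weak solution on the whole space. Since each $u_{\lambda,n}$ solves \eqref{eq3.1} on $B_{R_n}(0)$, for any $w \in C_0^\infty(\mathbb{R}^N)$ the relation $\langle I_{\lambda,R_n}'(u_{\lambda,n}), w\rangle = 0$ makes sense once $n$ is large enough that $\mathrm{supp}\, w \subset B_{R_n}(0)$; passing to the limit using the local strong convergence in $L^t_{\mathrm{loc}}$ ($1 \le t < 2^*$) together with the subcritical growth of $g_2$ and $F_1'$ yields
\[
\mathrm{Re}\left(\int_{\mathbb{R}^N}\left(\nabla_A u_\lambda\,\overline{\nabla_A w} + (\lambda V + 1)u_\lambda\,\overline{w}\right)dx + \int_{\mathbb{R}^N} F_1'(u_\lambda)\overline{w}\,dx - \int_{\mathbb{R}^N} g_2(x, u_\lambda)\overline{w}\,dx\right) = 0.
\]
Density of $C_0^\infty(\mathbb{R}^N)$ then permits testing the limiting equation with $u_\lambda$ itself.

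The central step is an energy identity. Testing the equation for $u_{\lambda,n}$ with $u_{\lambda,n}$ gives $\|u_{\lambda,n}\|_\lambda^2 + \mathrm{Re}\int_{\mathbb{R}^N} F_1'(u_{\lambda,n})\overline{u_{\lambda,n}}\,dx = \mathrm{Re}\int_{\mathbb{R}^N} g_2(x, u_{\lambda,n})\overline{u_{\lambda,n}}\,dx$, and the analogous identity holds for $u_\lambda$. I would then show that the right-hand sides converge:
\[
\mathrm{Re}\int_{\mathbb{R}^N} g_2(x, u_{\lambda,n})\overline{u_{\lambda,n}}\,dx \longrightarrow \mathrm{Re}\int_{\mathbb{R}^N} g_2(x, u_\lambda)\overline{u_\lambda}\,dx .
\]
On any fixed ball this follows from the subcritical growth of $g_2$ and the strong $L^t_{\mathrm{loc}}$ convergence; the tail outside a large ball is controlled by the tightness estimate of Lemma \ref{L6.1} together with Lemma \ref{L3.8}, which forces $|u_{\lambda,n}| \le a_0$ (hence $g_2(x,u_{\lambda,n})=\widetilde{F}_2'(u_{\lambda,n})$ with $g_2(x,t)\overline{t}\le b_0|t|^2$) on the region outside $\Omega_\Gamma'$. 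Combining the two identities gives $\|u_{\lambda,n}\|_\lambda^2 + \mathrm{Re}\int F_1'(u_{\lambda,n})\overline{u_{\lambda,n}}\,dx \to \|u_\lambda\|_\lambda^2 + \mathrm{Re}\int F_1'(u_\lambda)\overline{u_\lambda}\,dx$.

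To split this single limit into two, I would use that $\|\cdot\|_\lambda^2$ is weakly lower semicontinuous on the Hilbert space $E_\lambda$ (recall $u_{\lambda,n} \in B_{r+1}^\lambda$, so the sequence is bounded there), and that $F_1'(t)\overline{t} \ge 0$ with $u_{\lambda,n} \to u_\lambda$ a.e.; hence $\liminf_n \|u_{\lambda,n}\|_\lambda^2 \ge \|u_\lambda\|_\lambda^2$ and, by Fatou, $\liminf_n \int F_1'(u_{\lambda,n})\overline{u_{\lambda,n}}\,dx \ge \int F_1'(u_\lambda)\overline{u_\lambda}\,dx$. Since the sum of the two $\liminf$'s cannot exceed the limit of their sum, both inequalities are equalities. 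Equality in the first, combined with $u_{\lambda,n} \rightharpoonup u_\lambda$ in $E_\lambda$, gives $u_{\lambda,n} \to u_\lambda$ strongly in $H_A^1(\mathbb{R}^N)$; equality in the second gives $F_1'(u_{\lambda,n})\overline{u_{\lambda,n}} \to F_1'(u_\lambda)\overline{u_\lambda}$ in $L^1(\mathbb{R}^N)$, since a nonnegative sequence converging a.e. with convergent integrals converges in $L^1$. Finally, as $F_1$ is convex, even and $F_1(0)=0$, one has $0 \le F_1(t) \le F_1'(t)\overline{t}$, so $F_1(u_{\lambda,n})$ is dominated by an $L^1$-convergent sequence, and the generalized dominated convergence theorem yields $F_1(u_{\lambda,n}) \to F_1(u_\lambda)$ in $L^1(\mathbb{R}^N)$.

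The hard part will be the uniform control of the nonlinear term over the unbounded domain, i.e. the tail estimate for $\int g_2(x, u_{\lambda,n})\overline{u_{\lambda,n}}$: this is precisely where Lemma \ref{L6.1} and the $L^\infty$-bound of Lemma \ref{L3.8} are indispensable, since without them the lack of compactness of the Sobolev embedding on $\mathbb{R}^N$ would obstruct passage to the limit. Everything else is a routine adaptation of Lemma \ref{L3.6}.
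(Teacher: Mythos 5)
Your proposal is correct and takes essentially the same route as the paper, which proves Lemma \ref{L6.2} by simply invoking ``the similar arguments in the proof of Lemma \ref{L3.6}'': identification of the weak limit as a solution of the limiting equation, the two energy identities, convergence of the $g_2$-terms via subcritical growth plus the tail estimate, the splitting of the limit of the sum by weak lower semicontinuity and Fatou, and generalized dominated convergence for $F_1$. One minor remark: the bound $g_2(x,t)\overline{t}\leq b_0|t|^2$ outside $\Omega_{\Gamma}^{\prime}$ holds directly because $g_2=\widetilde{F}_2^{\prime}$ there by the definition of $\chi_{\Gamma}$ and $\widetilde{F}_2^{\prime}(t)\overline{t}\leq b_0|t|^2$ by the choice of $a_0,b_0$ and $(f_3)$, so the appeal to Lemma \ref{L3.8} at that point is superfluous (though harmless).
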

Therefore, consider the following energy functional $I_\lambda: E_\lambda \rightarrow(-\infty,+\infty]$,
$$
I_\lambda(u)=\frac{1}{2} \int_{\mathbb{R}^N}\left(|\nabla_A u|^2+(\lambda V(x)+1)|u|^2\right) d x-\frac{1}{2} \int_{\mathbb{R}^N} |u|^2 \log |u|^2 d x
$$
it is easy to see that $u_\lambda$ is a critical point of $I_\lambda$ satisfying
$$
u_\lambda \in A_\mu^\lambda=\left\{u \in\left(\Theta_{\infty}\right)_{2 \kappa}: I_{\lambda, \mathbb{R}^N \backslash \Omega_{\Gamma}^{\prime}}(u) \geq  0,\|u\|_{\mathbb{R}^N \backslash \Omega_{\Gamma}}^2 \leq  \mu,\left|I_{\lambda, j}(u)-c_j\right| \leq  \mu, \forall j \in \Gamma\right\}
$$
where
$$
\Theta_{\infty}=\left\{u \in E_\lambda:\|u\|_{\lambda, \Omega_j^{\prime}}>\frac{\tau}{2 T}, \forall j \in \Gamma\right\}
$$
and
$$
\left(\Theta_{\infty}\right)_r=\left\{u \in E_\lambda: \inf _{v \in \Theta_{\infty}}\|u-v\|_{\lambda, \Omega_j^{\prime}} \leq  r, \forall j \in \Gamma\right\} .
$$

\begin{proof}[\bf Proof of Theorem \ref{t1.1} ] Let $\lambda_n \rightarrow+\infty$ and $\mu_n \in\left(0, \mu^*\right)$ with $\mu_n \rightarrow 0$, we can find a solution $u_n \in A_{\mu_n}^{\lambda_n}$ of the problem \eqref{eq1.1} with $\lambda=\lambda_n$. Hence, $\left(u_n\right)$ is bounded in $H_A^1(\mathbb{R}^N)$ such that

(a) $\left\|I_{\lambda_n}^{\prime}\left(u_{\lambda_n}\right)\right\|=0, \forall n \in \mathbb{N}$;

(b) $\left\|u_{\lambda_n}\right\|_{\lambda_n, \mathbb{R}^N \backslash \Omega_{\Gamma}} \rightarrow 0$;

(c) $I_{\lambda_n}\left(u_n\right) \rightarrow d \leqslant c_{\Gamma}$,\\
where
$$
\left\|I_\lambda^{\prime}(u)\right\|=\sup \left\{\left\langle I_\lambda^{\prime}(u), z\right\rangle: z \in H_A^1\left(\mathbb{R}^N\right) \text { and }\|z\|_\lambda \leq  1\right\}.
$$
According to the proof in Lemma \ref{L3.6}, there exists a $u \in H_A^1(\mathbb{R}^N)$ satisfying $u_{\lambda_n} \rightarrow u$ in $H_A^1(\mathbb{R}^N)$, and $u \equiv 0$ in $\mathbb{R}^N \backslash \Omega_{\Gamma}$ and $u$ is a nontrivial solution of
\begin{equation}\label{eq6.1}
\begin{cases}-(\nabla+iA(x))^2u=|u|^{q-2}u+u \log |u|^2 & \text { in } \Omega_{\Gamma}, \\  u=0 & \text { on } \partial \Omega_{\Gamma},\end{cases}
\end{equation}
so $I_{\Gamma}(u) \geq  c_{\Gamma}$. Moreover, note that $I_{\lambda_n}\left(u_{\lambda_n}\right) \rightarrow \mathcal{I}_{\Gamma}(u)$, then we get $\mathcal{I}_{\Gamma}(u)=d$ and $d \geq  c_{\Gamma}$. due to $d \leq  c_{\Gamma}$, it follows that $\mathcal{I}_{\Gamma}(u)=c_{\Gamma}$, which implies that $u$ is a least energy solution for \eqref{eq6.1}. This completes the proof of the theorem.
\end{proof}

\end{document}